\newcolumntype{C}{>{\Centering\arraybackslash}X}
\newcommand{\N}{\ensuremath{\mathbb{N}}}
\newcommand{\R}{\ensuremath{\mathbb{R}}}
\newcommand{\Z}{\ensuremath{\mathbb{Z}}}
\newcommand{\E}{\ensuremath{\mathbb{E}}}
\renewcommand{\P}{\ensuremath{\mathbb{P}}}
\newcommand{\ind}[1]{\ensuremath{\mathbbm{1}_{\{#1\}}}}
\newcommand{\diff}{\mathop{}\mathopen{}\mathrm{d}}
\newcommand{\cal}[1]{\ensuremath{\mathcal{#1}}}
\newcommand\steq[1]{\stackrel{\text{\rm #1.}}{=}}
\def\eps{\varepsilon}
\def\cadlag{c\`adl\`ag }
\newtheorem{proposition}{Proposition}
\newtheorem{definition}[proposition]{Definition}
\newtheorem{lemma}[proposition]{Lemma}
\newtheorem{theorem}{Theorem}
\newtheorem{corollary}[proposition]{Corollary}
\title[]{A Palm Space Approach\\ to Non-Linear Hawkes Processes}
\date{\today}
\author[Ph. Robert]{Philippe Robert}
\email{Philippe.Robert@inria.fr}
\urladdr{http://www-rocq.inria.fr/who/Philippe.Robert}
\address[Ph. Robert, G. Vignoud]{INRIA Paris, 2 rue Simone Iff, 75589 Paris Cedex 12, France}
\author[G. Vignoud]{Ga\"etan Vignoud}
\email{Gaetan.Vignoud@inria.fr}
\address[G. Vignoud]{ Center for Interdisciplinary Research in Biology (CIRB) - Collège de France (CNRS UMR 7241, INSERM U1050), 11 Place Marcelin Berthelot, 75005 Paris, France}
\thanks{${}^1$Supported by PhD grant of \'Ecole Normale Sup\'erieure, ENS-PSL}
\keywords{Hawkes Processes; Stationary Point Processes; Palm Measure}
\date{\today}
\begin{document}

\begin{abstract}
A Hawkes process on $\R$ is a point process whose intensity function at time $t$ is a functional of its past activity before time $t$. It is defined by its activation function $\Phi$ and its memory function $h$. In this paper, the Hawkes property is expressed as an operator on the sub-space of non-negative sequences associated to distances between its points.  By using the classical correspondence between a  stationary point process and its Palm measure, we establish a  characterization of the corresponding Palm measure as an invariant distribution of a Markovian kernel.  We prove that if $\Phi$ is continuous and its growth rate is at most linear with a rate below some constant, then there exists a stationary Hawkes point process. The classical Lipschitz condition of the literature for an unbounded function $\Phi$  is relaxed. Our proofs rely on a  combination of coupling methods, monotonicity properties of linear Hawkes processes  and classical results on Palm distributions. An investigation of the Hawkes process starting from the null measure, the empty state,  on $\R_-$ plays also an important role. The linear case of Hawkes and Oakes  is revisited at this occasion.

If the memory function $h$ is an  exponential function, under a weak condition it is shown that there exists a unique stationary Hawkes point process. In this case,  its Palm measure is expressed in terms of the invariant distribution of a one-dimensional Harris ergodic Markov chain.  When the activation function is a polynomial $\Phi$ with degree ${>}1$,  there does not exist  a stationary Hawkes process and if the Hawkes process starts from the empty state, a scaling result for the accumulation of its points is  obtained. 

\end{abstract}

\maketitle

 \vspace{-5mm}

\bigskip

\hrule

\vspace{-3mm}

\tableofcontents

\vspace{-1cm}

\hrule

\bigskip

\section{Introduction}
A point process ${\cal N}{=}(t_n:n{\ge}0)$ on $\R_+$ has the Hawkes property if conditionally on  the  points $t_n$, $n{\ge}1$ of 
 ${\cal N}$ up to time $t$, the rate at which a new jump occurs in the time interval $(t,t{+}\diff t)$ is given
by
\[
\Phi\left(\sum_{n: t_n{\le}t} h(t{-}t_n)\right)\diff t= \Phi\left(\int_{-\infty}^t h(t{-}u){\cal N}(\diff u)\right)\diff t,
\]
This is described as a self-excitation property of the dynamic. It can be formulated as the fact that the  process 
\[
\left({\cal N}(0,t){-}\int_0^t  \Phi\left(\int_{-\infty}^s h(s{-}u){\cal N}(\diff u)\right)\diff s \right)
\]
is a local martingale with respect to a convenient filtration.

\subsection*{The Parameters of the Hawkes Process}
\begin{enumerate}
\item The function $h$, the {\em  memory function}.  The quantity $h(t{-}s)$ gives the residual impact at time $t$ of a jump which has occurred at time $s{\le}t$.  It is assumed that it is a non-increasing continuous function, such that the function $(th(t))$ is converging to $0$ at infinity and 
\begin{equation}\label{alpha}
\alpha\steq{def}\int_0^{+\infty} h(u)\diff u <{+}\infty.
\end{equation}
\item The {\em activation function} $\Phi$ modulates the global impact of past jumps. It is assumed that it is continuous and 
\begin{equation}\label{beta}
\beta\steq{def} \limsup_{t\to+\infty} \frac{\Phi(t)}{t}< {+}\infty. 
\end{equation}
\end{enumerate}

This class of models has been used in numerous situations such as, mathematical finance, \citet{bauwens_modelling_2009},
population dynamics, \citet{boumezoued_population_2016}, biology, \citet{reynaudbouret:hal-00863958}, queueing systems,
\citet{daw_queues_2017}, learning theory, \citet{etesami_learning_2016}, or neurosciences, \citet{gerhard_stability_2017}, \ldots

This work is initially motivated by our
investigations of mathematical models of plastic synapticity in neural networks, see~\citet{robert_stochastic_2020}, \cite{robert_stochastic_2020_1} \cite{robert_stochastic_2020_2}.

From a mathematical point of view, these processes have generated, and still generate, a considerable interest. Pioneering works on the existence and uniqueness of  stationary point process, i.e. when the distribution of the point process is invariant with respect to translation, are:
\begin{itemize}
\item \citet{hawkes_cluster_1974}  shows that when the activation function is affine, these processes can be
represented by an age-dependent branching process, a special class of the so-called Crump-Mode-Jagers models. The branching property is an important feature in the analysis of these models. 
\item \citet{kerstan_teilprozesse_1964} gives an existence and uniqueness result of  a stationary point process  in the more general context of a fixed point relation for the distribution of random measures. A contraction argument in a convenient functional setting is the key ingredient. The reference \citet{bremaud_stability_1996} has considerably developed this method in the case of Hawkes processes.

In this setting, the main existence and uniqueness result for an unbounded activation function $\Phi$ and a general memory function $h$ is obtained under the condition that $\Phi$ is Lipschitz with coefficient $L_\Phi$ and that the relation $\alpha L_\Phi{<}1$ holds, where $\alpha$ is defined by Relation~\eqref{alpha}.  Note that $L_{\Phi}{\ge}\beta$, with $\beta$ defined by Relation~\eqref{beta}. We will see in this paper that the weaker conditions that $\Phi$ is continuous and that $\alpha\beta{<}1$ are enough for the existence result.
\end{itemize}
Up to now, these are mainly the two  main approaches to investigate the existence of stationary  Hawkes processes. Renewal properties are also an important tool in the study of Hawkes processes. See~\citet{graham_regenerative_2019, raad_renewal_2019, costa_renewal_2020}.  Appendix~\ref{App-Rev} is an attempt to present  some aspects of the overwhelming literature of this domain in a table. Functional limit theorems like law of large numbers or central limit theorems and  large deviations properties are also topics of interest for these processes. 

In our framework, assuming that a point process is given on $\R_-$, the Hawkes property  on $\R_+$ is expressed with a solution of a Stochastic Differential Equation (SDE) satisfied by its counting process. We reformulate this property  in terms of the distance between the  jump times of the point process.  In the case of a stationary Hawkes point process, this gives a Markovian characterization of  its Palm measure in terms of a positive recurrence property.

The natural state space to study Palm measures is a sub-space of non-negative sequences, of distances between the successive points of the point process. Unfortunately the appropriate state space is not complete as a metric space and, moreover,  the Markov process does not have the Feller property so that the classical tools to prove positive recurrence do not seem to be available. See~\citet{hairer_convergence_2010} for a quick presentation. Nevertheless, our main result, Theorem~\ref{ExistenceTheo} shows that under  appropriate, weak, conditions on $\Phi$ there exists such a Palm measure and therefore a stationary Hawkes process.  It should be noted however that we have not been able to obtain a significant uniqueness result under these weak conditions. Contraction arguments, which are apparently not possible under our weak assumptions,  may be required for that purpose.  Nevertheless, an interesting coupling property, Corollary~\ref{corlc1},  is proved. 

Our approach uses the Markov chain starting from the empty state, i.e. the solution of the SDE when the initial state is the Dirac measure at $0$ (no activity on $\R_-$ except at $0$). Proposition~\ref{TnIneq} shows  that the points of the corresponding point process are lower-bounded by a functional of a simple random walk. A simple characterization of non-homogeneous point processes, see Proposition~\ref{HSDEprop}, plays an important role to derive this result.  The existing literature of Hawkes processes relies more on a stochastic calculus approach, via a formulation in terms of previsible projections of stochastic intensities. 

A second ingredient is a coupling with an linear Hawkes process, the Hawkes process of~\citet{hawkes_cluster_1974} when $\Phi$ is an affine function.   Our analysis of linear Hawkes processes does not explicitly use  the natural branching property of this model. Monotonicity properties and technical estimates give then the desired existence result under weak conditions. See the discussions of Section~\ref{App-Rev} of the appendix and at the beginning of Section~\ref{ExisSec}. 

When the memory function is exponential, an existence result of a stationary Hawkes process  is obtained under an even weaker condition and an explicit representation of the Palm measure is obtained. We also analyze the transient case for which there are few studies in general. In this case, the non-decreasing sequence of points of the Hawkes dynamics blows-up, i.e. converges almost surely to a finite limit. The self-excitation property of the dynamic  leads to an explosion in finite time.  We derive a scaling result, see Theorem~\ref{TransientH},  for the sequence of point in terms of a Poisson process.  

The paper is organized as follows.  Section~\ref{PPSubSec} introduces some basic definitions, Section~\ref{HSDEsec} expresses the Hawkes property in terms of a stochastic differential equation and Section~\ref{MarkSec} gives a characterization of the Palm measure of a stationary Hawkes process as an invariant measure of a Markov chain in the state space of non-negative sequences.  Section~\ref{SecCrump} revisits a classical Hawkes process, when the activation function is affine.   Section~\ref{ExisSec} gives the main existence of this paper with this approach and by using coupling techniques and some monotonicity properties.  Section~\ref{ExpDecSec} investigates Hawkes process with an exponential memory function. Appendix~\ref{AppPP} presents the main definitions and results concerning stationary point processes. Appendix~\ref{App-Rev} gives a quick review of the results and the methods for the existence and uniqueness of stationary Hawkes processes.

\section{Definitions and Notations}\label{PPSubSec}
\subsection{Probability space}

It is assumed that on the  probability space $(\Omega,{\cal F}, \P)$ is defined a Poisson point process ${\cal P}$ on $\R_+{\times}\R$ with intensity measure $\diff x{\otimes}\diff y$. See~\citet{kingman_poisson_1992,last_lectures_2017} and Chapter~1 of~\citet{robert_stochastic_2003}  for a brief account on Poisson processes.

Additionally  $({\cal F}_t)$ is a filtration such that, for $t{\in}\R$,  ${\cal F}_t$ is
the $\sigma$-field generated by the random variables ${\cal P}(A{\times}(s,t])$, where $A$ is a Borelian set of $\R_+$ and
$s{\le}t$. If $\lambda$ and $f$ are non-negative Borelian functions $f$ on $\R$, we define
  \[
  \int_{\R} f(s){\cal P}((0,\lambda(s)],\diff s) \steq{def}
    \int_{\R_+{\times}\R} f(s)\ind{v{\le}\lambda(s)} {\cal P}(\diff v, \diff s),
    \]
    and it is assumed that the $\sigma$-field ${\cal F}$ of the probability space verifies
    \[
    \bigcup_{t{\ge}0} {\cal F}_t \subset {\cal F}. 
    \]
The martingale and stopping time properties are understood with respect to this filtration.

If $H$ is $\R$ or $\R_-$, we denote by $C_c(H)$ the space of continuous functions on $H$ with compact support and
 ${\cal M}_p(H)$, the set of Radon point measures on $H$, that is positive Radon measures carried by points, for
 $m{\in}{\cal M}_p(H)$ then
\[
m{=}\sum_{x{\in}S} \delta_{x},
\]
where $\delta_x$ is the Dirac measure at $x{\in}H$ and $S$ is a countable subset of $H$ with no limiting points in $H$.
We may also  represent $m$ as a sequence $(x_n,n{\in}\N)$ of points. 
If $A$ is a subset of $H$, we denote by 
\[
m(A)=\int_A m(\diff x) =\sum_{x{\in}S}\ind{x{\in}A},
\]
the number of points of $m$ in $A$. A point measure $m$ is {\em simple} if $m(\{x\}){\in}\{0,1\}$, for all $x{\in}H$.
The space ${\cal M}_p(H)$ is endowed with the topology of weak convergence.

\subsection{State space of non-negative sequences}
We refer to Appendix~\ref{AppPP} for general definitions concerning point processes.
We denote by ${\cal S}$ the sub-space of sequences of non-negative real numbers
\begin{equation}\label{eqS}
{\cal S}\steq{def}\left\{x{=}(x_k){\in}(\R_+{\cup}\{{+}\infty\})^{\N\setminus\{0\}}:
x_{k_0}{=}{+}\infty {\Rightarrow} x_{k}{=}{+}\infty,\, \forall k{\ge}k_0 \right\}.
\end{equation}

\subsection*{Correspondence between ${\cal S}$ and ${\cal M}_p(\R_-)$} \ \\
A functional  from ${\cal S}$ to the space of positive measures with a mass at $0$ is introduced as
follows, if $x{=}(x_k){\in}{\cal S}$, the positive measure $m_x$ on $\R_-$ is defined by
\begin{equation}\label{mx}
m_x=\delta_0+\sum_{k=1}^{+\infty} \delta_{t_{k}},\quad \text{ with } t_{k}{=}{-}\!\!\sum_{i=1}^{k}x_i,\, k{\ge}1,
\end{equation}
with the convention that  $\delta_{-\infty}{\equiv}0$.

The measure $m_x$ is a positive measure carried by points associated to $x{\in}{\cal S}$.
Note that $m_x$ is not necessarily a point measure, i.e it may not have Radon property, since we do
not exclude the fact that the sequence $(x_k)$ converges to $0$ sufficiently fast so that the measure $m_x$ may have
a finite limiting point.

With this definition $0$ is always a point of $m_x$ and that the coordinates of $x$ are
the inter-arrivals times of $m_x$, in particular $x_1$ is the distance to the first point of $m_x$ on the left of $0$.
The point measures with a finite number of points correspond to sequences $(x_k)$ which are constant and equal to
${+}\infty$ after some finite index.
In this case, if $k_0$ is the first index where $x_{k_0}{=}+\infty$, with a slight abuse of notation we will write it as a finite
vector $x{=}(x_1,x_2,\ldots, x_{k_0-1},{+}\infty)$ or $x{=}(x_1,x_2,\ldots, x_{k_0-1})$.

On $ {\cal S}$,  the distance, for $x{=}(x_k)$, $y{=}(y_k){\in}{\cal S}$,
\begin{equation}\label{DistS}
d(x,y)=\sum^{+\infty}_{1} \frac{1}{2^k}\,\min(|x_{k}{-}y_{k}|,1),
\end{equation}
with the convention, for $u{\in}\R_+$, $|u{-}\infty|{=}|\infty{-}u|{=}{+}\infty$ and $|\infty{-}\infty|{=}0$.

An important subset of ${\cal S}$ is
\begin{equation}\label{eqS+}
{\cal S}_h\steq{def}\left\{x{=}(x_k){\in}{\cal S}: h(0) + \sum_{k=1}^{+\infty} h\left(\sum_{i=1}^{k}x_{i}\right) {=}
\int_0^{+\infty}h({-}u)\,{m}_x(\diff u) {<}{+}\infty\right\},
\end{equation}
we have $x{\in}{\cal S}_h$ if and only if  $(h({-}u)){\in}L_1({m}_x)$.
Throughout the paper, we will use the following convention, for $t{\ge}0$ and $x{\in}{\cal S}_h$,
\begin{equation}\label{Conv}
\sum_{k=0}^{+\infty} h\left(t{+}\sum_{i=1}^{k}x_{i}\right)= h(t) + \sum_{k=1}^{+\infty} h\left(t{+}\sum_{i=1}^{k}x_{i}\right).
\end{equation}
\begin{definition}\label{Tdef}
 If $\Phi(0){>}0$,  for $a{\ge}0$ and $x{\in}{\cal S}_h$, we set 
\begin{align}
{\cal T}(x,a)&\steq{def}\inf_{t{\ge}0}\left\{\int_{0}^{t}\Phi\left(h(s) + \sum_{k{\ge}1}
          h\left(s{+}\sum_{i=1}^{k}x_i\right)\right)\,\diff s\ge a\right\}\label{Tau}\\
  &=  \inf_
  {t{\ge}0}\left\{\int_{0}^{t}\Phi\left(\int_{(\infty,s)} h\left(s{-}u\right)\,m_x(\diff u)\right)\diff s\ge a\right\},\notag
\end{align}
 and ${\cal T}(x,a){\steq{def}}0$ otherwise, i.e. if $x{\in}{\cal S}{\setminus}{\cal S}_h$, with the convention $h({+}\infty){=}0$.
\end{definition}

\begin{lemma}\label{Lem1}
  If $\Phi(0){>}0$ and $x{\in}{\cal S}_h$   then,
  \[
  \lim_{t\to +\infty} \int_{0}^{t}\Phi\left(\int_{({-}\infty,s)} h\left(s{-}u\right)\,m_x(\diff u)\right)\diff s={+}\infty.
  \]
In particular,  for $x{\in}{\cal S}_h$ and $a{\ge}0$, the variable ${\cal T}(x,a)$ is finite. 
\end{lemma}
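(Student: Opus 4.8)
The plan is to reduce everything to the elementary observation that the integrand
\[
s\longmapsto \Phi\bigl(g(s)\bigr),\qquad g(s)\steq{def}\int_{(-\infty,s)} h(s{-}u)\,m_x(\diff u)= h(s)+\sum_{k\ge1}h\Bigl(s+\sum_{i=1}^{k}x_i\Bigr),
\]
is bounded below by a strictly positive constant for all large $s$, so that $\int_0^t\Phi(g(s))\,\diff s$ grows at least linearly in $t$. To run this argument I need two facts about $g$: that it stays bounded, and that it vanishes at infinity.

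First I would note that, since $h$ is non-increasing and $s\ge0$, one has $h(s)\le h(0)$ and $h\bigl(s+\sum_{i=1}^{k}x_i\bigr)\le h\bigl(\sum_{i=1}^{k}x_i\bigr)$, hence
\[
0\le g(s)\le g(0)=\int_0^{+\infty}h({-}u)\,m_x(\diff u)<+\infty\qquad\text{for all }s\ge0,
\]
the finiteness being exactly the hypothesis $x\in{\cal S}_h$; in particular $g$ is bounded on $\R_+$ by $g(0)$. Next, since $h$ is non-increasing with $(th(t))$ vanishing at infinity, one has $h(t)\to0$ as $t\to+\infty$; together with the summable domination $h\bigl(s+\sum_{i=1}^{k}x_i\bigr)\le h\bigl(\sum_{i=1}^{k}x_i\bigr)$ (whose sum over $k$ is finite by $x\in{\cal S}_h$), dominated convergence for series gives $g(s)\to0$ as $s\to+\infty$, with the convention $h({+}\infty){=}0$ taking care of indices where $\sum_{i=1}^k x_i={+}\infty$.

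Then I would invoke the assumption $\Phi(0){>}0$: by continuity of $\Phi$ there are $\delta{>}0$ and $\eta{>}0$ with $\Phi(v)\ge\eta$ for every $v\in[0,\delta]$. Choosing $S_0$ with $g(s)\le\delta$ for $s\ge S_0$ gives $\Phi(g(s))\ge\eta$ there, so that for $t\ge S_0$,
\[
\int_0^t\Phi\bigl(g(s)\bigr)\,\diff s\ge\int_{S_0}^t\Phi\bigl(g(s)\bigr)\,\diff s\ge\eta\,(t{-}S_0),
\]
the contribution of $[0,S_0]$ being a finite quantity because $g$ is bounded on $\R_+$ and $\Phi$ is continuous hence bounded on $[0,g(0)]$; letting $t\to+\infty$ yields the limit $+\infty$. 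For the last assertion, the map $t\mapsto\int_0^t\Phi(g(s))\,\diff s$ is continuous and tends to $+\infty$, so for every $a\ge0$ it eventually exceeds $a$, i.e. the infimum defining ${\cal T}(x,a)$ is over a non-empty set and is finite.

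The computation is short and I do not anticipate a real obstacle; the only point deserving a line of care is the convergence $g(s)\to0$ for a measure $m_x$ that need not be Radon (it may have a finite accumulation point), but working with the series representation of $g$ and the ${\cal S}_h$-summability dominates this uniformly.
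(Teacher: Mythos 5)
Your proof is correct and follows essentially the same route as the paper: both arguments show that the intensity $g(s)=\int h(s-u)\,m_x(\diff u)$ tends to $0$ as $s\to+\infty$ by dominated convergence (using the $\mathcal{S}_h$-summable domination $h(s-u)\le h(-u)$), and then use the continuity of $\Phi$ together with $\Phi(0)>0$ to bound $\Phi(g(s))$ below by a positive constant for large $s$, forcing linear growth of the integral. Your write-up merely makes explicit the final step that the paper compresses into one sentence.
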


\begin{proof}
Note that, for any $t{\ge}0$,  the monotonicity property of $h$ gives $h(t{-}u){\le}h(-u)$ and, since $x{\in}{\cal S}_h$,
  \[
\int_{(-\infty,0)}h({-}u)m_x(\diff u) <{+}\infty,
  \]
with Lebesgue's dominated convergence Theorem  we obtain the identity
\begin{equation}\label{eqH1}
\lim_{t\to+\infty} \int_{(-\infty,0)}h(t{-}u)m_x(\diff x)=0.
\end{equation}
Our lemma is proved since $\Phi$ is continuous with $\Phi(0){>}0$.
\end{proof}

\section{Hawkes SDEs}
\label{HSDEsec}

We first recall the classical definition of an Hawkes process, see~\citet{hawkes_cluster_1974}.
\begin{definition}
A {\em Hawkes process } is a point process ${\cal N}$ on $\R$ such that, for any $s{\in}\R$, the process 
\[
\left({\cal N}((s,t]){-}\int_s^t \Phi\left(\int_{(-\infty,u)} h(u{-}x){\cal N}(\diff x)\right)\diff u, t{\ge}s\right)
  \]
  is a local martingale with respect to the filtration $({\cal F}_t,t{\ge}s)$, where, for $t{\in}\R$,  ${\cal F}_t$ is
  the $\sigma$-field containing   the $\sigma$-field associated to the random variables ${\cal N}((u,v])$, $u{\le}v{\le}t$.
\end{definition}
If ${\cal N}$ is a Hawkes process, for any $s{\in}\R$  the dual predictable projection of the process of
$({\cal N}((s,t]), t{\ge}s)$ is almost surely,
  \[
\left(\int_s^t \Phi\left(\int_{(-\infty,u)} h(u{-}x){\cal N}(\diff x)\right)\diff u, t{\ge}s\right),
\]
see Theorem~VI~(21.7) of~\citet{rogers_diffusions_2000} for example.
In the terminology of random measures,  see~\citet{Jacod}, the {\em stochastic intensity} of  ${\cal N}$ is
  \[
\left( \Phi\left(\int_{(-\infty,u)} h(u{-}x){\cal N}(\diff x)\right), u{\in}\R\right).
  \]
We now introduce a dynamical system extending  a Radon measure on $\R_-$ into a measure on $\R$ exhibiting a Hawkes property on $\R_+$. The Markovian approach used in this paper relies heavily on this construction.

\begin{proposition}[Hawkes SDE]
\label{HSDEprop}
If  $m{\in}{\cal M}_p(\R_-)$ is such that $(h({-}u)){\in}L_1({m})$, then there exists a unique positive random measure ${\cal N}_m$ on $\R$ such that ${\cal N}_m{\equiv}m$ on $\R_-$ and the counting measure $({\cal N}_m((0,t]),t{\ge}0)$ satisfies the stochastic differential equation 
\begin{equation}\label{HSDE}
  \diff {\cal N}_m((0,t])={\cal P}\left(\left(0,\Phi\left(\int_{(-\infty,t)} h(t{-}x){\cal N}_m(\diff x)\right)\right),
  \diff t \right),
\end{equation}
for all $t{>}0$.

If $\Phi(0){>}0$, the points of ${\cal N}_m$ on $\R_+$ is a non-decreasing sequence of stopping times $(T_n,n{\ge}1)$, such that if
\begin{equation}\label{eqX}
E_{n}\steq{def} \int_{T_{n-1}}^{T_{n}}\Phi\left(\int_{({-}\infty,s)}
h\left(s{-}x\right){\cal N}_m(\diff x)\right)\,\diff s, \quad n{\ge}1,
\end{equation}
with the convention $T_0{=}0$, then $(E_n, n{\ge}1)$ is an i.i.d. sequence of exponential random variables with parameter $1$ and, for $n{\ge}1$,
the sequence $(E_{k},k{>}n)$ is independent of ${\cal F}_{T_{n}}$.
\end{proposition}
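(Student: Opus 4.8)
The plan is to construct ${\cal N}_m$ by recursively thinning the driving Poisson process ${\cal P}$, and to read off the law of the increments $(E_n)$ from the standard description of a planar Poisson process lying below a predictable curve.

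\emph{First point.} As long as no point of ${\cal N}_m$ has been placed in $(0,t)$, Equation~\eqref{HSDE} forces the conditional intensity at time $t$ to equal the \emph{deterministic} continuous function
\[
\lambda_0(t)\steq{def}\Phi\Bigl(\int_{(-\infty,0]}h(t{-}x)\,m(\diff x)\Bigr),\qquad A_0(t)\steq{def}\int_0^t\lambda_0(s)\,\diff s ,
\]
since for $t{\in}(0,T_1)$ one has $\int_{(-\infty,t)}h(t{-}x){\cal N}_m(\diff x){=}\int_{(-\infty,0]}h(t{-}x)m(\diff x)$. Let $T_1$ be the first time an atom of ${\cal P}$ falls in $\{(v,s):0{<}v{\le}\lambda_0(s)\}$; this is an $({\cal F}_t)$-stopping time, the trace of that region on $[0,t]$ being deterministic and ${\cal P}$ restricted to $\R_+{\times}[0,t]$ being ${\cal F}_t$-measurable. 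When $\Phi(0){>}0$, the bound $h(t{-}x){\le}h({-}x)$ for $t{\ge}0,\ x{\le}0$, together with $h({-}x){\in}L_1(m)$ and $h(t){\to}0$, give by dominated convergence $\int_{(-\infty,0]}h(t{-}x)m(\diff x){\to}0$ (cf.\ Lemma~\ref{Lem1}), hence $\lambda_0(t){\to}\Phi(0){>}0$ and $A_0({+}\infty){=}{+}\infty$. The region $\{(v,s):s{\le}t,\ 0{<}v{\le}\lambda_0(s)\}$ has Lebesgue measure $A_0(t)$, so the number of atoms of ${\cal P}$ it contains is Poisson with mean $A_0(t)$; therefore $\P(T_1{>}t){=}e^{-A_0(t)}$, $T_1{<}{+}\infty$ a.s., and $E_1\steq{def}A_0(T_1){=}\int_0^{T_1}\lambda_0(s)\,\diff s$ is exponential with parameter $1$. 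For $m{=}m_x$ this reads $T_1{=}{\cal T}(x,E_1)$ in the sense of Definition~\ref{Tdef}.

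\emph{Induction.} Suppose $T_1{<}\dots{<}T_n$ and the restriction of ${\cal N}_m$ to $(-\infty,T_n]$ have been built. For $t{>}T_n$ put $\lambda_n(t){=}\Phi(\int_{(-\infty,t)}h(t{-}x){\cal N}_m(\diff x))$; this is continuous on $(T_n,{+}\infty)$, and since only finitely many points were added, $h({-}x)$ remains integrable against ${\cal N}_m$ on $(-\infty,T_n]$ and the same dominated convergence argument gives $\lambda_n(t){\to}\Phi(0){>}0$, so $\int_{T_n}^{+\infty}\lambda_n{=}{+}\infty$. By the strong Markov property of ${\cal P}$ at the stopping time $T_n$, the point process ${\cal P}(\cdot{\cap}(\R_+{\times}(T_n,{+}\infty)))$, shifted by $T_n$, is again a planar Poisson process with Lebesgue intensity and is independent of ${\cal F}_{T_n}$. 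Defining $T_{n+1}$ as the first time after $T_n$ at which an atom of ${\cal P}$ falls below the graph of $\lambda_n$, the previous paragraph applied to this fresh Poisson process shows that $T_{n+1}{<}{+}\infty$ a.s.\ and that $E_{n+1}\steq{def}\int_{T_n}^{T_{n+1}}\lambda_n(s)\,\diff s$ is exponential with parameter $1$ and independent of ${\cal F}_{T_n}$, hence of $(E_1,\dots,E_n)$. This is precisely the asserted i.i.d.\ structure of $(E_n)$ and the independence of $(E_k,k{>}n)$ from ${\cal F}_{T_n}$.

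\emph{No explosion, uniqueness, main obstacle.} It remains to exclude $T_\infty\steq{def}\lim_n T_n{<}{+}\infty$, so that ${\cal N}_m\steq{def}m{+}\sum_{n{\ge}1}\delta_{T_n}$ is a Radon measure on $\R$ solving \eqref{HSDE} on all of $\R_+$. Assumption~\eqref{beta} and the continuity of $\Phi$ furnish $C$ with $\Phi(y){\le}C(1{+}y)$ for every $y{\ge}0$. Writing $N_t{=}{\cal N}_m((0,t])$ and stopping at $\tau_K{=}\inf\{t:N_t{\ge}K\}{\wedge}T_\infty$ (on $[0,\tau_K)$ the integrand of \eqref{HSDE} is bounded, so the expectations below are finite), taking expectations, applying Fubini and $h{\le}h(0)$,
\[
\E[N_{t{\wedge}\tau_K}]\le Ct\Bigl(1{+}\!\!\int_{(-\infty,0]}\!\!h({-}x)\,m(\diff x)\Bigr)+Ch(0)\int_0^t\E[N_{s{\wedge}\tau_K}]\,\diff s ,
\]
and Gronwall's lemma bounds the left-hand side by a finite quantity not depending on $K$; letting $K{\to}{+}\infty$ gives $\E[N_{t{\wedge}T_\infty}]{<}{+}\infty$ for every $t$, which forces $T_\infty{=}{+}\infty$ a.s.\ since $N_{T_\infty}{=}{+}\infty$ on $\{T_\infty{<}{+}\infty\}$. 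Uniqueness is built into the recursion: any positive measure coinciding with $m$ on $\R_-$ and satisfying \eqref{HSDE} has intensity $\lambda_0$ before its first point on $\R_+$, hence that point is $T_1$, then $\lambda_1$ until the next one, hence it is $T_2$, and so on, so it equals ${\cal N}_m$. \textbf{The main obstacle} is to make rigorous the time-change identity $\P(T_1{>}t){=}e^{-A_0(t)}$ and its iteration — that is, the strong Markov property of the planar Poisson process at the random times $T_n$ and the resulting propagation of independence — together with the exclusion of explosion, which is the one place where the at-most-linear growth~\eqref{beta} of $\Phi$ genuinely enters. When $\Phi(0)$ is not assumed positive the same recursion still defines ${\cal N}_m$, with the convention that the $T_n$, and then the number of points of ${\cal N}_m$ on $\R_+$, may be finite.
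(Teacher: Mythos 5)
Your construction is essentially the paper's: build the points $T_n$ recursively by thinning the planar Poisson process below the piecewise deterministic intensity curve, identify each increment $E_{n+1}{=}\int_{T_n}^{T_{n+1}}\lambda_n(s)\,\diff s$ as a standard exponential via the area/time-change identity (the paper phrases this through the distributional identity~\eqref{eqpp} with an auxiliary Poisson process ${\cal R}$), propagate independence by the strong Markov property of ${\cal P}$ at the stopping times $T_n$, and get finiteness of each $T_{n+1}$ from $\lambda_n(t)\to\Phi(0){>}0$, which is exactly Lemma~\ref{Lem1}. This part is correct.

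The one place you diverge is the Gronwall ``no explosion'' step, and there you prove more than the proposition asserts while slightly misreading its scope. The statement deliberately produces a \emph{positive random measure}, not a point process: the paper explicitly allows $\lim_n T_n{<}{+}\infty$ (see the remark following the proof, Proposition~\ref{TnIneq}, and Theorem~\ref{TransientH}, where blow-up actually occurs for $\Phi(u){=}(\nu{+}\beta u)^\gamma$, $\gamma{>}1$ --- a case in which~\eqref{beta} fails). Your Gronwall bound is valid whenever $\Phi(y){\le}C(1{+}y)$ and $h$ is bounded, and it is a nice observation that linear growth alone already rules out finite-time accumulation (even when $\alpha\beta{\ge}1$), but it is not needed for the proposition and should not be presented as part of it, since the construction must also cover activation functions for which~\eqref{beta} does not hold. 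Your explicit uniqueness argument (the recursion forces any solution to have the same successive points) is fine and is only implicit in the paper.
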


\begin{proof}
We construct by induction the sequence of points $(T_n)$ of ${\cal N}_m$.
The first point is defined as 
\[
T_1\steq{def} \inf\left\{t{\ge}0: \int_{(0,t]}{\cal P}
\left( \left(0,\Phi\left(\int_{(-\infty,0]} h(s{-}x)m(\diff x)\right)\right),\diff s\right){\ne}0\right\}.
\]
Let ${\cal R}$ be a Poisson process on $\R_+$ with intensity $\diff x$, the relation
\begin{multline}\label{eqpp}
\left(\int_{(0,t]}{\cal P}\left(\left(0,\Phi\left(\int_{(-\infty,0]} h(s{-}u)m(\diff u)\right)\right),\diff s\right), t{\ge}0\right)\\
    \steq{dist} \left({\cal R}\left(\left(0,\int_{(0,t]}\Phi\left(\int_{(-\infty,0]} h(s{-}u)m(\diff u)\right)\diff s\right)\right), t\ge 0\right)
\end{multline}
between the processes of counting measures of two point processes is an easy consequence of the equality of their respective Laplace transforms. See Chapter~1 of~\citet{robert_stochastic_2003}. 

If $E_1$ is the first point of ${\cal R}$, we deduce therefore the  identity 
\[
\int_{(0,T_1]}\Phi\left(\int_{(-\infty,0]} h(s{-}u)m(\diff u)\right)\diff s\steq{dist} E_1.
\]
Using Definition~\ref{Tdef}, the random variable $T_1$  has therefore the same distribution as the random variable ${\cal T}(x,E_1)$ where $x$ is the unique element of ${\cal S}_h$  such that $m{=}m_x$ and $E_1$ is an exponential random variable with parameter $1$. It is in particular almost surely finite by Lemma~\ref{Lem1}. 

By induction, for $n{\ge}1$, if $T_1{\le}T_2{\le}\cdots{\le}T_n{<}{+}\infty$, it is possible to define
\begin{multline}\label{TNeq}
T_{n+1}{\steq{def}} \inf\left\{\rule{0mm}{6mm}t{\ge}T_{n}:\rule{0mm}{4mm}\right.\\
\left. \int_{(T_n,t]}{\cal P}\left( \Phi\left(\int_{(-\infty,0]}\hspace{-5mm} h(s{-}x)m(\diff x)\right){+}
\sum_{k=1}^n h\left(s{-}T_k\right), \diff s\right){\ne}0\right\},
\end{multline}

The random variable $T_{n+1}$ as defined by~\eqref{TNeq} is clearly a stopping time, it is almost surely finite with the same argument
as for $T_1$. With the strong Markov property of ${\cal P}$ for the stopping time $T_n$ and Relation~\eqref{eqpp}, but conditionally  on ${\cal F}_{T_n}$, 
\begin{multline*}
  \left.  \left( \int_{(T_n,T_n+t]}{\cal P}\left( \Phi\left(\int_{(-\infty,0]}\hspace{-5mm} h(s{-}x)m(\diff x)\right){+}
\sum_{k=1}^n h\left(s{-}T_k\right),\diff s\right)\right|{\cal F}_{T_n}\right)\\
\steq{dist}  \left.\left({\cal R}\left(\left(0,\int_{T_n}^{T_n+t}\Phi\left(\int_{(-\infty,0]} h(s{-}u)m(\diff u){+}\sum_{k=1}^n h(s{-}T_k)\right)\diff s\right)\right)\right|{\cal F}_{T_n}\right),
\end{multline*}
hence, as for the case $n{=}1$, if $E_{n+1}$ is the distance between the $n$th and $(n{+}1)$th point of ${\cal R}$, it  is an exponential random variable independent of ${\cal F}_{T_n}$ and 
\begin{multline*}
E_{n+1}=\int_{T_n}^{T_{n+1}}\Phi\left(\int_{(-\infty,0]} h(s{-}u)m(\diff u){+}\sum_{k=1}^n h(s{-}T_k)\right)\diff s\\=
\int_{T_{n}}^{T_{n+1}}\Phi\left(\int_{({-}\infty,s)}
h\left(s{-}x\right){\cal N}_m(\diff x)\right)\, \diff s.
\end{multline*}
The  proposition is proved.
\end{proof}
In the proof  we have seen that the condition $\Phi(0){>}0$ in Lemma~\ref{Lem1} guarantees that there is almost surely an
infinite number of points in $\R_+$ for SDE~\eqref{HSDE}.

It cannot be excluded nevertheless that this non-decreasing sequence of points may have a finite limit with positive probability and therefore that ${\cal N}_m$ may not be a point process.  As it will be seen, it can indeed blow-up in finite time, i.e. the limit of its sequence $(T_n)$ of points in $\R_+$ may be finite with positive probability. Proposition~\ref{TnIneq} gives a condition for which the sequence $(T_n)$ converges almost surely to infinity when $m$ is the null measure.  Theorem~\ref{TransientH} considers a case when there is such a blow-up and establishes  a limiting result for the accumulation of these points.

However, when the sequence $(T_n)$ is converging almost surely to infinity, ${\cal N}_m$ is a point process, and it is not difficult to see that the dual predictable projection of the process of $({\cal N}_m((0,t]), t{\ge}0)$ is 
  \[
\left(\int_0^t \Phi\left(\int_{(-\infty,s)} h(s{-}x){\cal N}_m(\diff x)\right)\diff s, t{\ge}0\right).
\]
see Theorem~VI~(27.1) of~\citet{rogers_diffusions_2000}. Consequently, the stochastic intensity function of ${\cal N}_m$ 
on $\R_+$ is indeed
\[
\left(\Phi\left(\int_{(-\infty,s)} h(s{-}x){\cal N}_m(\diff x)\right)\right).
\]

\vspace{1em}

Proposition~\ref{HSDEprop} extends a point measure $m$ on $\R_-$ to a point process on $\R$, it can be also seen as a dynamical system on point processes on $\R_-$ in the following way.
\subsection*{A dynamical system}\label{Nt}
If $m{\in}{\cal M}_p(\R_-)$ is such that $h({-}x){\in}L_1({m})$ and the non-negative measure ${\cal N}_m$ defined by Relation~\eqref{HSDE} is a point process, for $t{\ge}0$, we introduce a  (random) dynamical system $(T_t(m))$ in ${\cal M}_p(\R_-)$  as,
\begin{equation}\label{DynSys}
\int_{(-\infty,0]} f(x)T_t(m)(\diff x)=
  \int f(x)\theta_t({\cal N}_m)(\diff x)= \int_{(-\infty,t]} f(x{-}t){\cal N}_m(\diff x),
\end{equation}
for any non-negative Borelian function on $\R_-$, $T_t(m)$ is the point process ${\cal N}_m$ seen from the point $t$. 

A stationary point process is a distribution $Q$  on ${\cal M}_p(\R)$ which is invariant distribution for the group of transformations $(\theta_t)$.  See Definition~\ref{defiStat} in Section~\ref{AppPP} of the appendix. By Relation~\eqref{HSDE}, it can be formulated as the existence  of a  distribution on ${\cal M}_p(\R_-)$ invariant by the operator $T_t$. 

When $\Phi(0){=}0$, the null measure is clearly a solution of Relation~\eqref{HSDE}. The next proposition shows that, under some mild condition, the null point process is the unique stationary Hawkes process in such a case.
\begin{proposition}\label{deathH}
  Under the condition
  \[
  \int_{\R_+}th(t)\diff t<{+}\infty,
  \]
   if for some $K{\ge}0$, the non-negative function $\Phi$ satisfies the relation $\Phi(x){\le}Kx$,
for all $x{\ge}0$, then there does not exist an ergodic stationary Hawkes process ${\cal N}$  which is non-trivial, i.e. such
that $\P({\cal N}{\not\equiv}0){>}0$.
\end{proposition}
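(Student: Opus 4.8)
The plan is to exploit the sub-linear bound $\Phi(x)\le Kx$ to dominate a putative non-trivial stationary Hawkes process by a linear Hawkes process, and then use a first-moment (intensity) computation to force a contradiction. First I would assume, for contradiction, that $\mathcal N$ is an ergodic, non-trivial stationary Hawkes process; by ergodicity and stationarity its intensity $\lambda\steq{def}\E[\mathcal N((0,1])]$ is a well-defined constant in $(0,+\infty]$, and non-triviality together with the martingale characterization should give $\lambda>0$. I would then use the dual predictable projection from the definition: taking expectations in
\[
\E\left[\mathcal N((s,t])\right]=\E\left[\int_s^t \Phi\left(\int_{(-\infty,u)}h(u-x)\,\mathcal N(\diff x)\right)\diff u\right],
\]
and, by stationarity, writing $\lambda=\E[\Phi(\int_{(-\infty,0)}h(-x)\,\mathcal N(\diff x))]$. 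Applying the bound $\Phi(x)\le Kx$ inside the expectation, together with Fubini–Tonelli and the stationarity of $\mathcal N$, gives $\lambda\le K\int_0^{+\infty}h(u)\,\diff u\cdot\lambda = K\alpha\lambda$.

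This inequality alone only yields a contradiction when $K\alpha<1$, which is not assumed, so the argument needs a genuinely different lever than a crude bound — this is the main obstacle. The resolution I would pursue is a coupling/monotonicity argument rather than a pure moment bound: construct on the common Poisson space the Hawkes SDE driven by $\Phi$ and, simultaneously, the linear Hawkes SDE driven by $x\mapsto Kx$ with the \emph{same} initial condition on $\R_-$, say the restriction of $\mathcal N$. Since $\Phi(x)\le Kx$ and the monotonicity machinery for linear Hawkes processes (the tools promised in Section~\ref{ExisSec} and used for Proposition~\ref{TnIneq}) give pathwise domination, $\mathcal N$ is stochastically dominated on $\R_+$ by the linear process. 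The key point is that a linear Hawkes process with activation $x\mapsto Kx$ and $K\alpha\ge1$ is \emph{explosive or null} in the stationary regime: the only stationary solution of the linear fixed-point relation when $K\alpha\ge 1$ is the null measure, because the branching / renewal equation $\lambda = K\alpha\lambda$ has no finite positive solution and the cluster representation of~\citet{hawkes_cluster_1974} produces infinite clusters. Hence the dominating linear process cannot be a non-trivial stationary point process, and neither can $\mathcal N$.

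To make this rigorous I would proceed in the following order. Step 1: record that stationarity plus the dual predictable projection gives the identity $\lambda=\E[\Phi(Z)]$ with $Z=\int_{(-\infty,0)}h(-x)\,\mathcal N(\diff x)$, and note $Z<\infty$ a.s.\ because $\int_{\R_+}th(t)\,\diff t<\infty$ forces $(h(-x))\in L_1(\mathcal N)$ for a stationary $\mathcal N$ of finite intensity (a standard Palm/Campbell computation, using that $\E[\mathcal N(\diff x)]=\lambda\,\diff x$ and $\int t\,h(t)\diff t<\infty$). Step 2: if $\lambda=+\infty$, show directly that $\mathcal N$ has a.s.\ infinitely many points in any bounded interval, contradicting that $\mathcal N$ is a point process (Radon). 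Step 3: with $\lambda\in(0,\infty)$, derive $\lambda\le K\alpha\lambda$, hence $K\alpha\ge1$. Step 4: run the coupling with the linear Hawkes process of rate $K$ and invoke its non-existence of non-trivial stationary versions when $K\alpha\ge 1$ — equivalently, use the cluster/branching criticality to show the dominating process accumulates infinitely many points, so $\mathcal N$ does too, again contradicting the Radon property. The delicate points are the $L_1(\mathcal N)$ integrability of $h(-\cdot)$ under the stated moment condition on $h$, and the pathwise domination on the common Poisson space; both rely on ingredients the paper develops for linear Hawkes processes, and I would cite Section~\ref{SecCrump} and the monotonicity lemmas of Section~\ref{ExisSec} for them.
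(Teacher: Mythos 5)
Your diagnosis of the obstacle is right: the Campbell/intensity computation only yields $\lambda\le K\alpha\lambda$, which is vacuous unless $K\alpha<1$. But the resolution you propose in Step~4 does not work, for a direction-of-domination reason. Coupling gives ${\cal N}\le{\cal M}$ pathwise on $\R_+$, where ${\cal M}$ is the linear process with rate $Kx$ started from ${\cal N}|_{\R_-}$. From an \emph{upper} bound you cannot conclude that ``the dominating process accumulates infinitely many points, so ${\cal N}$ does too,'' nor that ``the dominating linear process cannot be a non-trivial stationary point process, and neither can ${\cal N}$'': if $A\le B$ and $B$ explodes or fails to be stationary, nothing follows about $A$. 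Moreover, the non-existence of non-trivial stationary linear Hawkes processes without immigration when $K\alpha\ge1$ is itself essentially the statement to be proved (in the special case $\Phi(x)=Kx$, where $\Phi(0)=0$ forces $\nu=0$); in the critical case $K\alpha=1$ it is a delicate theorem of Br\'emaud--Massouli\'e whose validity depends precisely on the moment condition $\int th(t)\diff t<\infty$, so invoking it is close to circular.

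The lever the paper actually uses is different and elementary, and your Step~1 contains its germ without exploiting it. Since no points of ${\cal N}$ occur in $(0,T_1)$, the conditional law of the first positive point is
\[
\P(T_1\ge t\mid{\cal F}_0)=\exp\left(-\int_0^t\Phi\left(\int_{(-\infty,0]}h(s-x)\,{\cal N}(\diff x)\right)\diff s\right),
\]
so everything hinges on the \emph{total} integrated contribution of the past, $V=\int_0^{+\infty}\Phi\bigl(\int_{(-\infty,0]}h(s-x)\,{\cal N}(\diff x)\bigr)\diff s$. By $\Phi(x)\le Kx$, Fubini and Campbell's formula, $\E(V)\le\lambda K\int_0^{+\infty}sh(s)\diff s<+\infty$ --- this is where $\int th(t)\diff t<\infty$ enters (not, as in your Step~1, to make $Z$ finite; $Z$ is already integrable from $\alpha<\infty$). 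Hence $V<\infty$ a.s., so $\P(T_1=+\infty\mid{\cal F}_0)=e^{-V}>0$ a.s., while ergodicity plus non-triviality and Birkhoff's theorem force ${\cal N}(\R_+)=\infty$ a.s., hence $T_1<\infty$ a.s. This contradiction requires no coupling, no restriction on $K$, and no input about critical linear Hawkes processes.
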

See Definition~\ref{defiStat} for the stationarity and ergodicity properties,  they are with respect to the flow of translation $(\theta_t)$. See~\citet{cornfeld_ergodic_1982}.
\begin{proof}
Assume that ${\cal N}$ is a non-trivial ergodic stationary Hawkes process. The Birkhoff-Khinchin ergodic theorem, see~\cite{cornfeld_ergodic_1982}, gives the almost sure convergence 
\[
\lim_{t{\to}{+}\infty} \frac{{\cal N}((0,t])}{t}=\lambda\steq{def}\E({\cal N}((0,1]){>}0,
\]
since ${\cal N}$ is non-trivial, we obtain therefore that ${\cal N}(\R_+)$ is almost surely infinite. If  $T_1$  is  the first positive point of ${\cal N}$, the variable $T_1$  is in particular finite with probability $1$.
  
As in the proof of Proposition~\ref{HSDEprop}, we have
  \[
\P(T_1{\ge}t\mid {\cal F}_0)=\exp\left(-\int_0^t \Phi\left(\int_{({-}\infty,0]} h(s{-}x){\cal N}(\diff x)\right)\,\diff s\right).
  \]
   The stationary property of the point process gives the relation
   \[
   \E\left(\int_\R f(s){\cal N}(\diff s)\right)=\lambda \int_\R f(s)\diff s,
   \]
   for all $f{\in}{\cal C}_c(\R)$.  If
  \[
  V\steq{def}\int_0^{+\infty} \Phi\left(\int_{({-}\infty,0]} h(s{-}x){\cal N}(\diff x)\right)\,\diff s,
  \]
we have, by Fubini's Theorem,
\begin{multline*}
\E(V) \le K\E\left(\int_0^{+\infty} \int_{({-}\infty,0]} h(s{-}x){\cal N}(\diff x)\,\diff s\right)
\\= \lambda K\int_0^{+\infty} \int_{({-}\infty,0]} h(s{-}x)\,\diff x\diff s=\lambda K\int_0^{+\infty}s h(s)\diff s<{+}\infty,
\end{multline*}
hence $V$ is an integrable random variable, it is in particular almost surely finite. As a consequence we have that, almost surely, $\P(T_1{=}{+}\infty{\mid} {\cal F}_0){>}0$. This is a contradiction. The proposition is proved.
\end{proof}

\section{A Markov Chain Formulation}
\label{MarkSec}
In this section, we give another version of Proposition~\ref{HSDEprop} in terms of a Markovian dependence on the state space of sequences. 

\subsection{An alternative formulation}
We begin with a characterization of the Palm measure of a stationary Hawkes process.

For $m{\in}{\cal M}_p(\R)$, we define $(\tau_n(m),n{\in}\Z){\steq{def}}(t_{n}(m){-}t_{n-1}(m),n{\in}\Z)$, where $(t_n(m))$ is given by Relation~\eqref{eqt1} of the Appendix. As a random variable on ${\cal M}_p(\R)$, the sequence is also represented as $(\tau_n,n{\in}\Z)$. 
\begin{proposition}\label{InterPalmTheo}
If $\Phi(0){>}0$ and if $Q$ is the distribution on ${\cal M}_p(\R)$ of a stationary Hawkes process
associated to $\Phi$ and $h$ then, under its associated Palm measure $\widehat{Q}$, the sequence of inter-arrivals $(\tau_n,n{\in}\Z)$  is a stationary sequence. The sequence of random variables
\begin{equation}\label{InterPalm}
\left(\int_{0}^{\tau_{n+1}}\Phi\left(\sum_{k{\le}n}
h\left(s{+}\sum_{i=k{+}1}^n\tau_i\right)\right)\,\diff s,\quad n{\in}\Z\right),
\end{equation}
is i.i.d. with a common exponential distribution with parameter $1$.

If there exists a stationary sequence $(\tau_n,n{\in}\Z)$ satisfying Relation~\eqref{InterPalm}, then there exists a
stationary Hawkes process associated to $\Phi$ and $h$.
\end{proposition}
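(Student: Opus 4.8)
The plan is to exploit the classical duality between a stationary point process $Q$ on ${\cal M}_p(\R)$ and its Palm measure $\widehat Q$, together with the Hawkes SDE characterization of Proposition~\ref{HSDEprop}. For the first (direct) implication, I would start from the observation that the Palm measure $\widehat Q$ is, by construction, the distribution of the point process $Q$ conditioned to have a point at $0$; under $\widehat Q$ the sequence $(\tau_n,n{\in}\Z)$ of inter-arrivals is stationary because the shift $\theta_{t_1}$ acting on a point process with a point at $0$ corresponds on $\widehat Q$ to the shift of the inter-arrival sequence, and this is $\widehat Q$-preserving — this is exactly the content of the standard Palm theory recalled in Appendix~\ref{AppPP}. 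To identify the distribution of the family in~\eqref{InterPalm}, I would use that under the original (non-Palm) SDE formulation, Proposition~\ref{HSDEprop} tells us that between consecutive points $T_{n-1},T_n$ of a Hawkes process the compensator increment $E_n = \int_{T_{n-1}}^{T_n}\Phi(\int_{(-\infty,s)}h(s-x){\cal N}(\diff x))\diff s$ is i.i.d.\ exponential with parameter $1$, with $(E_k,k{>}n)$ independent of ${\cal F}_{T_n}$. Rewriting the inner integral $\int_{(-\infty,s)}h(s-x){\cal N}(\diff x)$ when $s{\in}(0,\tau_{n+1})$ and $0$ is the $n$-th point, the contribution of the points at $t_k$, $k{\le}n$, is precisely $\sum_{k\le n}h(s+\sum_{i=k+1}^n\tau_i)$ (using the convention~\eqref{Conv} for $k=n$), so $E_{n+1}$ becomes the $n$-th term of~\eqref{InterPalm}. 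Since this holds for the process under $Q$ for every choice of ``origin point'', transferring to $\widehat Q$ via the Palm inversion formula yields that the whole two-sided sequence~\eqref{InterPalm} is i.i.d.\ exponential$(1)$.

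For the converse implication, I would go the other way: given a stationary sequence $(\tau_n,n{\in}\Z)$ with the property that the family~\eqref{InterPalm} is i.i.d.\ exponential$(1)$, define on $\R$ the point process with points $t_0=0$, $t_n = \sum_{i=1}^n\tau_i$ for $n\ge 1$, $t_n = -\sum_{i=n+1}^0\tau_i$ for $n<0$, and take its distribution $\widehat Q$. One must first check this is a bona fide simple locally finite point process, i.e.\ that $t_n\to\pm\infty$; stationarity of $(\tau_n)$ plus $\E(\tau_1)$ being what it is (to be controlled, see below) gives this. Then I would build the candidate stationary point process $Q$ as the ``unbiased'' version of $\widehat Q$ via the standard Palm inversion: $Q$ is obtained by shifting $\widehat Q$ by a uniform point on $[0,\tau_1)$ and size-biasing by $\tau_1$ (equivalently, the formula $\int F\,\diff Q = \lambda^{-1}\widehat\E(\int_0^{\tau_1}F\circ\theta_s\,\diff s)$). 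By construction $Q$ is translation-invariant. The last, and main, task is to verify that this $Q$ actually satisfies the Hawkes martingale property, i.e.\ that $({\cal N}((s,t])-\int_s^t\Phi(\int_{(-\infty,u)}h(u-x){\cal N}(\diff x))\diff u)$ is a local martingale: this follows by running the inductive SDE construction of Proposition~\ref{HSDEprop} starting from the restriction ${\cal N}|_{\R_-}$ and checking that the law one obtains on $\R$ coincides with $Q$ — precisely because the exponential$(1)$, independence structure of~\eqref{InterPalm} is exactly the structure that Proposition~\ref{HSDEprop} shows is equivalent to the SDE being satisfied.

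The main obstacle, and the point demanding care, is the converse direction: one needs to make sure the reconstructed point process does not blow up (the $t_n$ genuinely escape to $\pm\infty$, so that ${\cal N}_m$ is a true point process and not merely a measure with finite accumulation point, in the sense discussed after Proposition~\ref{HSDEprop}), and one must handle the measurability/integrability issues in the Palm inversion — in particular that $\widehat\E(\tau_1){<}{+}\infty$ so the intensity $\lambda = 1/\widehat\E(\tau_1)$ is positive and finite, and that the condition $x{\in}{\cal S}_h$ (i.e.\ $(h(-u)){\in}L_1(m_x)$) holds $\widehat Q$-almost surely, which is what makes $\Phi(\int h\,\diff{\cal N})$ finite and the whole construction well-posed. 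I expect all of these to be consequences of the assumed i.i.d.\ exponential$(1)$ structure of~\eqref{InterPalm} combined with the standing assumptions~\eqref{alpha}--\eqref{beta} on $h$ and $\Phi$, but assembling them rigorously — especially ruling out explosion under only these hypotheses — is where the real work lies, and may require invoking the coupling/monotonicity comparison with a linear Hawkes process developed later in the paper.
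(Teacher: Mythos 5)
Your overall architecture coincides with the paper's: the direct implication rests on the i.i.d.\ exponential structure of the compensator increments from Proposition~\ref{HSDEprop}, and the converse reconstructs the stationary process from the sequence $(\tau_n)$ via a Palm-type inversion. The converse part of your plan is essentially the paper's (the paper shifts $m_\tau$ by a uniform variable on $[-K,K]$ and lets $K\to\infty$ rather than applying the exact size-biased inversion, but both are standard), and you correctly locate where the work lies: non-explosion, integrability of $\tau_1$, and the ${\cal S}_h$ condition --- the paper in fact quietly adds integrability of the $\tau_n$ as a hypothesis at that point of its proof.

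The genuine gap is in the direct implication, at precisely the step you dispatch with ``transferring to $\widehat Q$ via the Palm inversion formula''. The inversion formula~\eqref{Palm} expresses $Q$ in terms of $\widehat Q$, which is the wrong direction here; and what must be transferred is a statement about the \emph{joint law} of the variables~\eqref{InterPalm} under $\widehat Q$, not a $Q$-almost-sure pointwise property, so neither the inversion formula nor Mecke's formula hands it to you. This is exactly the caveat the paper states after the proposition: Proposition~\ref{HSDEprop} lives under $Q$, not under $\widehat Q$. The paper closes the gap with two observations you would need to supply: (i) the functionals $\Psi_n(m)=\int_{t_n(m)}^{t_{n+1}(m)}\Phi\left(\int_{(-\infty,s]}h(s{-}x)\,m(\diff x)\right)\diff s$ satisfy $\Psi_n(\theta_t(m))=\Psi_n(m)$ for $0\le t<t_1(m)$, hence $t\mapsto\Psi_n(\theta_t(m))$ is right-continuous at $0$ and the local-conditioning characterization $\lim_{t\searrow 0}\E_Q\left(F(\Psi_1,\dots,\Psi_n)\mid m([-t,0])\ne 0\right)=\E_{\widehat Q}\left(F(\Psi_1,\dots,\Psi_n)\right)$ applies; (ii) the conditioning event $\{m([-t,0])\ne 0\}$ is ${\cal F}_0$-measurable while each $\Psi_k$, $k\ge 1$, is independent of ${\cal F}_0$ by Proposition~\ref{HSDEprop}, so the conditioning does not perturb the joint law and the limit is $\E\left(F(E_1,\dots,E_n)\right)$. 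The two-sided i.i.d.\ claim then follows because a $\widehat\theta$-stationary sequence whose one-sided finite-dimensional laws are i.i.d.\ exponential is i.i.d.\ exponential. Without an argument of this kind the first half of the proposition is asserted rather than proved.
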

From the point of view of Proposition~\ref{HSDEprop} this proposition is quite intuitive, one has nevertheless to be
careful, as always in this setting, since Proposition~\ref{HSDEprop} is stated under the distribution $Q$.
\begin{proof}
For simplicity, in this proof we will use the canonical probability space $\Omega{=}{\cal M}_p(\R)$ endowed with the weak topology and $\E_R$ denotes the expectation with respect to a probability distribution $R$ on ${\cal M}_p(\R)$.

The stationary property of  the sequence $(\tau_n,n{\in}\Z)$ under $\widehat{Q}$ is clear, by definition of $\widehat{Q}$.
For $t{\ge}0$, if $f$ is a bounded Borelian function on ${\cal M}_p(\R)$, we denote
    \[
{\cal E}_t \steq{def} \{m\in {\cal M}_p(\R): m([-t,0]){\not=}0\}
    \]
then, Proposition~11.8, page~315 of \citet{robert_stochastic_2003} shows that if $t{\mapsto}f(\theta_t(m))$ is $Q$-almost surely right continuous at $t{=}0$, then the relation
    \[
    \lim_{t\searrow 0} \E_Q\left(f \mid {\cal E}_t\right) =E_{\widehat{Q}}(f)
    \]
holds. 

For $n{\in}\Z$ and $m{\in}{\cal M}_p(\R)$, define
    \[
    \Psi_n(m)\steq{def} \int_{t_n(m)}^{t_{n+1}(m)} \Phi\left(\int_{({-}\infty,s]} h(s{-}x)m(\diff x)\right)\,\diff s,
    \]
since, $Q$-almost surely,  $t_0(m){<}0{<}t_1(m)$ and $t_n(m){<}t_{n+1}(m)$, then for $t{\ge}0$ such that $0{\le}t{<}t_1(m)$, we have
    \[
    \Psi_n(\theta_t(m))=\int_{t_n(m)-t}^{t_{n+1}(m)-t} \Phi\left(\int_{({-}\infty,s{+}t]} h(s{+}t{-}x)m(\diff x)\right)\,\diff s=\Psi_n(m).
      \]
Let $F$ be a continuous bounded Borelian function on $\R_+^n$ then
    \[
    \lim_{t\searrow 0} \left.\E_Q\left(F\left(\rule{0mm}{4mm}(\Psi_i
       ,1{\le}i{\le}n)\right)\right| {\cal E}_t\right)
    = \E_{\widehat{Q}}\left(F\left(\rule{0mm}{4mm} (\Psi_i,1{\le}i{\le}n)\right)\right).
    \]
Since the event $ {\cal E}_t$ is ${\cal F}_0$-measurable  and that for any $1{\le}k{\le}n$, the random variable
    \[
    m\longrightarrow\int_{t_{k}(m)}^{t_{k+1}(m)}\Phi\left(\int_{({-}\infty,s]} h\left(s{-}x\right)m(\diff x)\right)\,\diff s
    \]
is independent of ${\cal F}_{t_k}$ and therefore of ${\cal F}_0$, by Proposition~\ref{HSDEprop}, by induction on $k$ for example, we obtain that
    \[
    \lim_{t\searrow 0} \left.\E_Q\left(F\left(\rule{0mm}{4mm}
        (\Psi_i,1{\le}i{\le}n)\right)\right| {\cal E}_t\right)=
    \E\left(F\left(\rule{0mm}{4mm} (E_i,1{\le}i{\le}n)\right)\right),
    \]
where $(E_i,1{\le}i{\le}n)$ are $n$ independent random variables with a common exponential distribution with parameter
$1$. By gathering these results we obtain that
  \[
  \E_{\widehat{Q}}\left(F\left(\rule{0mm}{4mm}(\Psi_i,1{\le}i{\le}n)\right)\right)=
  \E\left(F\left(\rule{0mm}{4mm} (E_i,1{\le}i{\le}n)\right)\right).
  \]
Under the probability distribution $\widehat{Q}$, the random variables $\Psi_i$, $1{\le}i{\le}n$, are i.i.d.
with a common exponential distribution with parameter~$1$. Relation~\eqref{InterPalm} is a consequence of the relation, for $m{\in}{\cal M}_p(\R)$, 
\begin{multline*}
\Psi_n(m)= \int_{t_n(m)}^{t_{n+1}(m)}\Phi\left(\sum_{k{\le}n} h\left(s{-}t_k(m)\right)\right)\,\diff s
\\ = \int_{0}^{\tau_{n+1}(m)}\Phi\left(\sum_{k{\le}n} h\left(s{+}\sum_{i=k{+}1}^n\tau_i(m)\right)\right)\,\diff s,
\end{multline*}
hence $\Psi_{n+1}(m)$ is the variable $\Psi_n(m)$ associated to the sequence $(\tau_{n+1}){=}\widehat{\theta}(\tau_n)$, see Appendix~\ref{AppPP}. 
By invariance of $\widehat{Q}$ with respect to $\widehat{\theta}$, see Proposition~11.18 of~\citet{robert_stochastic_2003} for example, the sequence $(\Psi_n, {n{\in}\Z})$ is stationary  under  $\widehat{Q}$  and, therefore, it is  i.i.d. with a common exponential distribution with parameter~$1$.

 Now we assume that  there exists a stationary sequence ${\tau}{=}(\tau_n,n{\in}\Z)$ of integrable random variables satisfying Relation~\eqref{InterPalm}.
 Recall that $m_\tau$ is the point process defined by Relation~\eqref{mx}. Using a similar proof as in Proposition~\ref{HSDEprop},
 we can show that $m_\tau$ satisfies an Hawkes SDE~\eqref{HSDE} on $\R$.
 For $u{\in}\R$, the same property clearly  holds for $\theta_u(m_\tau)$, the point process $m_\tau$ translated at $u$, see Definition~\eqref{thetat}. For $K{>}0$, let $U_K$ be an independent uniform random variable on $[-K,K]$, the point process $\theta_{U_K}(m_\tau)$ satisfies the Hawkes property.  With the same method used for the proof of Proposition~11.2 of~\cite{robert_stochastic_2003}, we obtain that, as $K$ goes to infinity, $\theta_{U_K}(m_\tau)$ converges in distribution to a stationary point process whose Palm measure is given by the distribution $\tau$.  It is not difficult to show that the Hawkes property is preserved in the limit. The proposition is proved.
\end{proof}

\subsection{A Markov chain on $\mathbf{{\cal S}}$}

The previous proposition has highlighted the importance of the Palm space, and therefore led us to develop
a second formulation of the Hawkes property using a Markovian kernel. 

\begin{definition}\label{MarkDef}
The sequence of random variables $({\cal X}_n^x)$ with initial point ${\cal X}_0^x{=}x{\in}{\cal S}$ is defined by
induction as follows, for $n{\ge}0$,
\begin{equation}\label{Trans}
{\cal X}_{n+1}^x{=}(X^x_{n{+}1},{\cal X}_n^x) 
\end{equation}
where $X^x_{n{+}1}$ is defined by the relation
\begin{equation}\label{eqX0}
 \int_{0}^{X^x_{n+1}} \Phi\left( \int_{\R_-}h(T_{n}{+}s{-}u)m(\diff u){+}
        \sum_{k{=}1}^{n} h\left(s{+}\sum_{i=k+1}^{n}X^x_i \right)\right)\,\diff s=E_{n+1},
\end{equation}
with $T_{n}{=}X^x_1{+}\cdots{+}X^x_{n}$ and  $(E_k)$ is an  i.i.d. sequence of exponentially distributed random variable with parameter $1$.

The associated Markovian kernel is denoted by ${\cal K}$
\begin{equation}\label{Kernel}
\int_{\cal S}f(y) {\cal K}(x,\diff y)= \E_x(f({\cal X}_1^x)),
\end{equation}
for a non-negative Borelian function $f$ on ${\cal S}$.
\end{definition}
The element  ${\cal X}_{n+1}^x$ is obtained by shifting ${\cal X}_n^x$ and adding
$X^x_{n{+}1}{=}{\cal T}({\cal X}_n^x,E_{n+1})$ at the beginning of the sequence. ${\cal T}$
is the operator defined by Relation~\eqref{Tau}.

The sequence  $({\cal X}_n^x)$ clearly has the Markov property.

\begin{proposition}[The Markov chain $({\cal X}_n^x)$ and Hawkes SDEs]\label{HMCprop}
If $\Phi(0){>}0$ and  $m{\in}{\cal M}_p(\R_-)$ is such that $m(\{0\}){>}0$, then the distribution of ${\cal N}_m$, the solution of Relation~\eqref{HSDE} can be expressed as
  \[
  {\cal N}_m\steq{dist} m + \sum_{n{\ge}1} \delta_{T_n},
  \]
with $T_0{=}0$ and, for $n{\ge}1$, $T_{n+1}{-}T_n{=}X^x_{n+1}$,    where the sequence $(X^x_n)$ is defined by Relation~\eqref{eqX0}.

The process   $({\cal X}_n^x){=}((X^x_n,\ldots,X^x_1,{\cal X}_0^x))$  is the Markov chain with transition kernel ${\cal K}(\cdot,\cdot)$ of
Relation~\eqref{Kernel} and initial point
  \[
{\cal X}_0^x{=}(s_{n{+}1}{-}s_n), \text{ if } m{=}(s_n, n{\ge}0)),
\]
with the convention $\cdots{\le}s_n{\le}s_{n-1}{\le}\cdots{\le}s_1{\le}s_0{=}0$. 
\end{proposition}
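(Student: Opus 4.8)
The plan is to regard this statement as a transcription of Proposition~\ref{HSDEprop} into the state space $\mathcal{S}$: I would follow the inductive construction of $\mathcal{N}_m$ carried out there and match it, step by step, with the recursion of Definition~\ref{MarkDef}.

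First I would invoke Proposition~\ref{HSDEprop}. Since $\Phi(0){>}0$ it produces the non-decreasing sequence of points $(T_n,n{\ge}1)$ of $\mathcal{N}_m$ on $\R_+$, with $T_0{=}0$, together with the i.i.d.\ sequence $(E_n)$ of exponential variables with parameter~$1$ defined by~\eqref{eqX}, and $(E_k,k{>}n)$ is independent of $\mathcal{F}_{T_n}$. Setting $X^x_{n}\steq{def}T_n{-}T_{n-1}$, I would note that for $s{\in}[T_n,T_{n+1})$ the restriction of $\mathcal{N}_m$ to $({-}\infty,s)$ is $m$ on $\R_-$ plus $\sum_{k=1}^{n}\delta_{T_k}$, so that the integrand of~\eqref{eqX} equals $\int_{\R_-}h(s{-}u)\,m(\diff u){+}\sum_{k=1}^n h(s{-}T_k)$; the change of variable $s{=}T_n{+}s'$, combined with $T_n{-}T_k{=}\sum_{i=k+1}^n X^x_i$, then turns the relation $E_{n+1}{=}\int_{T_n}^{T_{n+1}}\Phi(\cdots)\,\diff s$ into precisely~\eqref{eqX0}. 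This already establishes the first assertion: $\mathcal{N}_m\steq{dist}m{+}\sum_{n\ge1}\delta_{T_n}$ with $T_{n+1}{-}T_n{=}X^x_{n+1}$ and the $(X^x_n)$ solving~\eqref{eqX0}.

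Next I would identify $(\mathcal{X}_n^x)$ with the Markov chain of kernel $\mathcal{K}$. Writing $m{=}(s_k,k{\ge}0)$ with $s_0{=}0$, the measure $m_{\mathcal{X}_n^x}$ attached by~\eqref{mx} to $\mathcal{X}_n^x{=}(X^x_n,\ldots,X^x_1,\mathcal{X}_0^x)$ has for points $T_j{-}T_n$, $j{=}n,n{-}1,\ldots,0$, followed by the points of $m$ translated by ${-}T_n$; reading off the second expression in Definition~\ref{Tdef}, one obtains
\[
\int_{\R_-}h(T_n{+}s{-}u)\,m(\diff u)+\sum_{k=1}^n h\Bigl(s{+}\sum_{i=k+1}^n X^x_i\Bigr)=\int_{({-}\infty,s)}h(s{-}u)\,m_{\mathcal{X}_n^x}(\diff u),
\]
and along the way I would check that $\mathcal{X}_n^x{\in}\mathcal{S}_h$: the tail of the left-hand sum is bounded, by monotonicity of $h$, by $\int_{\R_-}h({-}u)\,m(\diff u){<}{+}\infty$, which is the assumption under which $\mathcal{N}_m$ is defined. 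Since $\Phi(0){>}0$, the map $t{\mapsto}\int_0^t\Phi(\cdots)\,\diff s$ is continuous, strictly increasing and, by Lemma~\ref{Lem1}, divergent, so~\eqref{eqX0} says exactly that $X^x_{n+1}{=}\mathcal{T}(\mathcal{X}_n^x,E_{n+1})$, with $\mathcal{T}$ as in~\eqref{Tau}. As $\mathcal{X}_n^x$ is $\mathcal{F}_{T_n}$-measurable while $E_{n+1}$ is, by Proposition~\ref{HSDEprop}, exponential with parameter~$1$ and independent of $\mathcal{F}_{T_n}$, the identity $\mathcal{X}_{n+1}^x{=}(\mathcal{T}(\mathcal{X}_n^x,E_{n+1}),\mathcal{X}_n^x)$ is the recursive construction of Definition~\ref{MarkDef}; hence $(\mathcal{X}_n^x)$ is the Markov chain with kernel $\mathcal{K}$ started at $\mathcal{X}_0^x$, the inter-arrival sequence of $m$.

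I expect the only genuine difficulty to be the index bookkeeping in the displayed identity — in particular making sure that the atom of $m$ at $0$ (which is exactly why one requires $m(\{0\}){>}0$) is counted once, and matched with the point $T_0{=}0$ of the Hawkes process. Beyond this, no argument is needed that is not already contained in Proposition~\ref{HSDEprop}.
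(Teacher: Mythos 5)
Your proof is correct and takes essentially the same route as the paper, which simply observes that Relation~\eqref{eqX0} is a rewriting of Relation~\eqref{eqX} of Proposition~\ref{HSDEprop}; you merely make explicit the change of variable $s{=}T_n{+}s'$, the identification $X^x_{n+1}{=}{\cal T}({\cal X}_n^x,E_{n+1})$, and the independence of $E_{n+1}$ from ${\cal F}_{T_n}$ that the paper leaves implicit.
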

In~\eqref{eqX0}, recall the convention of Relation~\eqref{Conv}, for $s{\ge}0$ and $n{\ge 2}$, 
\[
\sum_{k{=}1}^{n-1} h\left(s{+}\sum_{i=k+1}^{n-1}X^x_i \right)=
h(s){+} \sum_{k{=}1}^{n-2} h\left(s{+}\sum_{i=k+1}^{n-1}X^x_i \right).
\]
\begin{proof}
This is a straightforward consequence that Relation~\eqref{eqX0} is a rewriting of Relation~\eqref{eqX} of
Proposition~\ref{HSDEprop}.
\end{proof}

We can now state the main result concerning the relation between the Hawkes SDE and the Markov transition kernel ${\cal K}$.
\begin{proposition}\label{PalmMarkov}
The  Markov chain associated to transition kernel ${\cal K}$ of Definition~\eqref{Kernel} has an invariant distribution on ${\cal S}_h$ if and only if there exists a stationary Hawkes process associated to the functions $\Phi$ and $h$.

The distribution of the sequence of inter-arrival times of a stationary Hawkes process associated to the functions $\Phi$ and $h$ is an invariant measure for the  Markov chain associated to transition kernel ${\cal K}$. 
\end{proposition}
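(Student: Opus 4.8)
The plan is to combine Proposition~\ref{InterPalmTheo}, which links stationary Hawkes processes to stationary sequences satisfying Relation~\eqref{InterPalm}, with Proposition~\ref{HMCprop}, which identifies the dynamics of $({\cal X}_n^x)$ with the Hawkes SDE~\eqref{eqX}. The key observation is that Relation~\eqref{eqX0} defining the Markov kernel ${\cal K}$ is, when $m{=}m_x$ with $x{\in}{\cal S}_h$, exactly the statement that the increments $X^x_{n+1}$ consume an i.i.d.\ sequence of parameter-$1$ exponential clocks at the rate prescribed by the Hawkes intensity built from the past encoded by ${\cal X}_n^x$. In other words, if $\mu$ is a probability on ${\cal S}_h$, the chain $({\cal X}_n^x)$ started from $\mu$ is stationary (i.e.\ $\mu{\cal K}{=}\mu$) precisely when the bilateral sequence $(\tau_n,n{\in}\Z)$ obtained by extending the stationary version of $({\cal X}_n)$ has the property that the integrals in~\eqref{InterPalm} form an i.i.d.\ exponential sequence.

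First I would prove the ``if'' direction: given a stationary Hawkes process $Q$, Proposition~\ref{InterPalmTheo} provides, under the Palm measure $\widehat{Q}$, a stationary sequence $(\tau_n,n{\in}\Z)$ with $(\Psi_n)$ i.i.d.\ $\mathrm{Exp}(1)$. I would let $\mu$ be the law of $(\tau_1,\tau_0,\tau_{-1},\ldots){\in}{\cal S}$ under $\widehat{Q}$; the finiteness of the integrals in~\eqref{InterPalm} forces $\mu$ to be carried by ${\cal S}_h$ (since if the series $\sum_k h(\sum_{i}\tau_i)$ diverged the inner term would be infinite and $\Phi$ being continuous the integral could not equal a finite exponential variable). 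Then, comparing the defining relation~\eqref{eqX0} of $X^x_{n+1}$ with the identity $\Psi_n(m){=}\int_0^{\tau_{n+1}}\Phi(\sum_{k{\le}n}h(s{+}\sum_{i=k+1}^n\tau_i))\,\diff s$ from the end of the proof of Proposition~\ref{InterPalmTheo}, one sees that under $\widehat{Q}$ the sequence $(\tau_{n+1},\tau_n,\ldots)$ is obtained from $(\tau_n,\tau_{n-1},\ldots)$ by exactly one step of the kernel ${\cal K}$, driven by $E_{n+1}{=}\Psi_n$, and that $E_{n+1}$ is independent of the past $\sigma$-field. Stationarity of $(\tau_n)$ then gives $\mu{\cal K}{=}\mu$, so ${\cal K}$ has an invariant distribution on ${\cal S}_h$. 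This argument simultaneously proves the second assertion of the proposition.

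For the ``only if'' direction, I would start from an invariant distribution $\mu$ of ${\cal K}$ on ${\cal S}_h$, run the stationary chain $({\cal X}_n,n{\in}\Z)$ (using Kolmogorov extension to get a bilateral stationary version), and read off a bilateral stationary sequence $(\tau_n,n{\in}\Z)$ of inter-arrival times together with the driving exponentials $(E_n)$. The content of Relation~\eqref{eqX0}, together with the fact that each $E_{n+1}$ is $\mathrm{Exp}(1)$ and independent of $\mathcal{X}_n$, is precisely that the integrals in~\eqref{InterPalm} equal the $E_n$'s and hence are i.i.d.\ $\mathrm{Exp}(1)$; the support condition $\mu({\cal S}_h){=}1$ guarantees the integrands are finite so the rewriting is legitimate. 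One then invokes the converse part of Proposition~\ref{InterPalmTheo}: a stationary sequence satisfying~\eqref{InterPalm} yields a stationary Hawkes process (built as the $K{\to}\infty$ limit of $\theta_{U_K}(m_\tau)$). The only point requiring care is integrability of the $\tau_n$ — Proposition~\ref{InterPalmTheo} asks for integrable random variables — which should follow from a lower bound on the intensity (e.g.\ $\Phi(0){>}0$ gives $\E(\tau_n)\le 1/\Phi(0){<}{+}\infty$ via~\eqref{eqX0} and Jensen, since $\Phi(\cdot)\ge\Phi(0)$ would need monotonicity; absent that, one argues directly that $\mathcal{T}(x,a)$ has finite mean using $\Phi$ continuous, $\Phi(0)>0$).

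The main obstacle I anticipate is the measure-theoretic bookkeeping relating the one-sided state space ${\cal S}$ of the Markov chain to the two-sided Palm sequence $(\tau_n,n{\in}\Z)$: one must check that the invariant law $\mu$ genuinely extends to a bilateral stationary process (no issue, by stationarity and Kolmogorov), that the ``past'' information $\int_{\R_-}h(T_n{+}s{-}u)m(\diff u)$ appearing in~\eqref{eqX0} is consistently encoded by the infinite sequence ${\cal X}_n$ (this is where ${\cal S}_h$, not just ${\cal S}$, is essential), and that the independence structure of the driving $(E_n)$ is compatible on both sides. Verifying that the integrands in~\eqref{eqX0} and~\eqref{InterPalm} coincide as functionals — i.e.\ that $\int_{\R_-}h(T_n{+}s{-}u)m_x(\diff u){+}\sum_{k=1}^n h(s{+}\sum_{i=k+1}^n X^x_i)$ telescopes into $\sum_{k\le n}h(s{+}\sum_{i=k+1}^n\tau_i)$ under the identification $m{=}m_x$ — is the routine but slightly delicate computation at the heart of the equivalence, and I would present it explicitly once rather than leave it implicit.
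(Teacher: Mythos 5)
Your proposal is correct and follows essentially the same route as the paper, whose proof is simply a two-sentence invocation of Proposition~\ref{InterPalmTheo} in both directions (identifying the Palm inter-arrival sequence with an invariant law of ${\cal K}$, and conversely building a bilateral stationary chain satisfying Relation~\eqref{InterPalm}). The additional details you supply --- the support on ${\cal S}_h$, the matching of the integrands in~\eqref{eqX0} and~\eqref{InterPalm}, and the integrability of the $\tau_n$ --- are points the paper leaves implicit, and your treatment of them is sound.
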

\begin{proof}
  If there exists a stationary Hawkes point process $(t_n)$, Proposition~\ref{InterPalmTheo} shows that the distribution of $(t_{n+1}{-}t_n)$ under its Palm distribution is an invariant distribution of the Markov chain $({\cal X}_n)$. Conversely,  if the Markov chain associated to transition kernel ${\cal K}$ has an invariant distribution, then one can construct a stationary version of the Markov chain  ${\cal X}{\steq{def}}({\cal X}_n)$, in particular ${\cal X}$ satisfies Relation~\eqref{InterPalm}.  Proposition~\ref{InterPalmTheo} shows then that there exists a stationary Hawkes point process in this case.
\end{proof}

\subsection{Markov Chains Starting from the Empty State}
We define  Markov chains $({\cal X}^0_n)$ with transition kernel ${\cal K}$ defined by Relation~\eqref{Kernel} when the initial state  empty, i.e.  it is the constant sequence equal to ${+}\infty$, i.e.\ ${\cal X}_0^0{=}({+}\infty)$. This initial state corresponds to the case of  a point process with a point at $0$ only.
The sequence $({\cal X}^0_n)$ is in fact associated to the point process ${\cal N}_{\delta_0}$ of Proposition~\ref{HSDEprop}.

The Markov chain  can be defined by, for  $n{\ge}1$,
\[
 {\cal X}_n^0=(X_n^0,X_{n-1}^0,X_{n-2}^0,\ldots,X_2^0,X_{1}^0,{+}\infty),
\]
where the sequence  $(X_n^0)$ is defined by Relation~\eqref{eqX0} with $m{=}\delta_0$. 
\begin{proposition}\label{TnIneq}
If  $\Phi(0){>}0 $ and $\alpha$ and $\beta$ are defined by Relations~\eqref{alpha} and~\eqref{beta},   there exist $\nu{>}0$  and $\beta_0{>}\beta$ such that,  almost surely, for all $n{\ge}1$, 
\begin{equation}\label{eqT0}
\sum_{k=1}^n \left(E_{k}{-}\alpha\beta_0\right)\le \nu \sum_{k=1}^n X_{k}^0{-}\beta_0\sum_{k=1}^n \overline{H}\left(\sum_{j=k}^n X_j^0\right),
\end{equation}
with 
\[
\overline{H}(x){=}\int_x^{+\infty} h(s)\,\diff s,
\]
If $\alpha\beta{<}1$,  the random measure ${\cal N}_{\delta_0}$ solution of Relation~\eqref{HSDE} of Proposition~\ref{HSDEprop} is almost surely a point process. 
\end{proposition}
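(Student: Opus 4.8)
The plan is to turn the defining relation~\eqref{eqX0} for the increments $(X^0_m)$ into a pathwise comparison with a random walk, using only that $\Phi$ has at most linear growth. Fix any $\beta_0{>}\beta$; by Relation~\eqref{beta} and the continuity of $\Phi$ there is a constant $\nu{>}0$ (this will be the $\nu$ of the statement, and $\nu{\ge}\Phi(0){>}0$ since $\Phi$ is bounded on compacts with $\Phi(0){>}0$) such that $\Phi(t){\le}\beta_0 t{+}\nu$ for all $t{\ge}0$. By Lemma~\ref{Lem1} every $X^0_m$ is a.s.\ finite, so all the integrals and rearrangements below are legitimate. Writing $T_m{=}X^0_1{+}\cdots{+}X^0_m$ and using that, the initial measure being $\delta_0$, Relation~\eqref{eqX0} for the $m$-th increment reads
\[
E_m=\int_0^{X^0_m}\Phi\Bigl(\sum_{k=0}^{m-1}h\bigl(s+T_{m-1}-T_k\bigr)\Bigr)\diff s ,
\]
I would bound the integrand by $\beta_0(\cdot){+}\nu$ and apply the elementary identity $\int_0^x h(s{+}a)\diff s{=}\overline{H}(a){-}\overline{H}(a{+}x)$ to each term, which gives
\[
E_m\le \beta_0\sum_{k=0}^{m-1}\bigl(\overline{H}(T_{m-1}-T_k)-\overline{H}(T_m-T_k)\bigr)+\nu X^0_m .
\]

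Next I would sum over $m=1,\dots,n$, interchange the two summations, and telescope the inner sum in $m$: for fixed $k$,
\[
\sum_{m=k+1}^{n}\bigl(\overline{H}(T_{m-1}-T_k)-\overline{H}(T_m-T_k)\bigr)=\overline{H}(0)-\overline{H}(T_n-T_k)=\alpha-\overline{H}(T_n-T_k),
\]
using $\overline{H}(0){=}\alpha$. This produces $\sum_{m=1}^n E_m\le \beta_0\bigl(n\alpha-\sum_{k=0}^{n-1}\overline{H}(T_n-T_k)\bigr)+\nu T_n$, and since $T_n-T_k=\sum_{j=k+1}^n X^0_j$ and $T_n=\sum_{k=1}^n X^0_k$, reindexing $k\mapsto k{+}1$ in the $\overline{H}$-sum yields precisely Relation~\eqref{eqT0}.

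For the last assertion, I would simply discard the non-positive term in~\eqref{eqT0}, getting the pathwise inequality $\nu\,T_n\ge\sum_{k=1}^n(E_k-\alpha\beta_0)$, i.e.\ $T_n$ is bounded below by a scaled random walk with increments $E_k-\alpha\beta_0$. By the strong law of large numbers $\tfrac1n\sum_{k=1}^n E_k\to 1$ a.s., so $\tfrac1n\sum_{k=1}^n(E_k-\alpha\beta_0)\to 1-\alpha\beta_0$. When $\alpha\beta{<}1$ one may choose, from the outset, $\beta_0\in(\beta,1/\alpha)$ (with the convention $1/0{=}{+}\infty$, the case $\alpha{=}0$ being anyway trivial), so that the drift $1-\alpha\beta_0$ is strictly positive; then $T_n\to{+}\infty$ a.s., which means the non-decreasing sequence $(T_n)$ of points of ${\cal N}_{\delta_0}$ on $\R_+$ does not accumulate, i.e.\ ${\cal N}_{\delta_0}$ is a.s.\ a Radon point measure, hence a point process.

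There is no serious obstacle here: the only analytic ingredients are the at-most-linear growth~\eqref{beta} of $\Phi$ and the SLLN. The one step that requires care is the interchange of the double sum together with the telescoping, and the preliminary verification — via Lemma~\ref{Lem1} — that every $X^0_m$ is finite, so that writing $\int_0^{X^0_m} h(s{+}a)\diff s=\overline{H}(a)-\overline{H}(a{+}X^0_m)$ and rearranging the series is valid.
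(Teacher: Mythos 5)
Your proof is correct and follows essentially the same route as the paper: dominate $\Phi$ by the affine function $\nu{+}\beta_0 t$, convert each $\int_0^{X^0_m}h(s{+}a)\diff s$ into an increment $\overline{H}(a){-}\overline{H}(a{+}X^0_m)$, and sum to produce Relation~\eqref{eqT0}, then conclude via the strong law of large numbers with $\beta_0{\in}(\beta,1/\alpha)$. The only (cosmetic) difference is that you telescope a finite double sum directly where the paper runs a backward induction on the index; the algebra is identical.
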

\begin{proof}
For any  $\beta_0{>}\beta$, there exists some $\nu{>}0$ such that  Relation $\Phi(x){\le}\nu{+}\beta_0x$, holds for all $x{\ge}0$.  Equation~\eqref{eqX0} gives, for $n{\ge}1$,
\begin{multline}    \label{ineq1}
   E_n=\int_{0}^{X_n^0}\Phi\left(\sum_{k=1}^{n} h\left(s{+}\hspace{-1mm}\sum_{i=k}^{n-1} X_i^0\right)\right)\,\diff s
\\   \le \nu X_n^0+ \beta_0 \int_0^{X_n^0}\sum_{k=1}^{n} h\left(s{+}\hspace{-1mm}\sum_{i=k}^{n-1} X_i^0\right)\,\diff s,
\end{multline}
and, therefore, with the definition of $\overline{H}$, 
\begin{align*}
 E_{n} &\le \nu X_n^0{+}\beta_0 \int_0^{X_n^0} h\left(s\right)\,\diff s{+}
 \beta_0 \int_0^{X_n^0}\sum_{k=1}^{n-1} h\left(s{+}\hspace{-1mm}\sum_{i=k}^{n-1} X_i^0\right)\,\diff s,\\
 E_{n}{-}\alpha\beta_0&\le \nu X_n^0{-}\beta_0 \overline{H}(X_n^0){+}
        \beta_0 \int_{X_{n-1}^0}^{X_{n-1}^0+X_n^0}
            \sum_{k=1}^{n-1} h\left(s{+}\sum_{i=k}^{n-2} X_i^0\right)\,\diff s.
\end{align*}
By using  Inequality~\eqref{ineq1} for the index $n{-}1$  and by adding these relations, we get
\begin{multline*}
  \sum_{k=n-1}^n \left(E_{k}{-}\alpha\beta_0\right) \le  \nu\left[ X_{n}^0{+} X_{n-1}^0\right]
      {-}\beta_0 \left[\overline{H}(X_n^0){+}\overline{H}(X_n^0{+}X_{n-1}^0)\right]
\\ +\beta_0\int_{X_{n-2}^0}^{X_{n-2}^0{+}X_{n-1}^0{+}X_n^0}
    \sum_{k=0}^{n-2} h\left(s+\sum_{i=k+1}^{n-3} X_i^0\right)\,\diff s.
\end{multline*}
By proceeding by induction, we finally get the relation, for $0{\le}p{<}n$,
\begin{multline*}
\sum_{k=n-p}^n \left(E_{k}{-}\alpha\beta_0\right)
\le \nu \sum_{k=n-p}^n X_{k}^0 {-}\beta_0\sum_{k=n-p}^n \overline{H}\left(\sum_{j=k}^n X_j^0\right)\\
 +\beta_0\int_{X_{n-p-1}^0}^{X_{n-p-1}^0{+}\cdots{+}X_n^0}
 \sum_{k=1}^{n-p-1} h\left(s+\sum_{i=k}^{n-p-2} X_i^0\right)\,\diff s,
\end{multline*}
for $p{=}n{-}1$ it gives Relation~\eqref{eqT0}. 

If $\alpha\beta{<}1$, $\beta_0$ can be chosen so that  $\alpha\beta_0{<}1$ for some $\nu{>}0$, hence, by the law of large numbers
\[
\liminf_{n\to+\infty}\frac{1}{n}\sum_{k=1}^n X_k^0\geq \frac{1{-}\alpha\beta_0}{\nu}>0, 
\]
holds almost surely, which implies that ${\cal N}_{\delta_0}$ is almost surely a point process. The proposition is proved. 
\end{proof}

\section{Affine Activation Function}\label{SecCrump}
In this section, we present quickly in a self-contained way  a well-known existence result for a stationary Hawkes process  when the activation function is affine,
\begin{equation}\label{phib}
\Phi_{\beta}(x)=\nu{+}\beta x, \quad x{\ge}0,
\end{equation}
where $\nu$, $\beta{>}0$. This is the model investigated in the classical reference~\citet{hawkes_cluster_1974}. The results are presented in our Markovian setting $({\cal X}_n)$.  It is an important ingredient in the proof of existence of stationary Hawkes processes of Section~\ref{ExisSec}. 

The associated Hawkes process can be interpreted in terms of birth instants of a branching process: births of external individuals occur according to a Poisson process with rate $\nu$, each individual with age $t{>}0$ gives birth to a new individual at rate $\beta h(t)$, the quantity $\alpha\beta$ turns out to be the average number of births due to an individual. Recall that $\alpha$ is the integral of $h$ on $\R_+$, see Relation~\eqref{alpha}. This  branching characterization of the model will not be explicitly used in our arguments. 

We denote by ${\cal Y}_n^0{=}(Y^0_n,Y^0_{n-1},\ldots,Y^0_1,{+}\infty)$ the Markov chain of Definition~\ref{MarkDef} for the function $\Phi{=}\Phi_{\beta}$, whose initial state is empty, i.e. ${\cal Y}_0^0{=}({+}\infty)$. For any $n{\ge}1$, we have 
\begin{equation}\label{eqY0}
 \int_{0}^{Y^0_n} \Phi\left( \sum_{k{=}1}^{n} h\left(s{+}\sum_{i=k+1}^{n-1}Y^0_i \right)\right)\,\diff s=E_n,
\end{equation}
where $(E_n)$ is an i.i.d. sequence of exponential random variables with parameter $1$. 

\begin{proposition}\label{CrumpProp}
Under the condition $\alpha\beta{<}1$, the sequence $({\cal Y}_n^0)$ is converging in distribution, for the topology induced by $d$ of Relation~\eqref{DistS},  to a stationary sequence ${\cal Y}^\infty{=}(Y_1^\infty,Y_2^\infty,\ldots,Y_n^\infty,\ldots)$ such that
  \[
  \left(\int_0^{{Y}_{n}^\infty}\left(\Phi_{\beta}\left(\sum_{i=n}^{+\infty} h\left(u{+}\sum_{j=n+1}^{i}{Y}_{j}^\infty\right)\right)\diff u\right)\right),
  \]
    is an i.i.d. sequence of exponential random variables with parameter $1$ and
    \[
    \E\left(Y_1^\infty\right)=\frac{1{-}\alpha\beta}{\nu},
    \]
    furthermore, if 
    \[
(I_n)\steq{def} \left(\sum_{k=1}^{n} h\left(\sum_{i=2}^kY_i^n\right)\right) \text{ and } I\steq{def}  \sum_{k=1}^{{+}\infty} h\left(\sum_{i=2}^kY_i^\infty\right),
    \]
    the sequence $(I_n)$ converges in distribution to the integrable random variable $I$. 
\end{proposition}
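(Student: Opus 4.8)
Here is the proof strategy I would follow. The guiding idea is that, although the kernel $\mathcal{K}$ is not Feller and $\mathcal{S}_h$ is not complete, for an affine $\Phi_\beta$ the kernel is \emph{monotone}, and this replaces the positive‑recurrence arguments; the outcome is then fed to Propositions~\ref{InterPalmTheo} and~\ref{PalmMarkov}. Order $\mathcal{S}$ coordinatewise in $\R_+\cup\{+\infty\}$. If $x\le y$ the atoms of $m_x$ lie to the right of those of $m_y$, so --- $\Phi_\beta$ being non-decreasing and $h$ non-increasing --- the stochastic intensity built from $x$ dominates that built from $y$ at every forward time, hence $\mathcal{T}(x,E)\le\mathcal{T}(y,E)$ and, coupling through the same exponential, $\mathcal{X}_1^x\le\mathcal{X}_1^y$: the kernel $\mathcal{K}$ is non-decreasing, and the empty state $(+\infty)$ is the maximal element of $\mathcal{S}$. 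Thus $\mathcal{Y}_1^0\le\mathcal{Y}_0^0$, and applying one $\mathcal{K}$-step to both sides and iterating shows $(\mathcal{Y}_n^0)$ is stochastically decreasing, $\mathcal{Y}_{n+1}^0\preceq_{\mathrm{st}}\mathcal{Y}_n^0$. Being bounded below by the null sequence, a Strassen coupling realizes it as a coordinatewise non-increasing family $(\widetilde{\mathcal{Y}}_n)$ with $\widetilde{\mathcal{Y}}_n\downarrow\widetilde{\mathcal{Y}}_\infty$ a.s.; the $k$-th coordinate of $\mathcal{Y}_n^0$ is $Y^0_{n-k+1}$, a.s.\ finite for $n\ge k$ by Lemma~\ref{Lem1}, so $\widetilde{\mathcal{Y}}_\infty$ has all coordinates finite, the $+\infty$-block of $\mathcal{Y}_n^0$ recedes to index $+\infty$, and $\mathcal{Y}_n^0$ converges for $d$ to the law $\mathcal{Y}^\infty$ of $\widetilde{\mathcal{Y}}_\infty$. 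Since the shift $\sigma$ dropping the first coordinate is continuous and $\sigma(\mathcal{Y}_n^0)=\mathcal{Y}_{n-1}^0$ has the same limit, $\mathcal{Y}^\infty$ is stationary.

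Next I would bring in the stationary affine Hawkes process by the classical burn-in: driven by $\mathcal{P}$, let $\mathcal{N}^{(p)}$ be the affine Hawkes process started empty before $-p$; monotonicity in the initial configuration makes $p\mapsto\mathcal{N}^{(p)}$ non-decreasing, and a Grönwall bound $\E(\text{intensity})\le\nu/(1-\alpha\beta)<+\infty$ --- this is where $\alpha\beta<1$ enters --- shows $\mathcal{N}^{(p)}$ and its increasing limit $\mathcal{N}^*$ are point processes; $\mathcal{N}^*$ solves the Hawkes SDE on $\R$ and is stationary. Its Palm inter-arrival sequence $\mathcal{Y}^*$ lies in $\mathcal{S}_h$ a.s.\ (since $\E_{Q^*}(\int_{(-\infty,0)}h(-x)\mathcal{N}^*(\diff x))=\lambda^*\alpha<+\infty$ and the set where $\int_{(-\infty,t)}h(t-x)\mathcal{N}^*(\diff x)<+\infty$ for all $t$ is shift-invariant and of full $Q^*$-measure, hence of full $\widehat{Q}^*$-measure), and by Proposition~\ref{PalmMarkov} its law is $\mathcal{K}$-invariant. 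Running the $\mathcal{K}$-chain from the maximal state $(+\infty)$ and from an independent copy of $\mathcal{Y}^*$ with the same exponentials, monotonicity gives $\mathcal{Y}_n^0\succeq_{\mathrm{st}}\mathcal{Y}^*$ for every $n$, hence $\mathcal{Y}^\infty\succeq_{\mathrm{st}}\mathcal{Y}^*$; comparing on a Strassen coupling and using that $x\mapsto\sum_k h(\sum_{i\le k}x_i)$ is non-increasing gives $\sum_k h(\sum_{i\le k}Y_i^\infty)\le\sum_k h(\sum_{i\le k}Y_i^*)<+\infty$ a.s., so $\mathcal{Y}^\infty\in\mathcal{S}_h$ a.s.

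With $\mathcal{Y}^\infty\in\mathcal{S}_h$ a.s.\ I would return to the monotone coupling: the intensities $h(s)+\sum_{k\ge1}h(s+\sum_{i\le k}\widetilde Y^{(n)}_i)$ increase to the finite limit $h(s)+\sum_{k\ge1}h(s+\sum_{i\le k}\widetilde Y^\infty_i)$, so $\mathcal{T}(\widetilde{\mathcal{Y}}_n,E)\downarrow\mathcal{T}(\widetilde{\mathcal{Y}}_\infty,E)$ for an independent $E\sim\mathrm{Exp}(1)$ (the level-crossing converges because $\Phi_\beta\ge\nu>0$, as in Lemma~\ref{Lem1}); hence the law of $\mathcal{Y}^0_{n+1}$ --- one $\mathcal{K}$-step from $\mathcal{Y}^0_n$ --- converges to $\mathcal{Y}^\infty\mathcal{K}$, and since $\mathcal{Y}^0_{n+1}\Rightarrow\mathcal{Y}^\infty$ we get $\mathcal{Y}^\infty\mathcal{K}=\mathcal{Y}^\infty$. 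By Proposition~\ref{PalmMarkov} there is then a stationary Hawkes process associated with $\Phi_\beta$ whose Palm inter-arrival law is $\mathcal{Y}^\infty$, and a stationary version of the $\mathcal{K}$-chain satisfies the exponential relation~\eqref{InterPalm} (this is how Proposition~\ref{InterPalmTheo} is invoked), which for $\Phi_\beta$ is exactly the first displayed sequence of the statement. For the mean: this Hawkes process has intensity $\lambda=1/\E(Y_1^\infty)$, with $\E(Y_1^\infty)>0$ (because $\mathcal{Y}^\infty\in\mathcal{S}_h$ forces $\mathcal{Y}^\infty\not\equiv0$ a.s.) and $\E(Y_1^\infty)\le1/\nu$ (from $E_n\ge\nu Y_n^0$), so $\lambda\in(0,+\infty)$; stationarity and the Hawkes martingale property give $\lambda=\E(\Phi_\beta(\int_{(-\infty,0)}h(-x)\mathcal{N}(\diff x)))=\nu+\beta\lambda\alpha$ by Campbell's formula, hence $\lambda=\nu/(1-\alpha\beta)$ and $\E(Y_1^\infty)=(1-\alpha\beta)/\nu$. (Proposition~\ref{TnIneq}, which already ensures $\mathcal{N}_{\delta_0}$ is a genuine point process, also gives $\liminf_n\tfrac1n\sum_{k\le n}Y_k^0\ge(1-\alpha\beta)/\nu$, consistently.)

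Finally, on the monotone coupling $\widetilde I_n:=\sum_{k=1}^n h(\widetilde Y^{(n)}_2+\dots+\widetilde Y^{(n)}_k)$ is non-decreasing in $n$ --- more terms, each non-decreasing since the coordinates decrease --- with a.s.\ limit $\widetilde I=\sum_{k\ge1}h(\widetilde Y^\infty_2+\dots+\widetilde Y^\infty_k)$, distributed as $I$; hence $I_n\Rightarrow I$ and, by monotone convergence, $\E(I_n)\uparrow\E(I)$. Integrability of $I$ is the one genuinely quantitative point: from $\mathcal{Y}^\infty\succeq_{\mathrm{st}}\mathcal{Y}^*$ and the monotonicity of $x\mapsto\sum_k h(\sum_{i=2}^k x_i)$ one gets $\E(I)\le h(0)+\E_{\widehat Q^*}(\int_{(-\infty,0)}h(-x)\mathcal{N}^*(\diff x))$, and the right-hand side is finite precisely because $\alpha\beta<1$: the reduced second-moment/Palm intensity of the stationary affine Hawkes process solves a renewal equation driven by the kernel $\beta h$ of total mass $\alpha\beta<1$, so its excess over $\lambda^*$ lies in $L^1(\R_+)$ and integrates $h$ finitely. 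I expect this second-order $L^1$ estimate --- equivalently a uniform bound $\sup_n\E(I_n)<+\infty$ --- to be the main obstacle; everything else is soft, monotonicity of the affine kernel being the device that makes the argument work in the absence of the Feller property and of completeness.
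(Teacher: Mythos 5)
Your proposal reaches the right conclusions and its skeleton is sound, but it takes a genuinely different route from the paper at the two places where something quantitative has to be proved. For the convergence of $(\mathcal{Y}_n^0)$ you use forward iteration of the monotone kernel from the maximal state $({+}\infty)$ together with a Strassen coupling; the paper instead runs a backward (Loynes-type) coupling, feeding the exponentials in reverse order, which produces an almost surely coordinatewise non-increasing realization directly and avoids having to glue pairwise Strassen couplings into one consistent decreasing family --- a minor but real technical convenience. The substantive divergence is the integrability of $I$. The paper stays entirely inside its Markov-chain-on-sequences framework: Proposition~\ref{TnIneq} bounds the partial sums $\sum_{j\le i}\widetilde{Y}_j^n$ from below by a random walk $S_i$ with positive drift $(1{-}\alpha\beta)/\nu$, and a Tauberian theorem applied to the renewal measure of $(S_i)$ gives $\E(\mathcal{N}_+(0,x))\le D_1{+}D_2x$, hence $\E\bigl(\sum_i h(S_i^+)\bigr)\le D_1h(0){+}D_2\alpha<{+}\infty$; this single dominating random variable then also powers the dominated-convergence passage to the limit in the defining equation. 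You instead construct the stationary affine Hawkes process $\mathcal{N}^*$ by an independent burn-in argument, dominate $\mathcal{Y}^\infty$ stochastically by its Palm inter-arrival sequence, and bound $\E(I)$ by the Palm-expected intensity, whose finiteness you attribute to the $L^1$ solution of the second-order renewal equation with subcritical kernel $\beta h$. That route is legitimate and not circular (the burn-in construction does not presuppose the proposition), but it imports exactly the branching/cluster machinery the paper deliberately avoids, and the step you yourself flag as ``the main obstacle'' --- that the covariance density of the stationary linear Hawkes process is in $L^1\cap L^\infty$ so that $h$ integrates against the reduced Palm intensity --- is asserted rather than proved; carrying it out is comparable in length to the paper's Tauberian argument and requires second-moment estimates your burn-in bound does not yet provide. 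Finally, for $\E(Y_1^\infty)=(1{-}\alpha\beta)/\nu$ you use the Palm inversion $\lambda{=}1/\E(Y_1^\infty)$ together with Campbell's formula $\lambda{=}\nu{+}\alpha\beta\lambda$, whereas the paper integrates the limiting identity directly on the Palm space using a two-sided stationary extension and Fubini; both are correct, and yours is arguably shorter once the stationary process is in hand. In short: correct architecture, cleaner in places (the monotone convergence of $\widetilde{I}_n$ is a nice observation), but the decisive uniform bound $\sup_n\E(I_n)<{+}\infty$ is outsourced to a classical second-order fact that still needs a proof; the paper's random-walk lower bound plus Tauberian theorem is the self-contained way to close that gap.
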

The proposition gives in particular that  $Y_1^\infty{\steq{dist}}{\cal T}((Y_n^{\infty},n{\ge}2),E_1)$. The variable $\nu{+}\beta{I}$ is the intensity function of the stationary Hawkes process just after one of its point, i.e. for its Palm distribution.

As it can be seen, compared to ${\cal Y}^\infty$, the numbering of the coordinates of ${\cal Y}_n^0$ is reversed. This is mainly for a notational convenience in fact. The important fact is that the sums inside the function $\Phi$ in both cases are depending on the past. 
\begin{proof}
The proof is done in several steps. 

  \medskip
  \noindent
  {\sc Step~1}. Backward Coupling.
  
The idea is of using a backward coupling idea. See Chapter~22 of~\citet{Levin} for a general presentation and~\citet{Loynes} for an early use of this method.

 We define a sequence of random variables   $(\widetilde{\cal Y}_n^0){=}(\widetilde{Y}_1^n,\ldots,\widetilde{Y}_n^n,{+}\infty)$ by induction as follows, recall that $(E_n)$ is an i.i.d. of random variables with an exponential distribution with parameter $1$, 
  \[
 \begin{cases}
\widetilde{Y}_n^n&{=} {\cal T}(({+}\infty),E_{n}),\\
\widetilde{Y}_{n-k}^n&{=} {\cal T}\left(\left(\widetilde{Y}_{n-k+1}^n,\ldots,\widetilde{Y}_n^n,{+}\infty\right),E_{n-k}\right), \quad 1{\le}k{\le}n,
 \end{cases}
 \]
in particular,
 \[
 \widetilde{Y}_{1}^n{=} {\cal T}\left(\left(\widetilde{Y}_{2}^n,\ldots,\widetilde{Y}_n^n,{+}\infty\right),E_{1}\right).
 \]
 It is easily checked that
\begin{equation}\label{eqavx1}
\left(\widetilde{Y}_1^n,\ldots,\widetilde{Y}_n^n,{+}\infty\right)\steq{dist} \left(\widetilde{Y}_2^{n+1},\ldots,\widetilde{Y}_{n+1}^{n+1},{+}\infty\right)
\end{equation}
and that
\begin{equation}\label{eqavx2}
\widetilde{\cal Y}_n^0\steq{dist}{\cal Y}_n^0, \text{ since }  (E_1,E_2,\ldots,E_n)\steq{dist}  (E_n,E_{n-1},\ldots,E_1). 
\end{equation}
For any $n{\ge}1$ and $1{\le}k{\le}n$, we show that $\widetilde{Y}_k^{n+1}{\le}\widetilde{Y}_k^n$ holds.
This is done by induction. Since $h$ is non-negative and non-increasing, we have
\[
E_n= \int_0^{{\widetilde{Y}_{n}^n}}\left(\nu{+}\beta h\left(u\right)\right)\diff u\leq \int_0^{{\widetilde{Y}_{n}^n}}\left(\nu{+}\beta\left(h(u){+} h\left(u{+}\widetilde{Y}_{n+1}^{n+1}\right)\right)\right)\diff u,
\]
and, since,
\[
E_{n+1}=\int_0^{{\widetilde{Y}_{n}^n}}\left(\nu{+}\beta\left(h(u){+} h\left(u{+}\widetilde{Y}_{n+1}^{n+1}\right)\right)\right)\diff u,
\]
this implies that ${\widetilde{Y}_{n}^n}{\ge}\widetilde{Y}_{n}^{n+1}$ holds.
If the relation ${\widetilde{Y}_{i}^n}{\ge}\widetilde{Y}_{i}^{n+1}$ holds for any $k{+}1{\le}i{\le}n$, then 
\begin{multline}\label{eqavx5}
E_k{=} \int_0^{{\widetilde{Y}_{k}^n}}\hspace{-1mm}\left(\nu{+}\beta\sum_{i=k}^{n} h\left(u{+}\hspace{-2mm}\sum_{j=k+1}^{i}\widetilde{Y}_{j}^n\right)\right)\diff u\\ \leq \int_0^{{\widetilde{Y}_{k}^n}}\hspace{-1mm}\left(\nu{+}\beta\sum_{i=k}^{n} h\left(u{+}\hspace{-2mm}\sum_{j=k+1}^{i}\widetilde{Y}_{j}^{n+1}\right)\right)\diff u,
\end{multline}
which gives the relation  ${\widetilde{Y}_{k}^n}{\ge}\widetilde{Y}_{k}^{n+1}$. We can now define, for $k{\ge}1$, 
\[
\widetilde{Y}_{k}^\infty=\lim_{n\to+\infty}\widetilde{Y}_{k}^n.
\]
Relation~\eqref{eqavx1} shows that  the sequence $(\widetilde{Y}_{k}^\infty)$ is stationary and Relation~\eqref{eqavx2} gives the desired convergence in distribution for $({\cal Y}_n^0)$. 

\medskip
\noindent
    {\sc Step~2}. Convergence of the intensities $(I_n)$.
Proposition~\ref{TnIneq} shows that, for all $1{\le}i{\le}n$,
\begin{equation}\label{eqavx3}
\sum_{j=2}^{i}\widetilde{Y}_{j}^n\ge S_i\steq{def} \frac{1}{\nu}\sum_{j=2}^{i}\left(E_j{-}\alpha\beta\right).
\end{equation}
The assumption on $\alpha\beta$ shows that, almost surely $(S_i)$ is converging to infinity. If $a^+{=}\max(a,0)$, then
\[
\sum_{i=2}^{n} h\left(\sum_{j=2}^{i}\widetilde{Y}_{j}^n\right)\le \sum_{i=2}^{n} h\left(S_i^+\right). 
\]
Since $h$ is non-increasing converging to $0$ at infinity, there exists a non-negative finite Radon measure $\mu$ on $\R_+$ such that
$h(x){=}\mu((x,{+}\infty))$,
with Fubini's Theorem, we obtain
\begin{equation}\label{eqcl}
\E\left(\sum_{i=1}^{+\infty} h\left(S_i^+\right)\right)= \int_0^{+\infty}\E\left( {\cal N}_+(0,x)\right)\mu(\diff x),
\end{equation}
where ${\cal N}_+$ is the counting measure of the sequence $(S_n)$, i.e.  for $x{\ge}0$, 
\begin{equation}\label{Cdef}
{\cal N}_+(0,x)\steq{def} \sum_{i=2}^{+\infty}\ind{x{\ge} S_i}.
\end{equation}
The Laplace transform of ${\cal N}_+$ on $\R_+$ is given by, for $\xi{\ge}0$ sufficiently small so that $\nu\exp(\alpha\beta \xi/\nu){<}(\nu{+}\xi)$, we have
\[
\omega(\xi)\steq{def}\E\left(\sum_{n{\ge}1} e^{-\xi S_n}\right)=\frac{\nu{+}\xi}{\nu{+}\xi{-}\nu\exp(\alpha\beta\xi/\nu)}
\]
Hence, 
\[
\lim_{\xi\to 0}\xi\omega(\xi)=\frac{\nu}{1{-}\alpha\beta},
\]
A Tauberian theorem, see Theorem~2 of Chapter~XIII of~\citet{Feller},  implies that
\[
\lim_{x\to+\infty} \frac{\E({\cal N}_+(0,x))}{x}=\frac{\nu}{1{-}\alpha\beta},
\]
hence there exist $D_1$ and $D_2{\ge}0$ such that for all $x{\ge}0$, 
\begin{equation}\label{eqavx4}
\E({\cal N}_+(0,x))\leq D_1{+}D_2x.
\end{equation}
Hence, with Relation~\eqref{eqcl}
\begin{multline}\label{Cineq}
  \E\left(\sum_{i=1}^{+\infty} h\left(S_i^+\right)\right)\le D_1h(0){+} D_2\int_0^{+\infty}x\mu(\diff x)\\
  \le D_1h(0){+}D_2\int_0^{+\infty}\mu(x,{+}\infty)\diff x
  =D_1h(0){+}D_2\alpha,
\end{multline}
with the integration by parts formula.

For $p{\ge}1$ and $n{\ge}p$, 
\[
\sum_{i=p}^{n} h\left(\sum_{j=2}^{i}\widetilde{Y}_{j}^n\right)
\le  \sum_{i=p}^{n}h\left(S_i^+\right)\le \sum_{i=p}^{+\infty}h\left(S_i^+\right),
\]
this last term converges almost surely to $0$ as $p$ gets large, hence we obtain the almost sure convergence
\[
\lim_{n\to+\infty} \sum_{i=1}^{n} h\left(\sum_{j=2}^{i}\widetilde{Y}_{j}^n\right)=\sum_{i=1}^{+\infty} h\left(\sum_{j=2}^{i}\widetilde{Y}_{j}^\infty\right)
\le\sum_{i=1}^{+\infty} h\left(S_i^+\right)<{+}\infty, a.s.
\]
by Relations~\eqref{eqavx3} and~\eqref{eqavx4}.

\medskip
\noindent
    {\sc Step~3}.
     Relation~\eqref{eqavx5} with $k{=}1$ gives
    \[
 E_1{=}\int_0^{{\widetilde{Y}_{1}^1}}\ind{u{\le}{\widetilde{Y}_{1}^n}}\left(\nu{+}\beta\sum_{i=1}^{n} h\left(u{+}\hspace{-2mm}\sum_{j=2}^{i}\widetilde{Y}_{j}^n\right)\right)\diff u,
 \]
 since
\begin{multline*}
 \ind{u{\le}{\widetilde{Y}_{1}^n}}\left(\nu{+}\beta\sum_{i=1}^{n} h\left(u{+}\hspace{-2mm}\sum_{j=2}^{i}\widetilde{Y}_{j}^n\right)\right)
 \le \nu{+}\beta\sum_{i=1}^{n} h\left(\sum_{j=2}^{i}\widetilde{Y}_{j}^n\right)
\\ \le\nu{+}\beta \sum_{i=1}^{+\infty} h\left(S_i^+\right) <{+}\infty,
\end{multline*}
with the same argument as in the proof of the convergence $(I_n)$, and with Lebesgue's dominated convergence theorem, we obtain the relation
    \[
E_1{=} \int_0^{{\widetilde{Y}_{1}^\infty}}\hspace{-1mm}\left(\nu{+}\beta\sum_{i=1}^{+\infty} h\left(u{+}\sum_{j=2}^{i}\widetilde{Y}_{j}^\infty\right)\right)\diff u.
\]
With a  standard argument, via Kolmogorov's extension Theorem, see~Section~VII-38 of~\citet{Halmos},  there exists a stationary sequence $(\widetilde{Z}_{n}^\infty,n{\in}\Z)$ of random variables such that
$(\widetilde{Z}_{n}^\infty,n{\ge}p){\steq{dist}}(\widetilde{Y}_{n}^\infty,n{\ge}1)$, holds for all $p{\in}\Z$. 
After integration of the last relation, with a change of variable,  we obtain
\[
1{-}\nu\E\left(Y_1^{\infty}\right)=\beta\int_0^{+\infty} h\left(u\right)\sum_{i=1}^{+\infty}\P\left(\sum_{j=2}^{i}\widetilde{Y}_{j}^\infty \le u\le \sum_{j=1}^{i}\widetilde{Y}_{j}^\infty\right)\diff u.
\]
For $i{\ge}1$, we have
\begin{multline*}
\P\left(\sum_{j=2}^{i}\widetilde{Y}_{j}^\infty \le u\le \sum_{j=1}^{i}\widetilde{Y}_{j}^\infty\right)=
\P\left(\sum_{j=-i+1}^{-1}\widetilde{Z}_{j}^\infty \le u\le \sum_{j=-i}^{-1}\widetilde{Z}_{j}^\infty\right)\\=
\P\left(\sum_{j=1}^{i-1}\widetilde{Z}_{-j}^\infty \le u\le \sum_{j=1}^{i}\widetilde{Z}_{-j}^\infty\right),
\end{multline*}
hence
\[
  1{-}\nu\E\left(Y_1^{\infty}\right)
  =  \beta\P\left(\int_0^{+\infty} h\left(u\right)\sum_{i=1}^{+\infty}\ind{\sum_{j=1}^{i-1}\widetilde{Z}_{-j}^\infty \le u\le \sum_{j=1}^{i}\widetilde{Z}_{-j}^\infty}\right) \diff u
=\beta\alpha,
\]
by Fubini's Theorem. The proposition is proved. 
\end{proof}
\begin{proposition}
The Markov chain with transition kernel ${\cal K}$ on ${\cal S}_h$ has a unique invariant measure.
\end{proposition}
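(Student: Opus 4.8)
The existence of an invariant measure on ${\cal S}_h$ is already established: Proposition~\ref{CrumpProp} produces the stationary sequence ${\cal Y}^\infty$, which lies in ${\cal S}_h$ almost surely by Step~2 of its proof, and Proposition~\ref{PalmMarkov} identifies it with an invariant measure of ${\cal K}$. So the new content is \emph{uniqueness}, and the plan is to prove that every invariant measure on ${\cal S}_h$ is the law of ${\cal Y}^\infty$, by combining the backward coupling of Proposition~\ref{CrumpProp} with a monotonicity (comparison) property available because $\Phi_\beta$ is non-decreasing.

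The first step is a comparison principle for the kernel ${\cal K}$ with $\Phi{=}\Phi_\beta$: if $m_A{\le}m_B$ in ${\cal M}_p(\R_-)$, both with an atom at $0$ and with $(h({-}u)){\in}L_1(m_B)$, and if $({\cal X}^A_n)$ and $({\cal X}^B_n)$ are the chains produced by Relation~\eqref{eqX0} from the \emph{same} i.i.d.\ sequence $(E_n)$, then $X^A_k{\ge}X^B_k$ for all $k{\ge}1$. This is shown by induction: granted $X^A_i{\ge}X^B_i$ for $i{\le}k$, the stochastic intensity of the $A$-chain just after its $k$-th point,
\[
t\longmapsto \int_{\R_-} h\Bigl(\textstyle\sum_{i=1}^kX^A_i{+}t{-}u\Bigr)m_A(\diff u)+\sum_{j=1}^k h\Bigl(t{+}\textstyle\sum_{i=j+1}^kX^A_i\Bigr),
\]
is pointwise dominated by the corresponding intensity of the $B$-chain, since $h$ is non-increasing, $m_A{\le}m_B$, and $\sum_{i=1}^kX^A_i{\ge}\sum_{i=1}^kX^B_i$; as $\Phi_\beta$ is non-decreasing and positive, Relation~\eqref{eqX0} at index $k{+}1$ then forces $X^A_{k+1}{\ge}X^B_{k+1}$ (the case $k{=}0$ being the same comparison of $\int h(t{-}u)m_A(\diff u)$ with $\int h(t{-}u)m_B(\diff u)$).

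Now fix an invariant measure $\pi$ on ${\cal S}_h$ and realise it as the time-$0$ marginal of a two-sided stationary chain $({\cal X}_n,n{\in}\Z)$ with kernel ${\cal K}$, driven by an i.i.d.\ family $(E_n,n{\in}\Z)$ of parameter-$1$ exponential variables with $E_{n+1}$ independent of $({\cal X}_m:m{\le}n)$. For each $N{\ge}1$, run in addition the empty-state chain started at time $-N$ from the \emph{same} innovations $(E_{-N+1},\dots,E_0)$; by Step~1 of Proposition~\ref{CrumpProp} its value at time $0$, say ${\cal Y}^{(N)}$, decreases coordinatewise in $N$ to a limit ${\cal Y}^{(\infty)}$ whose law is that of ${\cal Y}^\infty$. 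Since $m_{{\cal X}_{-N}}$ is a point measure dominating $\delta_0$, the comparison principle (applied conditionally on ${\cal X}_{-N}$, which is independent of $E_{-N+1},\dots,E_0$) gives ${\cal X}_0{\le}{\cal Y}^{(N)}$ coordinatewise for every $N$, hence ${\cal X}_0{\le}{\cal Y}^{(\infty)}$ coordinatewise, almost surely.

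It remains to turn this domination into an equality, which I would do via first moments. For any stationary Hawkes process with activation $\Phi_\beta$, taking expectations in the SDE~\eqref{HSDE} and using the Campbell formula $\E\bigl(\int f\,\diff{\cal N}\bigr){=}\lambda\int f$ for its intensity $\lambda{=}\E({\cal N}((0,1]))$ yields $\lambda{=}\nu{+}\alpha\beta\lambda$, i.e.\ $\lambda{=}\nu/(1{-}\alpha\beta)$, so by the Palm inversion formula the mean inter-arrival under the Palm measure equals $(1{-}\alpha\beta)/\nu$. Applying this both to ${\cal Y}^\infty$ and to the stationary Hawkes process associated with $\pi$, and using stationarity of the two inter-arrival sequences, $\E({\cal X}_0[k]){=}(1{-}\alpha\beta)/\nu{=}\E({\cal Y}^\infty[k])$ for all $k$; combined with ${\cal X}_0[k]{\le}{\cal Y}^{(\infty)}[k]$ and $\E({\cal Y}^{(\infty)}[k]){=}\E({\cal Y}^\infty[k])$, this forces ${\cal X}_0[k]{=}{\cal Y}^{(\infty)}[k]$ a.s.\ for every $k$, so ${\cal X}_0{=}{\cal Y}^{(\infty)}$ a.s.\ and $\pi$ is the law of ${\cal Y}^\infty$. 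I expect the comparison principle --- specifically, verifying that the ordering of inter-arrivals genuinely propagates at the level of the stochastic intensities, despite the larger inter-arrivals of the $A$-chain pushing its past further back --- to be the delicate point; granted it, the backward coupling of Proposition~\ref{CrumpProp} and the intensity computation finish the argument cleanly.
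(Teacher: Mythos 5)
Your proposal is correct and follows essentially the same route as the paper's proof: a monotone coupling of the stationary chain with the empty-state chain driven by the same exponential innovations, the backward-coupling limit of Proposition~\ref{CrumpProp} to identify the dominating law as that of ${\cal Y}^\infty$, and the equality of means $\E(Z_1){=}(1{-}\alpha\beta)/\nu$ to upgrade the almost-sure coordinatewise domination to equality in distribution. The only cosmetic difference is that you compute the mean inter-arrival via Campbell's formula and Palm inversion for the associated stationary Hawkes process, whereas the paper integrates the defining relation directly as in Step~3 of the proof of Proposition~\ref{CrumpProp}.
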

\begin{proof}
Let $\Lambda$ a probability distribution on ${\cal S}_h$ invariant by ${\cal K}$, and ${\cal Z}{=}(Z_{-p},p{\ge}0)$ a random variable on ${\cal S}$ with distribution $\Lambda$. We construct the sample paths of Markov chains starting $({\cal Z}_n){=}(Z_{n},Z_{n-1},\ldots,Z_1){\cdot}(Z_{-p},p{\ge}0)$ and $({\cal Y}_n^0)$, from initial states ${\cal Z}$ and $({+}\infty)$ respectively, in the following way, for $n{\ge}1$, 
  \[
  \int_0^{Z_{n}}\left(\nu{+}\beta\left(\sum_{i=0}^{n}h\left(s{+}\sum_{j=i}^{n-1} Z_{j}\right){+} \sum_{i=0}^{+\infty}h\left(s{+}\sum_{j=0}^{i} Z_{-j}\right)\right)\right)\diff s =E_{n},
  \]
  and
  \[
  \int_0^{Y_{n}}\left(\nu{+}\beta \sum_{i=0}^{n}h\left(s{+}\sum_{j=i}^{n-1} Y_{j}\right)\right)\diff s =E_{n}.
  \]
  It is easily seen by induction that, almost surely, for all $n{\ge}1$, the relation $Z_n{\le}Y_n^0$ holds, hence, since ${\cal Z}_n{\steq{dist}}{\cal Z}$, for any  $k{\ge}1$ and $n{\ge}k$,
  \[
  (Z_0,\ldots ,Z_{-k})\le_{\rm st} (Y_{n}^0,\ldots ,Y_{n-k}^0),
  \]
  where $\le_{st}$ denotes the stochastic order with respect to the order of the coordinates. Proposition~\ref{CrumpProp} gives therefore
    \[
  (Z_0,\ldots ,Z_{-k})\le_{\rm st} (Y_{0}^\infty,\ldots ,Y_{k}^\infty).
    \]
 Since $(Z_n)$ is invariant for ${\cal K}$, the relation ${\cal T}({\cal Z},E_0){\steq{dist}}Z_1$ holds and,  as in the proof of Proposition~\ref{CrumpProp} the relation $\E(Z_1){=}(1{-}\alpha\beta)/\nu$  also holds, and therefore, for all $k{\ge}1$,  
    \[
  (Z_0,\ldots ,Z_{k})\steq{dist} (Y_{0}^\infty,\ldots ,Y_{k}^\infty).
    \]
The proposition is proved. 
\end{proof}

\section{Stationary Hawkes Processes}\label{ExisSec}
We start with a coupling result between a general non-linear Hawkes process and the Hawkes process of~\citet{hawkes_cluster_1974}. The idea of the coupling with the ``classical'' Hawkes process is also mentioned in~\citet{karabash_stability_2012} but in a space of functions. The existence result of this reference seems to be incomplete since the topology used is not clearly defined and an important continuity property, the Feller property in fact,  is apparently missing. This is the main technical difficulty of the proof of Theorem~\ref{ExistenceTheo}. 
\subsection{A Monotonicity Property}\label{MonSec}
Throughout this section, $(E_n)$ denotes an i.i.d. sequence of exponential random variables with parameter $1$.

Let $m$ be a point measure on $\R_-$, with Definition~\ref{MarkDef}, the sequences $(X^m_n,n{\ge}1)$ and  $(Y^m_n,n{\ge}1)$ are defined by induction by, for $n{\ge}1$,
\begin{align}
\int_0^{X^m_n} \Phi\left(\int_{-\infty}^0 h\left(s{+}\sum_{i=1}^{n-1} X^m_i{-}x\right)m(\diff x){+}\sum_{k{=}0}^{n-1} h\left(s{+}\hspace{-3mm}\sum_{i=n-k}^{n-1} X^m_i\right)\right)\,\diff s=E_n,\label{X0}\\
\int_0^{Y^m_n}  \Phi_{\beta_0}\left(\int_{-\infty}^0 h\left(s{+}\sum_{i=1}^{n-1} Y^m_i{-}x\right)m(\diff x){+}\sum_{k{=}0}^{n-1} h\left(s{+}\hspace{-3mm}\sum_{i=n-k}^{n-1} Y^m_i\right)\right)\,\diff s=E_n, \label{Y0}
\end{align}
where $\beta_0$ is, for the moment, a positive constant and $\Phi_{\beta_0}(x){\steq{def}}\nu{+}\beta_0x$ for $x{\in}\R$.

The affine function $\Phi_{\beta_0}$  will be used as follows. If $\beta$ defined by Relation~\eqref{beta}  is such that $\alpha\beta{<}1$, then there exists some $\beta_0{>}\beta$ such that $\alpha\beta_0{<}1$ and  a constant $\nu$ such that

\begin{proposition}[Coupling]\label{propmono}
If $\nu$ and $\beta_0{>}0$ are such that  the relation $\Phi(x){\le}\Phi_{\beta_0}(x)$, for all $x{\ge}0$, with $\Phi_{\beta_0}$ defined by Relation~\eqref{phib}, and if $m$ be a point measure on $\R_-$ with
\[
\int_{-\infty}^0 h({-}x)m(\diff x){<}{+}\infty,
\]
then, for  $\beta_0 {>} \beta$ there exists $\nu > 0$ such that, almost surely,  the sequence $(Y_n^m)$ defined by Relation~\eqref{Y0}  satisfies the relation
$Y_n^m {\le}X_n^m$,  for  all $n{\ge}1$.
\end{proposition}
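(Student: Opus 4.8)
The plan is to establish $Y_n^m{\le}X_n^m$ pathwise, by induction on $n$, exploiting that \eqref{X0} and \eqref{Y0} are driven by the \emph{same} i.i.d.\ sequence $(E_n)$, that $h$ is non-increasing, and that $\Phi_{\beta_0}$ is non-decreasing with $\Phi{\le}\Phi_{\beta_0}$. As a preliminary I would note that the pair $(\nu,\beta_0)$ in the statement does exist: by \eqref{beta}, for any $\beta_0{>}\beta$ one has $\Phi(t){\le}\beta_0 t$ for all $t$ large enough, and since $\Phi$ is continuous it is bounded on the remaining compact interval, so $\Phi(x){\le}\nu{+}\beta_0 x{=}\Phi_{\beta_0}(x)$ for every $x{\ge}0$ provided $\nu$ is taken large enough; when moreover $\alpha\beta{<}1$ one may additionally keep $\beta_0{<}1/\alpha$. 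From now on $(\nu,\beta_0)$ is fixed with $\Phi{\le}\Phi_{\beta_0}$ on $\R_+$.

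For the induction itself, I would introduce, for $z{=}(z_i){\in}{\cal S}$, $s{\ge}0$ and $n{\ge}1$,
\[
A_n(s,z)\steq{def}\int_{-\infty}^0 h\Big(s{+}\sum_{i=1}^{n-1} z_i{-}x\Big)m(\diff x)+\sum_{k=0}^{n-1} h\Big(s{+}\sum_{i=n-k}^{n-1} z_i\Big),
\]
so that $X_n^m$ is the first time $t$ at which $t\mapsto\int_0^t\Phi\big(A_n(s,(X_i^m))\big)\diff s$ reaches the level $E_n$, and $Y_n^m$ the first time $t$ at which $t\mapsto\int_0^t\Phi_{\beta_0}\big(A_n(s,(Y_i^m))\big)\diff s$ reaches $E_n$, with the convention $\inf\emptyset{=}{+}\infty$. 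For $n{=}1$ the list of $z_i$'s is empty, so both clocks involve the \emph{same} function $A_1(s)$, and since $\Phi{\le}\Phi_{\beta_0}$ the $\Phi_{\beta_0}$-clock reaches $E_1$ no later than the $\Phi$-clock, i.e.\ $Y_1^m{\le}X_1^m$. Assuming $Y_i^m{\le}X_i^m$ for all $i{<}n$, all partial sums satisfy $\sum_{i=n-k}^{n-1}Y_i^m{\le}\sum_{i=n-k}^{n-1}X_i^m$ and $\sum_{i=1}^{n-1}Y_i^m{\le}\sum_{i=1}^{n-1}X_i^m$, so the monotonicity of $h$ gives $A_n(s,(Y_i^m)){\ge}A_n(s,(X_i^m))$ for every $s{\ge}0$; since $\Phi_{\beta_0}$ is non-decreasing and $\Phi{\le}\Phi_{\beta_0}$,
\[
\Phi_{\beta_0}\big(A_n(s,(Y_i^m))\big)\ge\Phi_{\beta_0}\big(A_n(s,(X_i^m))\big)\ge\Phi\big(A_n(s,(X_i^m))\big),
\]
and integrating in $s$ shows that the $\Phi_{\beta_0}$-clock for $Y_n^m$ reaches $E_n$ no later than the $\Phi$-clock for $X_n^m$, which is exactly $Y_n^m{\le}X_n^m$ and closes the induction.

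The only delicate point — and the ``hard part'', such as it is — is the bookkeeping around the value $+\infty$: one has to check that the term-by-term comparison $A_n(s,(Y_i^m)){\ge}A_n(s,(X_i^m))$ remains valid when some coordinates are $+\infty$, using the convention $h({+}\infty){=}0$ and the structure of ${\cal S}$ (once a coordinate equals $+\infty$, so do all later ones), and to observe that the inequality is vacuous whenever $X_n^m{=}{+}\infty$. I would also record — though it is not needed for the inequality — that $Y_n^m{<}{+}\infty$ almost surely: since $\Phi_{\beta_0}(0){=}\nu{>}0$ and $m$ satisfies the stated integrability hypothesis, the associated sequence lies in ${\cal S}_h$ and Lemma~\ref{Lem1} applies to \eqref{Y0}. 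Beyond this bookkeeping there is no analytic difficulty: the entire content of the statement is that using a common driving sequence $(E_n)$ converts ``$\Phi{\le}\Phi_{\beta_0}$ together with $h$ non-increasing'' into a pathwise comparison.
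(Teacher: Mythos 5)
Your proof is correct and follows essentially the same route as the paper's: induction on $n$, with the shared driving sequence $(E_n)$, the chain of inequalities $\Phi(A_n(s,(X_i)))\le\Phi_{\beta_0}(A_n(s,(X_i)))\le\Phi_{\beta_0}(A_n(s,(Y_i)))$ obtained from $\Phi\le\Phi_{\beta_0}$, the monotonicity of $h$, and the monotonicity of $\Phi_{\beta_0}$. The added remarks on the existence of $(\nu,\beta_0)$ and on the $+\infty$ bookkeeping are sensible but do not change the argument.
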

\begin{proof}
The proof is  straightforward. If, by induction,  the relation holds up to index $n{-}1{>}0$. For $t{\ge}0$,
\begin{align*}
\int_0^{t} &\Phi\left(\int_{-\infty}^0 h\left(s{+}\sum_{i=1}^{n-1} X^m_i{-}x\right)m(\diff x){+}\sum_{k{=}0}^{n-1} h\left(s{+}\sum_{i=n-k}^{n-1} X^m_i\right)\right)\,\diff s\\
&\le     \int_0^{t}\Phi_{\beta_0}\left(\int_{-\infty}^0 h\left(s{+}\sum_{i=1}^{n-1} X^m_i{-}x\right)m(\diff x){+}\sum_{k{=}0}^{n-1} h\left(s{+}\sum_{i=n-k}^{n-1} X^m_i\right)\right)\,\diff s\\
&\le     \int_0^{t}\Phi_{\beta_0}\left(\int_{-\infty}^0 h\left(s{+}\sum_{i=1}^{n-1} Y^m_i{-}x\right)m(\diff x){+}\sum_{k{=}0}^{n-1} h\left(s{+}\sum_{i=n-k}^{n-1} Y^m_i\right)\right)\,\diff s,
\end{align*}
by the monotonicity properties of $h$ and $\Phi_{\beta_0}$. Hence we deduce that  the relation $Y^m_n{\le}X^m_n$ holds.
\end{proof}

\subsection{Existence Theorem}
We can now formulate our main existence result. 
\begin{theorem}\label{ExistenceTheo}
If the activation function function $\Phi$ is continuous with  $\Phi(0){>}0 $ and
the quantities
  \[
\alpha =\int_0^{+\infty} h(t) \diff t \text{ and }        \beta = \limsup_{y\to+\infty}\frac{\Phi(y)}{y}
\]
are such that $\alpha\beta{<}1$, then there exists a stationary Hawkes  point process. 
\end{theorem}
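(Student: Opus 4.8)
The plan is to exhibit a stationary, integrable sequence $(\tau_n,n{\in}\Z)$ of inter-arrival times satisfying Relation~\eqref{InterPalm}; the assertion then follows from the converse part of Proposition~\ref{InterPalmTheo} (equivalently, by Proposition~\ref{PalmMarkov}, one produces an invariant distribution of the kernel ${\cal K}$ carried by ${\cal S}_h$). The natural candidate is a weak limit point of the laws of the Markov chain $({\cal X}_n^0)$ started from the empty state $({+}\infty)$. The textbook route --- tightness together with the Feller property and a Krylov--Bogolyubov argument --- is unavailable here since ${\cal K}$ is not Feller and ${\cal S}_h$ is not complete; the idea is to compensate for the missing continuity by a monotone pathwise coupling with the subcritical linear Hawkes process of Section~\ref{SecCrump}, whose stationary regime is controlled by Proposition~\ref{CrumpProp}.

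First I would fix the linear envelope: since $\alpha\beta{<}1$ and $\Phi$ is continuous with $\limsup_{y\to+\infty}\Phi(y)/y{=}\beta$, one can pick $\beta_0{\in}(\beta,1/\alpha)$ and $\nu{>}0$ with $\Phi(x){\le}\Phi_{\beta_0}(x){=}\nu{+}\beta_0 x$ for all $x{\ge}0$, so that $\alpha\beta_0{<}1$. From a single i.i.d.\ exponential sequence $(E_k)$ I would run \emph{in parallel} the Loynes-type backward recursions of the proof of Proposition~\ref{CrumpProp}, once for the activation function $\Phi$ and once for $\Phi_{\beta_0}$, obtaining $\widetilde{\cal X}_n^0{=}(\widetilde X_1^n,\ldots,\widetilde X_n^n,{+}\infty)\steq{dist}{\cal X}_n^0$ and, likewise, $\widetilde{\cal Y}_n^0\steq{dist}{\cal Y}_n^0$. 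Replaying the monotonicity argument of Proposition~\ref{propmono} along this backward recursion gives $\widetilde Y_k^n{\le}\widetilde X_k^n$ for $1{\le}k{\le}n$; combined with Relation~\eqref{eqavx3} this yields $\sum_{i=2}^k\widetilde X_i^n{\ge}S_k$ with $S_k\steq{def}\frac{1}{\nu}\sum_{j=2}^k(E_j{-}\alpha\beta_0)$, hence the \emph{uniform} almost sure domination
\[
\sum_{k\ge 1}h\Big(\sum_{i=2}^{k}\widetilde X_i^n\Big)\ \le\ {\cal I}\steq{def}\sum_{k\ge 1}h\big(S_k^+\big),\qquad n\ge 1,
\]
where $\E({\cal I}){<}{+}\infty$ by estimate~\eqref{Cineq}. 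This single inequality is meant to stand in for the Feller property in the limiting argument.

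Next I would extract a limit. The product space $([0,{+}\infty])^{\N\setminus\{0\}}$ is compact, so the laws of $\widetilde{\cal X}_n^0$ are tight; along a subsequence $(n_j)$ they converge, and, applying Skorokhod's theorem to the pair consisting of $\widetilde{\cal X}_{n_j}^0$ and the (fixed) sequence $(E_k)$, I may assume $\widetilde X_k^{n_j}\to X_k^\infty$ a.s.\ for every $k$, for some ${\cal X}^\infty{=}(X_k^\infty)$. One then checks: (i) ${\cal X}^\infty$ is stationary --- pass the analogue of identity~\eqref{eqavx1} to the limit and apply Kolmogorov's extension theorem, producing a two-sided stationary sequence $(\tau_n,n{\in}\Z)$ with $(\tau_n,n{\ge}1)\steq{dist}{\cal X}^\infty$; (ii) ${\cal X}^\infty{\in}{\cal S}_h$ a.s.\ --- Fatou plus the displayed bound; and (iii) $(\tau_n)$ satisfies Relation~\eqref{InterPalm} --- pass to the limit in $E_k{=}\int_0^{\widetilde X_k^n}\Phi(\sum_{i=k}^n h(u{+}\sum_{j=k+1}^i\widetilde X_j^n))\,\diff u$ using the continuity of $\Phi$, dominated convergence for the inner sums (dominated, uniformly in $n$ and $u$, by a shift of the displayed quantity), and dominated convergence for the outer integral (the integrand is bounded by $\sup_{[0,h(0)+{\cal I}]}\Phi$), noting that $\Phi(0){>}0$ forces $u{\mapsto}\int_0^u\Phi(\cdots)\,\diff s$ to diverge, so that $X_k^\infty{<}{+}\infty$. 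The converse part of Proposition~\ref{InterPalmTheo} then delivers the stationary Hawkes process.

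The step I expect to be the main obstacle --- besides setting up the coupling itself, which is what makes everything else go through --- is point (iii)'s companion fact that the limiting sequence is \emph{integrable}, $\E(X_1^\infty){<}{+}\infty$, rather than merely a.s.\ finite. This is essential: without it Proposition~\ref{InterPalmTheo} does not apply, and a degenerate (empty) limit would be vacuous, since when $\Phi(0){>}0$ the null measure is never a stationary Hawkes process. The mechanism is that $\Phi(0){>}0$ keeps the stochastic intensity bounded away from $0$ whenever the intensity functional is small: if $\Phi{\ge}c_0{>}0$ on $[0,\delta_0]$, then, because $u{\mapsto}\sum_{i\ge 1}h(u{+}\sum_{j=2}^iX_j^\infty)$ is non-increasing, tends to $0$, and is dominated by $h(u){+}\sum_k h((u{+}S_k)^+)$, it is ${\le}\delta_0$ past a finite random time $\sigma^*$, whence $X_1^\infty{\le}\sigma^*{+}E_1/c_0$; bounding $\E(\sigma^*)$ then requires controlling the tail of $\sum_k h((u{+}S_k)^+)$ for large $u$, for which estimate~\eqref{Cineq} --- together with the exponential control of the renewal-type counting function of the positive-drift walk $(S_k)$ --- is the relevant input. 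Keeping all these limit exchanges legitimate with only this crude compactness of the state space, and in particular nailing down this integrability, is where the real work concentrates.
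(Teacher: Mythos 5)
Your overall strategy coincides with the paper's: dominate the non-linear chain by the subcritical affine chain of Section~\ref{SecCrump} via Proposition~\ref{propmono}, use the resulting bound $\sum_k h(\sum_i \widetilde X_i^n)\le \sum_k h(S_k^+)$ with $\E(\sum_k h(S_k^+))<\infty$ as a substitute for the missing Feller property, extract a limit law on the compact space $\overline{\R}_+^{\N}$, and pass the defining equation~\eqref{eqX0} to the limit by a truncation/uniform-integrability argument before invoking Proposition~\ref{InterPalmTheo}. Steps (ii) and (iii) of your outline, and your closing discussion of why $\Phi(0){>}0$ prevents a degenerate limit, are all consistent with what the paper does in its Steps~0--3.

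There is, however, one genuine gap: the stationarity of your limit, point~(i). You take a subsequential weak limit of the laws of $\widetilde{\cal X}_{n}^0$ themselves and claim shift-invariance by ``passing the analogue of identity~\eqref{eqavx1} to the limit''. That identity only says that the law of $\widetilde{\cal X}_{n}^0$ equals the shift of the law of $\widetilde{\cal X}_{n+1}^0$; along a subsequence $(n_j)$ the limits of the laws at times $n_j$ and $n_j{+}1$ need not coincide, so no shift-invariance follows. In the affine case this is rescued by the Loynes monotonicity $\widetilde Y_k^{n+1}\le\widetilde Y_k^{n}$, which upgrades subsequential to genuine almost-sure convergence; that monotonicity is \emph{not} available for a general continuous $\Phi$, which need not be non-decreasing, and indeed your proposal (correctly) does not claim it. The paper circumvents exactly this obstacle by working with the Ces\`aro averages
\[
Q_n(F)=\frac{1}{n}\sum_{i=1}^n \E\left(F\left(X_i^0,X_{i-1}^0,\ldots\right)\right),
\]
\`a la Krylov--Bogolyubov: any weak limit point of $(Q_n)$ is automatically invariant under the shift, and the remaining work (your steps (ii)--(iii), the paper's Steps~1--3) is to show it is also invariant for the kernel ${\cal K}$ despite the lack of continuity. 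Your argument can be repaired by averaging in the same way — the domination by $\sum_k h(S_k^+)$ survives the averaging unchanged — but as written the stationarity claim does not hold.
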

\begin{proof}
The assumptions imply that there exist $\nu{>}0$ and $\beta_0{\ge}0$ such that  $\alpha\beta_0{<}1$ and $\Phi(x){\le}\Phi_{\beta_0}(x)$ for all $x{\ge}0$, where $\Phi_{\beta_0}$ is defined by Relation~\eqref{phib}.

The Markov chain $({\cal X}_n^0)$, resp. $({\cal Y}_n^0)$, associated to $\Phi$, resp. to $\Phi_{\beta_0}$,  with initial point $({+}\infty)$, is denoted by
\[
{\cal X}_n^0{=}(X_n^0,X_{n-1}^0,\ldots,X_1^0,{+}\infty) \text{ and }
{\cal Y}_n^0{=}(Y_n^0,Y_{n-1}^0,\ldots,Y_1^0,{+}\infty), 
\]
see Definition~\ref{MarkDef}.

We proceed  in the spirit of~\citet{Kryloff_Theorie_1937}, let $\overline{\R}_+{=}\R_+{\cup}\{{+}\infty\}$ and $Q_n$ the distribution on $\overline{\R}_+^{\N}$ given  by
\[
Q_n(F)=\frac{1}{n}\sum_{i=1}^n \E\left(F(X_i^0,X_{i-1}^0,\ldots,X_{i-k+1}^0)\right),
\]
if $F$ is a Borelian function on  $\overline{\R}_+^{\N}$  depending only on the first $k$ coordinates, with the convention that $X^0_i{=}{+}\infty$ for $i{\le}0$. The space $\overline{\R}_+^{\N}$  is endowed with the topology defined by the distance~\eqref{DistS}. 

The sequence of probability distributions $(Q_n)$ on the compact space $\overline{\R}_+^{\N}$ is clearly tight. Let $(Q_{n_p})$ be convergent sequence whose limit is $Q_\infty$. We now prove that $Q_\infty$ is an invariant distribution for the Markov chain $({\cal X}_n^0)$.  Due to definition of $(Q_n)$, it is clear that the probability $Q_\infty$ on $\overline{\R}_+^\N$ is invariant by the shift operator $(x_i){\to}(x_{i+1})$.

Let ${\cal Z}_p{=}(Z_1^p,Z_2^p,\ldots)$ be a random variable on $\overline{\R}_+^{\N}$ with distribution $Q_{n_p}$  and $E_0$ an independent exponential random variable with parameter $1$, ${\cal Z}_\infty{=}(Z_1^\infty,Z_2^\infty,\ldots)$ is  a random variable with distribution $Q_\infty$.

If $F_1$ is an exponential random variable independent of ${\cal Z}_p$,  by definition of $(Q_{n_p})$ we have, for the convergence in distribution,
\[
\lim_{p\to+\infty} ({\cal T}({\cal Z}_p,F_1),Z_1^p,Z_2^p,\ldots)= {\cal Z}_\infty,
\]
If we  prove  that, for the convergence in distribution,
\begin{equation}\label{eqCVEqui}
\lim_{p\to+\infty} {\cal T}({\cal Z}_p,E_0){=}{\cal T}({\cal Z}_\infty,E_0),
\end{equation}
the probability distribution $Q_\infty$ will be an invariant distribution for the Markov chain $({\cal X}_n)$ and therefore that there exists a stationary Hawkes process by Proposition~\ref{InterPalmTheo}.

We fix $T{>}0$, $p{\ge}1$, we have, by definition of $Q_{n_p}$
\begin{equation}\label{TZeq}
\P({\cal T}({\cal Z}_p,E_0){\ge}T)=
\frac{1}{n_p}\sum_{i=1}^{n_p}\E\left(\exp\left({-}\int_0^{T}
\Phi\left(\sum_{k{=}0}^{i} h\left(s{+}\hspace{-3mm}\sum_{j=i+1-k}^{i}\hspace{-2mm} X^0_j\right)\right)\diff s\right)\right). 
\end{equation}
Our proof is carried out in four steps.

\bigskip
\noindent{\sc Step 0}: an integrability property of ${\cal Z}_\infty$.\\
We use the notations of Section~\ref{MonSec}.  With Proposition~\ref{propmono}, we can construct a coupling such that $Y^0_n{\le}X^0_n$ holds almost surely for all $n{\ge}K{>}0$.
\[
\sum_{k=0}^{K}h\left(\sum_{j=i+1-k}^i X^0_{j}\right)\le \sum_{k=0}^{i}h\left(\sum_{j=i+1-k}^i Y^0_{j}\right),
\]
we get,
\[
\frac{1}{n_p}\sum_{i=K}^{n_p} \sum_{k=0}^{K}h\left(\sum_{j=i+1-k}^i X^0_{j}\right)\le
\frac{1}{n_p}\sum_{i=K}^{n_p}\sum_{k=0}^{i}h\left(\sum_{j=i+1-k}^i Y^0_{j}\right),
\]
and, by taking the expected value, this gives the relation
\[
\frac{1}{n_p}\sum_{i=K}^{n_p} \E\left(\sum_{k=0}^{K}h\left(\sum_{j=i+1-k}^i X^0_{j}\right)\right)\le
\frac{1}{n_p}\sum_{i=K}^{n_p}\E\left(\sum_{k=0}^{i}h\left(\sum_{j=i+1-k}^i Y^0_{j}\right)\right). 
\]
As $p$ goes to infinity,  the right-hand side is converging to $E(I)$, where $I$ is the random variable  defined in Proposition~\ref{CrumpProp}. The left-hand side can be expressed as
\[
\E\left(\sum_{k=0}^{K}h\left(\sum_{j=1}^k Z^p_{j}\right)\right),
\]
and this term is converging to
\[
\E\left(\sum_{k=0}^{K}h\left(\sum_{j=1}^k Z^\infty_{j}\right)\right). 
\]
We have thus obtained that, 
\[
\E\left(\sum_{k=0}^{K}h\left(\sum_{j=1}^k Z^\infty_{j}\right)\right)\le \E(I)<{+}\infty,
\]
by letting $K$ go to infinity, this gives the relation
\begin{equation}\label{eqr1}
\E\left(\sum_{k=0}^{+\infty}h\left(\sum_{j=1}^k Z^\infty_{j}\right)\right)<{+}\infty,
\end{equation}

\bigskip
\noindent{\sc Step 1}: A truncation argument.\\
For $i{>}0$, by using the monotonicity property of $h$,  we obtain the relations, for $s{\ge}0$,
\[
\left\{\sum_{k{=}0}^{i} h\left(s{+}\hspace{-3mm}\sum_{j=i+1-k}^{i}\hspace{-2mm} X^0_j\right) {\ge} C\right\}
\subset
\left\{\sum_{k{=}0}^{i} h\left(\sum_{j=i+1-k}^{i}\hspace{-2mm} X^0_j\right) {\ge} C\right\}
\]
hence, with  the couplings of Propositions~\ref{CrumpProp} and~\ref{propmono} and the notations of its proof, and Relation~\eqref{eqavx2}, we obtain
\begin{multline*}
  \P\left(\inf\left(\sum_{k{=}0}^{i} h\left(s{+}\hspace{-3mm}\sum_{j=i+1-k}^{i}\hspace{0mm} X^0_j\right):s{\ge}0\right) {\ge} C\right)
  \le \P\left(\sum_{k{=}0}^{i} h\left(\hspace{0mm}\sum_{j=i+1-k}^{i}\hspace{0mm} X^0_j\right) {\ge} C\right)\\
\le \P\left(\sum_{k{=}0}^{i} h\left(\sum_{j=i+1-k}^{i}\hspace{-2mm} Y^0_j\right) {\ge} C\right)
= \P\left(\sum_{k{=}0}^{i} h\left(\sum_{j=1}^{k}\widetilde{Y}^i_j\right) {\ge} C\right)\\
\le \P\left(\sum_{k{=}0}^{+\infty} h\left(\sum_{j=1}^{k} \widetilde{Y}^{\infty}_j\right) {\ge} C\right).
\end{multline*}
For $\eps{>}0$, Proposition~\ref{CrumpProp} gives therefore the existence of $C_0{>}0$ such that
\begin{equation}\label{eqst1}
\sup_{i{\ge}1}\P\left(\sup_{s\ge 0}\sum_{k{=}0}^{i} h\left(s{+}\hspace{-3mm}\sum_{j=i+1-k}^{i}\hspace{-2mm} X^0_j\right) {\ge} C_0\right)
\leq \eps.
\end{equation}

For $\delta{>}0$, the function $\Phi$ being uniformly continuous $[0,C_0]$, there exists some $\eta_0$ such that $|\Phi(x){-}\Phi(y)|{\le}\delta$, for all elements $x$ and $y$ of $[0,C_0]$ such that $|x{-}y|{\le}\eta_0$. 

\bigskip
\noindent{\sc Step 2}.\\
For $s{\ge}0$, $i{>}K{\ge}0$, with the same arguments, we get 
\begin{multline*}
\left\{  \left|\Phi\left(\sum_{k{=}0}^{i} h\left(s{+}\hspace{-3mm}\sum_{j=i+1-k}^{i}\hspace{-2mm} X^0_j\right)\right){-}
\Phi\left(\sum_{k{=}0}^{K} h\left(s{+}\hspace{-3mm}\sum_{j=i+1-k}^{i}\hspace{-2mm} X^0_j\right)\right)\right|{>}\delta\right\}\\
\subset
\left\{
\sup_{s\ge 0}\sum_{k{=}0}^{i} h\left(s{+}\hspace{-3mm}\sum_{j=i+1-k}^{i}\hspace{-2mm} X^0_j\right) {\ge} C_0
\right\}
\bigcup
\left\{
\sum_{k{=}K+1}^{i} h\left(s{+}\hspace{-3mm}\sum_{j=i+1-k}^{i}\hspace{-2mm} X^0_j\right){>}\eta_0
\right\}\\
\subset
\left\{
\sup_{s\ge 0}\sum_{k{=}0}^{i} h\left(\sum_{j=i+1-k}^{i}\hspace{-2mm} X^0_j\right) {\ge} C_0
\right\}
\bigcup
\left\{
\sum_{k{=}K+1}^{i} h\left(\sum_{j=i+1-k}^{i}\hspace{-2mm} X^0_j\right){>}\eta_0
\right\},
\end{multline*}
and,  as before, 
\begin{multline*}
\P\left(
\sum_{k{=}K+1}^{i} h\left(\sum_{j=i+1-k}^{i}\hspace{-2mm} X^0_j\right){>}\eta_0
\right)
\le 
\P\left(
\sum_{k{=}K+1}^{i} h\left(\sum_{j=i+1-k}^{i}\hspace{-2mm} Y^0_j\right){>}\eta_0
\right)\\
=
\P\left(
\sum_{k{=}K+1}^i h\left(\sum_{j=1}^{k} \widetilde{Y}^{i}_{j}\right){>}\eta_0.
\right)
\le 
\P\left(
\sum_{k{=}K+1}^i h\left(\sum_{j=1}^{k} \widetilde{Y}^{\infty}_{j}\right){>}\eta_0.
\right)\\
\le 
\P\left(
\sum_{k{=}K+1}^{+\infty} h\left(\sum_{j=1}^{k} \widetilde{Y}^{\infty}_{j}\right){>}\eta_0.
\right). 
\end{multline*}
The random variable $I$ being almost surely finite, Proposition~\ref{CrumpProp}, gives that  there exists $K_0$ such that
\[
\P\left(\sum_{k{=}K_0+1}^{+\infty} h\left(\sum_{j=1}^{k} \widetilde{Y}^{\infty}_{j}\right){>}\eta_0\right)\le \eps.
\]
By gathering these results, with Relation~\eqref{eqst1}, we obtain
\[
\sup_{i{\ge}K_0}\P\left( \sup_{s{\ge}0} \left|
\Phi\left(\sum_{k{=}0}^{i} h\left(s{+}\hspace{-3mm}\sum_{j=i+1-k}^{i}\hspace{-2mm} X^0_j\right)\right){-}
\Phi\left(\sum_{k{=}0}^{K_0} h\left(s{+}\hspace{-3mm}\sum_{j=i+1-k}^{i}\hspace{-2mm} X^0_j\right)\right)\right|{>}\delta\right)\\
\le 2 \eps,
\]
hence
\begin{multline}\label{eqst2}
  \lim_{K\to+\infty} \sup_{i{\ge}K}
\left|
\E\left(\exp\left({-}\int_0^{T}\Phi\left(\sum_{k{=}0}^{i} h\left(s{+}\sum_{j=i+1-k}^{i} X^0_j\right)\right)\diff s\right)\right)
\right.\\\left. {-}\E\left(\exp\left({-}\int_0^{T}\Phi\left(\sum_{k{=}0}^{K} h\left(s{+}\sum_{j=i+1-k}^{i} X^0_j\right)\right)\diff s\right)\right)
\right|=0.
\end{multline}

\bigskip
\noindent{\sc Step 3}.\\
Relations~\eqref{TZeq}  and~\eqref{eqst2} show that, for $\eps{>}0$, there exists $K_0$ such that the relation
\begin{multline}\label{eqst3}
  \left|
\P({\cal T}({\cal Z}_p,E_0){\ge}T)\right.\\\left.-
\frac{1}{n_p}\sum_{i=K_0}^{n_p}\E\left(\exp\left({-}\int_0^{T}
\Phi\left(\sum_{k{=}0}^{K_0} h\left(s{+}\hspace{-3mm}\sum_{j=i+1-k}^{i}\hspace{-2mm} X^0_j\right)\right)\diff s\right)\right)\right| \le\eps
\end{multline}
holds for all $p{\ge}1$.
Relation~\eqref{eqr1} shows that one can choose a constant $K_0$  sufficiently large so that
\begin{multline*}
\left|\E\left(\exp\left({-}\int_0^{T}\Phi\left(\sum_{k{=}0}^{K_0} h\left(s{+}\sum_{j=-k}^{1}  Z^{\infty}_j\right)\right)\diff s\right)\right)-\right.\\\left.
\E\left(\exp\left({-}\int_0^{T}\Phi\left(\sum_{k{=}0}^{+\infty} h\left(s{+}\sum_{j=1}^{k}  Z^{\infty}_j\right)\right)\diff s\right)\right)\right|\le \eps. 
\end{multline*}

With the convergence of the sequence $(Q_{n_p})$, we have
\begin{multline*}
\lim_{p\to+\infty}
\frac{1}{n_p}\sum_{i=K_0}^{n_p}\E\left(\exp\left({-}\int_0^{T}
\Phi\left(\sum_{k{=}0}^{K_0} h\left(s{+}\hspace{-3mm}\sum_{j=i+1-k}^{i}\hspace{-2mm} X^0_j\right)\right)\diff s\right)\right)\\
= \E\left(\exp\left({-}\int_0^{T}\Phi\left(\sum_{k{=}0}^{K_0} h\left(s{+}\sum_{j=1}^{k}  Z^{\infty}_j\right)\right)\diff s\right)\right).
\end{multline*}
Note that the $K_0$ first terms in the expression of $Q_{n_p}$ can be arbitrarily small as $p$ gets large.
With Relation~\eqref{eqst3}, by letting $\eps$ go to $0$,  we obtain Relation~\eqref{eqCVEqui}. 
The theorem is proved. 
\end{proof}

\subsection{Coupling Properties}
We have not been able to obtain an analogue, sufficiently strong,  result for the uniqueness of a stationary Hawkes point process. Without a contracting scheme available, one of the few possibilities is of using a coupling argument of the trajectories of the Markov chain $({\cal X}_n^z)$.  In this section, we investigate the coupling of the Markov chain $({\cal X}_n^z)$ with the Markov chain starting from the empty state. 

We assume that the activation function $\Phi$ has the Lipschitz property,
\begin{equation}\label{Lip}
|\Phi(x){-}\Phi(y)|\le L_\Phi|x{-}y|, \quad \forall x, y{\ge}0,
\end{equation}
for some constant $L_\Phi{>}0$.
Let $f$ be a non-negative Borelian function on $\R_+$, and $\tau_f$ be the random variable such that the random variable
\[
\int_0^{\tau_f} f(u)\diff u
\]
has an exponential distribution with parameter $1$, i.e. for $t{\ge}0$,

\[
\P(\tau_f{\ge} t)=\exp\left({-}\int_0^t f(u)\,\diff u\right).
\]
\begin{lemma}\label{Lemd1}
If  $f$ and $g$ are non-negative integrable functions on $\R_+$, then there exists a coupling of $\tau_f$ and $\tau_g$, such
that
\[
\P(\tau_f{\ne}\tau_g)\leq \int_{\R_+}|f(u){-}g(u)|\,\diff u.
\]
\end{lemma}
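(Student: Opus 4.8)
The plan is to use the ``graphical'' coupling built from the ambient Poisson process ${\cal P}$ on $\R_+{\times}\R$ already fixed on $(\Omega,{\cal F},\P)$, restricted to the time half-line $\R_+$. For a non-negative integrable function $\varphi$ on $\R_+$, I would set
\[
\tau_\varphi\steq{def}\inf\left\{t{\ge}0:\int_{(0,t]}{\cal P}\bigl((0,\varphi(s)],\diff s\bigr){\ge}1\right\},
\]
i.e. the first time coordinate of a point of ${\cal P}$ lying below the graph of $\varphi$. By the Laplace transform identity between a thinned Poisson process and a time-changed one --- exactly Relation~\eqref{eqpp} used in the proof of Proposition~\ref{HSDEprop} --- the process $(\int_{(0,t]}{\cal P}((0,\varphi(s)],\diff s),t{\ge}0)$ is a Poisson process with cumulative intensity $\int_0^t\varphi(s)\diff s$, so that $\P(\tau_\varphi{\ge}t){=}\exp({-}\int_0^t\varphi(s)\diff s)$; hence $\tau_\varphi$ has the distribution prescribed in the statement. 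Taking $\varphi{=}f$ and $\varphi{=}g$ with the \emph{same} ${\cal P}$ then realizes $\tau_f$ and $\tau_g$ on a common probability space, which is the coupling we want.

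The heart of the argument is the deterministic observation that the two first-point times can differ only if ${\cal P}$ charges the region lying strictly between the graphs of $f$ and $g$. Concretely, let
\[
D\steq{def}\bigl\{(v,s){\in}\R_+{\times}\R_+:\min(f(s),g(s)){<}v{\le}\max(f(s),g(s))\bigr\},
\]
the symmetric difference of the hypographs $\{v{\le}f(s)\}$ and $\{v{\le}g(s)\}$. On the event $\{{\cal P}(D){=}0\}$ the restrictions of ${\cal P}$ to these two hypographs coincide, hence $\int_{(0,t]}{\cal P}((0,f(s)],\diff s){=}\int_{(0,t]}{\cal P}((0,g(s)],\diff s)$ for all $t{\ge}0$, and therefore $\tau_f{=}\tau_g$ (with the convention that both may equal ${+}\infty$). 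Thus $\{\tau_f{\ne}\tau_g\}\subset\{{\cal P}(D){\ge}1\}$.

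To conclude, I would compute the intensity of ${\cal P}$ on $D$: by Fubini's theorem the Lebesgue measure of $D$ equals $\int_{\R_+}(\max(f(s),g(s)){-}\min(f(s),g(s)))\diff s=\int_{\R_+}|f(s){-}g(s)|\diff s$, which is finite since $f$ and $g$ are integrable; so ${\cal P}(D)$ is a Poisson variable with this mean and Markov's inequality gives
\[
\P(\tau_f{\ne}\tau_g)\le\P({\cal P}(D){\ge}1)\le\E\bigl({\cal P}(D)\bigr)=\int_{\R_+}|f(u){-}g(u)|\diff u.
\]
I do not expect a genuine obstacle here: the only points needing a little care are the null-set bookkeeping at the boundaries of the graphs --- harmless, since ${\cal P}$ is a.s. simple and a.s. charges no fixed Lebesgue-null set --- and the case where $\tau_f$ and/or $\tau_g$ equals ${+}\infty$, which is already absorbed into the inclusion above.
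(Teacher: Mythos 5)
Your proof is correct. It is in fact the same coupling as the paper's, viewed from a different angle: the paper decomposes $f{=}(f{\wedge}g){+}(f{-}g)^+$ and $g{=}(f{\wedge}g){+}(g{-}f)^+$ and takes the three clocks $\tau_{f\wedge g}$, $\tau_{(f-g)^+}$, $\tau_{(g-f)^+}$ independent, setting $T_f{=}\tau_{(f-g)^+}{\wedge}\tau_{f\wedge g}$ and $T_g{=}\tau_{(g-f)^+}{\wedge}\tau_{f\wedge g}$; in your planar-Poisson picture these three clocks are exactly the first arrival times of ${\cal P}$ in the hypograph of $f{\wedge}g$ and in the two excess regions whose union is your set $D$, and independence comes for free from disjointness. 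Where the two arguments genuinely diverge is in the final estimate. The paper uses the inclusion $\{\tau_{(f-g)^+}{\wedge}\tau_{(g-f)^+}{\ge}\tau_{f\wedge g}\}{\subset}\{T_f{=}T_g\}$ and computes $\P(\tau_{|f-g|}{\ge}\tau_{f\wedge g})$ explicitly via the density of $\tau_{|f-g|}$ before bounding the resulting integral. You use the coarser inclusion $\{{\cal P}(D){=}0\}{\subset}\{\tau_f{=}\tau_g\}$ — discarding the favourable event where $D$ is charged only after the common clock has rung — and then apply Markov's inequality to the Poisson count ${\cal P}(D)$. This is shorter and avoids any density computation (your bound is in fact $1{-}e^{-\int|f-g|}$, which is no worse than the paper's), at the price of a slightly less refined intermediate inequality. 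The two minor points you flag (boundary null sets, the value ${+}\infty$) are indeed harmless, as is the identification $\P(\tau_\varphi{\ge}t){=}\exp({-}\int_0^t\varphi)$, which follows from the continuity of $t{\mapsto}\int_0^t\varphi$.
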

\begin{proof}
Define $h_f{=}(f{-}g)^+$, $h_g{=}(g{-}f)^+$, we can take the random variables $\tau_{h_f}$, $\tau_{h_g}$  and $\tau_{f{\wedge}g}$ independent, then it is easily checked that
\[
T_f{=}\tau_{h_f}{\wedge}\tau_{f{\wedge}g}{\steq{dist}}\tau_{f}
\text{ and  } T_g{=}\tau_{h_g}{\wedge}\tau_{f{\wedge}g}{\steq{dist}}\tau_{g},
\]
and
\[
\P(T_f{=}T_g)\ge \P(\tau_{h_f}{\wedge}\tau_{h_g}\ge\tau_{f{\wedge}g})=
\P(\tau_{h_f{+}h_g}\ge\tau_{f{\wedge}g})=\P(\tau_{|f{-}g|}\ge\tau_{f{\wedge}g}),
\]
hence, we obtain,
\begin{multline*}
\P(T_f{\ne}T_g)\le\E\left(\exp\left(-\int_0^{\tau_{|f{-}g|}}f(u){\wedge} g(u)\diff u\right)\right)
\\=\int_0^{+\infty} (|f(v){-}g(v)|)\exp\left(-\int_0^{v}f(u){\wedge} g(u){+}|f(v){-}g(v)|\diff u\right)\diff v
\\\le \int_0^{+\infty} |f(v){-}g(v)|\diff v. 
\end{multline*}
The lemma is proved. 
\end{proof}
The next proposition gives an upper bound on the probability that the Markov chain $({\cal X}_n^z)$ do not couple right from the beginning with the Markov chain starting from the empty state. 
\begin{proposition}\label{CoupIneq}
For $z{\in}{\cal S}_h$, if $\Phi$ is Lipschitz with constant $L_\Phi$,  there exists a coupling of the Markov chains $({\cal X}_n^0)$ and $({\cal X}_n^z)$ of Definition~\ref{MarkDef}, and  $D_1$, $D_2{\in}\R_+$ such that 
\begin{multline}\label{UpCoup}
\P\left(\exists n{\ge}1, X_{n}^0{\ne}X_{n}^z\right)
\\\le L_\Phi\left( D_1\int_{\R_-}\int_0^{+\infty} h\left(s{-}u\right)\diff s\,m_z(\diff u)+
 D_2\int_{\R_-}\int_0^{+\infty} sh\left(s{-}u\right)\diff s\, m_z(\diff u)\right).
\end{multline}
\end{proposition}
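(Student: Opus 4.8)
The plan is to couple the two Markov chains step by step, controlling at each step the probability that they have not yet disagreed, and then to sum the resulting one-step discrepancies. Fix the driving i.i.d. exponential sequence $(E_n)$. At step $n$, conditionally on the event $A_{n-1}{=}\{X_k^0{=}X_k^z,\,1{\le}k{<}n\}$ that the chains have agreed so far, the variable $X_n^0$ is defined by Relation~\eqref{eqX0} with $m{=}\delta_0$ and the common past $(X_1^0,\dots,X_{n-1}^0)$, while $X_n^z$ is defined by the same relation with $m{=}m_z$ and the identical past. Thus on $A_{n-1}$ both $X_n^0$ and $X_n^z$ are of the form $\tau_f$ and $\tau_g$ of Lemma~\ref{Lemd1}, where $f(s){=}\Phi\bigl(\sum_{k\ge 0}h(s{+}\sum_{i=k+1}^{n-1}X_i^0)\bigr)$ and $g(s){=}\Phi\bigl(\int_{\R_-}h(T_{n-1}{+}s{-}u)m_z(\diff u){+}\sum_{k\ge 0}h(s{+}\sum_{i=k+1}^{n-1}X_i^0)\bigr)$, with $T_{n-1}{=}X_1^0{+}\cdots{+}X_{n-1}^0$. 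I would use the coupling of Lemma~\ref{Lemd1} independently at each step (conditionally on the past), so that
\[
\P\left(X_n^0{\ne}X_n^z,\ A_{n-1}\right)\le \E\left(\ind{A_{n-1}}\int_0^{+\infty}|f(s){-}g(s)|\,\diff s\right).
\]
By the Lipschitz property~\eqref{Lip} and the fact that the two arguments of $\Phi$ differ only by the term $\int_{\R_-}h(T_{n-1}{+}s{-}u)m_z(\diff u)$, this is bounded by $L_\Phi\,\E\bigl(\ind{A_{n-1}}\int_0^{+\infty}\int_{\R_-}h(T_{n-1}{+}s{-}u)m_z(\diff u)\,\diff s\bigr)$.

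The next step is to sum over $n$. Since $\{\exists n{\ge}1:X_n^0{\ne}X_n^z\}=\bigcup_{n\ge1}(\{X_n^0{\ne}X_n^z\}\cap A_{n-1})$ is a disjoint union, a union bound gives
\[
\P\left(\exists n{\ge}1, X_n^0{\ne}X_n^z\right)\le L_\Phi\sum_{n\ge1}\E\left(\ind{A_{n-1}}\int_0^{+\infty}\int_{\R_-}h(T_{n-1}{+}s{-}u)\,m_z(\diff u)\,\diff s\right).
\]
Dropping $\ind{A_{n-1}}\le 1$ and using the change of variable $s\mapsto T_{n-1}{+}s$ inside the $s$-integral, the $n$-th summand is at most $\E\bigl(\int_{\R_-}\int_{T_{n-1}}^{+\infty}h(s{-}u)\,\diff s\,m_z(\diff u)\bigr)$. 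Now I would use Proposition~\ref{TnIneq} (applied here to the chain started from $\delta_0$, whose points $T_n{=}T_n^0$ dominate a linear functional of the random walk $\sum_k E_k$): there exist $\nu>0$ and $\beta_0>\beta$ with $\alpha\beta_0<1$ such that $T_{n-1}^0\ge$ a suitable nonnegative lower bound; more usefully, the argument behind Proposition~\ref{TnIneq} shows $\E(\#\{n: T_n^0\le x\})\le D_1{+}D_2x$ for constants $D_1,D_2\ge0$, exactly as in the linear case treated via the renewal/Tauberian estimate~\eqref{eqavx4}. Fubini then converts the sum over $n$ into an integral against the expected counting measure of $(T_n^0)$:
\[
\sum_{n\ge1}\E\left(\int_{\R_-}\!\!\int_{T_{n-1}^0}^{+\infty}\!\!h(s{-}u)\,\diff s\,m_z(\diff u)\right)=\E\left(\int_{\R_-}\!\!\int_0^{+\infty}\!\!\Bigl(\textstyle\sum_{n\ge1}\ind{T_{n-1}^0\le s}\Bigr)h(s{-}u)\,\diff s\,m_z(\diff u)\right),
\]
and bounding $\sum_{n\ge1}\ind{T_{n-1}^0\le s}\le 1{+}\E(\#\{n\ge1:T_n^0\le s\})\le D_1{+}D_2 s$ (after taking expectations, absorbing constants) yields precisely the right-hand side of~\eqref{UpCoup}, with the two terms corresponding to the constant $D_1$ and the linear $D_2 s$ parts respectively. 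Integrability of the bound follows from $\int_0^\infty s h(s)\,\diff s<\infty$ (assumption on $h$) together with $\int_{\R_-}h(-u)m_z(\diff u)<\infty$, i.e. $z\in{\cal S}_h$.

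The main obstacle I anticipate is the legitimacy of performing the step-by-step coupling of Lemma~\ref{Lemd1} \emph{jointly} across all $n$ while keeping the same driving randomness for both chains — one must check that the construction in Lemma~\ref{Lemd1}, applied conditionally on the common past on the event $A_{n-1}$, can be carried out consistently (e.g. by enlarging the probability space with an independent family of auxiliary randomizers, one per step) so that on $A_{n-1}$ the pair $(X_n^0,X_n^z)$ has the right conditional law and coincides with probability at least $1-\int|f-g|$. The other delicate point is the uniform-in-$z$ renewal estimate $\E(\#\{n:T_n^0\le x\})\le D_1+D_2x$: this does not depend on $z$ at all since the chain is started from $\delta_0$, so it is really the linear-Hawkes estimate of Section~\ref{SecCrump} (Relation~\eqref{eqavx4}) transported via the coupling $Y_n^0\le X_n^0$ of Proposition~\ref{propmono}, and I would invoke that comparison to get the bound for the non-linear chain. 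Everything else is routine Fubini and the change of variables $s\mapsto T_{n-1}{+}s$.
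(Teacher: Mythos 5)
Your proposal is correct and follows essentially the same route as the paper: a step-by-step coupling via Lemma~\ref{Lemd1} on the agreement events, the Lipschitz bound reducing each one-step discrepancy to $L_\Phi\int\!\!\int h(T_{n-1}{+}s{-}u)\,m_z(\diff u)\diff s$, the lower bound $\sum_i X_i^0\ge S_n^+$ from Proposition~\ref{TnIneq}, and then Fubini together with the linear renewal estimate $\E({\cal N}_+(0,x))\le D_1{+}D_2x$ of Relation~\eqref{eqavx4}/\eqref{Cineq}. The only cosmetic difference is that you phrase the last step through the counting measure of $(T_n^0)$ while the paper works directly with the random walk $(S_n)$; your remark about constructing the couplings consistently across steps is a point the paper leaves implicit.
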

\begin{proof}
Since the random variables $X_1^0$ and $X_1^z$ can be expressed as $\tau_{f_0}$ and $\tau_{f_y}$, with
\[
f_0(s)=\Phi(h(s)) \text{ and } f_y(s)= \Phi\left(h(s){+}\int_{0}^{+\infty} h(s{-}u)m_z(\diff u)\right),
\]
Lemma~\ref{Lemd1} shows that
\begin{multline*}
P(X_1^0{\ne}X_1^z)\leq
\int_0^{+\infty} \left|\Phi\left( h\left(s\right)\right){-}
\Phi\left( \int_{\R_-}h(s{-}u)m_z(\diff u){+} h\left(s\right)\right)\right|\,\diff s\\
\leq L_\Phi\int_0^{+\infty}\int_{\R_-}h(s{-}u)m_z(\diff u)\,\diff s.
\end{multline*}
By induction, we obtain that, for $n{\ge}1$,
\begin{align*}
  P(X_{n+1}^0{\ne}X_{n+1}^z{\mid} X_k^0&{=}X_k^z, \forall 1{\le}k{\le}n )\\
  \leq \int_0^{+\infty}&\left| \Phi\left( \sum_{k{=}1}^{n+1} h\left(s{+}\sum_{i=k}^{n}X^z_i \right)\right)\right.\\
  &{-}\left.\Phi\left( \int_{\R_-}h\left(s{+}\sum_{i=1}^{n}X^0_i{-}u\right)m_z(\diff u){+} \sum_{k{=}1}^{n+1} h\left(s{+}\sum_{i=k}^{n}X^0_i \right)\right)\right|\,\diff s\\
\leq L_\Phi&\int_0^{+\infty} \int_{\R_-}h\left(s{+}\sum_{i=1}^{n}X^0_i{-}u\right)m_z(\diff u)\diff s.
\end{align*}
With Proposition~\ref{TnIneq}, there exists some $\beta_0$ such that $\alpha\beta_0{<}1$ and
\[
\sum_{i=1}^{n}X^0_i\ge S_n^+, \text{ with } S_n\steq{def} \sum_{i=1}^{n}E_i{-}\alpha\beta_0,
\]
hence
\begin{align*}
\P\left(\exists n{\ge}1, X_{n}^0{\ne}X_{n}^z\right)&\leq
L_\Phi\sum_{n=0}^{+\infty}\int_0^{+\infty} \int_{\R_-}\E\left(h\left(s{+}\sum_{i=1}^{n}X^0_i{-}u\right)\right)m_z(\diff u)\diff s\\
&\le L_\Phi\sum_{n=0}^{+\infty}\int_0^{+\infty} \int_{\R_-}\E\left(h\left(s{+}S_n^+{-}u\right)\right)m_z(\diff u)\diff s.
\end{align*}
For $u{>}0$, in the same way as in the proof of Proposition~\ref{CrumpProp}, with Fubini's Theorem, we obtain
\begin{align*}
  \sum_{n=0}^{+\infty}\int_0^{+\infty} \E\left(h\left(s{+}S_n^+{+}u\right)\right)\diff s
  &=\sum_{n=0}^{+\infty}\int_0^{+\infty} \E\left(h\left(s+u\right)\ind{s{\ge}S_n^+}\right)\diff s\\
&=\int_0^{+\infty} h\left(s{+}u\right)\E\left({\cal N}_+([0,s])\right)\diff s,
\end{align*}
where ${\cal N}_+$ is defined by Relation~\eqref{Cdef}, with Inequality~\eqref{Cineq} we obtain the desired estimate. 
\end{proof}

 \begin{corollary}\label{corlc1}
 For $x_0{=}(z_i,i{\ge}1){\in}{\cal S}_h$, if $\Phi$ is Lipschitz with constant $L_\Phi$ and 
\[
\int_{\R_-}\int_0^{+\infty} sh\left(s{-}u\right)\diff s \,m_{x_0}(\diff u) < {+}\infty,
\]
then there exists $K_0{>}0$ such that if $a{\ge}K_0$ and $x_1(a){=}(a,z_2,z_3,\ldots)$, then
\[
\P\left( X_{n}^0{=}X_{n}^{x_1(a)},\forall n{\ge}1\right)>\frac{1}{2}. 
\]
 \end{corollary}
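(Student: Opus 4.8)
The plan is to apply the coupling estimate of Proposition~\ref{CoupIneq} with $z=x_1(a)$ and to show that the right-hand side of~\eqref{UpCoup} tends to $0$ as $a\to+\infty$. Since $\{X_n^0=X_n^{x_1(a)},\forall n\ge1\}$ is the complement of $\{\exists n\ge1,\ X_n^0\ne X_n^{x_1(a)}\}$, any $K_0$ beyond which this bound drops below $1/2$ will do.

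Write $\sigma_1=0$ and $\sigma_k=z_2+\cdots+z_k$ for $k\ge2$, and $\zeta_k=z_1+\sigma_k$. The atoms of $m_{x_0}$ are $0$ and the points $-\zeta_k$, $k\ge1$, while those of $m_{x_1(a)}$ are $0$ and the points $-a-\sigma_k$, $k\ge1$; for $a\ge z_1$ one has $-a-\sigma_k\le-\zeta_k$, so $m_{x_1(a)}$ is $m_{x_0}$ with the atom at $0$ unchanged and every other atom pushed to the left. Since $h$ is non-increasing and $x_0\in{\cal S}_h$, the quantity $h(0)+\sum_{k\ge1}h(a+\sigma_k)\le h(0)+\sum_{k\ge1}h(\zeta_k)$ is finite, so $x_1(a)\in{\cal S}_h$ and Proposition~\ref{CoupIneq} applies. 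The maps $u\mapsto\int_0^{+\infty}h(s-u)\,\diff s=\overline H(-u)$ and $u\mapsto\int_0^{+\infty}sh(s-u)\,\diff s$ are non-negative and non-decreasing on $\R_-$ (pushing an atom to the left makes both smaller), and take the same value at the common atom $0$, which enters the intensities $f_0$ and $f_y$ of the proof of Proposition~\ref{CoupIneq} only through a shared term and hence does not contribute to~\eqref{UpCoup}. Thus, up to the constants $L_\Phi D_1$ and $L_\Phi D_2$, the right-hand side of~\eqref{UpCoup} at $z=x_1(a)$ is governed by
\[
\sum_{k\ge1}\overline H(a+\sigma_k)\quad\text{and}\quad\sum_{k\ge1}\int_0^{+\infty}sh(s+a+\sigma_k)\,\diff s,
\]
two quantities that are non-increasing in $a$ and, for $a\ge z_1$, dominated by $\sum_{k\ge1}\overline H(\zeta_k)$ and $\sum_{k\ge1}\int_0^{+\infty}sh(s+\zeta_k)\,\diff s$ respectively.

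It remains to check that these two bounding series are finite and that the corresponding terms vanish as $a\to+\infty$. The hypothesis $\int_{\R_-}\int_0^{+\infty}sh(s-u)\,\diff s\,m_{x_0}(\diff u)<+\infty$ is precisely $\int_0^{+\infty}sh(s)\,\diff s+\sum_{k\ge1}\int_0^{+\infty}sh(s+\zeta_k)\,\diff s<+\infty$; it yields both the finiteness of $\sum_{k\ge1}\int_0^{+\infty}sh(s+\zeta_k)\,\diff s$ and $\int_0^{+\infty}vh(v)\,\diff v<+\infty$, hence $\int_a^{+\infty}vh(v)\,\diff v\to0$. Combining this with $\sum_{k\ge1}h(\zeta_k)<+\infty$ (from $x_0\in{\cal S}_h$) and the elementary bound
\[
\overline H(w)=\int_w^{w+1}h(v)\,\diff v+\int_{w+1}^{+\infty}h(v)\,\diff v\le h(w)+\int_0^{+\infty}sh(s+w)\,\diff s,
\]
summed over $k$, gives $\sum_{k\ge1}\overline H(\zeta_k)<+\infty$. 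Finally, for each fixed $k$, $\overline H(a+\sigma_k)\le\overline H(a)\to0$ and $\int_0^{+\infty}sh(s+a+\sigma_k)\,\diff s\le\int_a^{+\infty}vh(v)\,\diff v\to0$ as $a\to+\infty$, so dominated convergence in the two displayed sums shows that the right-hand side of~\eqref{UpCoup} with $z=x_1(a)$ tends to $0$. Picking $K_0\ge z_1$ large enough that this bound is $<1/2$ proves the corollary.

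I do not expect a genuine obstacle, but two points require care. The only real estimate is the finiteness of $\sum_{k\ge1}\overline H(\zeta_k)$: this does not follow from $x_0\in{\cal S}_h$ alone (the tail $\overline H$ can be much larger than $h$) and is obtained only by bringing in the $sh$-integrability hypothesis through the splitting at $w+1$ above. Second, one must apply Proposition~\ref{CoupIneq} with $m_z$ understood to be restricted to the atoms strictly to the left of $0$: the atom at $0$, common to $m_{x_0}$ and $m_{x_1(a)}$, contributes the same term $h(s)$ to $f_0$ and $f_y$ in the proof of that proposition, and retaining its ($a$-independent) contribution in~\eqref{UpCoup} would keep the estimate bounded away from $0$.
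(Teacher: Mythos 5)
Your proof is correct and follows the same route as the paper's own (one-sentence) proof: apply Proposition~\ref{CoupIneq} with $z=x_1(a)$ and show that the right-hand side of~\eqref{UpCoup} tends to $0$ as $a\to+\infty$ by monotonicity of $h$ and dominated convergence. The two subtleties you flag --- that the finiteness of $\sum_k\overline{H}(\zeta_k)$ genuinely requires the $sh$-integrability hypothesis rather than $x_0\in{\cal S}_h$ alone, and that the common atom at $0$ must be understood as excluded from the integrals in~\eqref{UpCoup} lest the bound stay away from $0$ --- are real gaps in the paper's terse argument, and your handling of both is sound.
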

 \begin{proof}
It is enough to note that, by monotonicity of $h$,  the right-hand side of Relation~\eqref{UpCoup} goes to $0$ if $z_1$ is replaced by a quantity $a$ sufficiently large and, in particular less that $1/2$ if $a{\ge}K_0$, for some $K_0{>}0$. 
 \end{proof}

\section{The Case of Exponential Memory}
\label{ExpDecSec}
In this section we assume that the function $h$ associated to the memory of previous jumps is exponentially decreasing.
\[
h(u){=}\exp({-}u/\alpha),
\]
for some $\alpha{>}0$. In this case, the past activity of the Hawkes process can be encoded by a one-dimensional Markov process. One of the early analyses is~\citet{oakes_markovian_1975}. \citet{duarte_stability_2016} considers a more general model for which $h$ is the density of the sum of identically exponential random variables. The approach is of describing the  Hawkes process in terms of a multi-dimensional Markov process,  to encode the past activity of the Hawkes process.

In this section, we give a  existence and uniqueness result of the stationary Hawkes chain with a weaker
condition than the classical relation $\alpha L{<}1$, where $L$ is the Lipschitz constant associated to $\Phi$.
The result is obtained by using the Markov process of Section~\ref{MarkSec}.
At the same time an explicit representation of the distribution of the corresponding Palm measure in terms of the
invariant distribution of a one-dimensional Markov chain is obtained.
We conclude this section with non-Lipschitz activation functions $\Phi$ for which the solution of the Hawkes SDE blows-up in finite time.
A limit result gives a scaling description of how accumulation of the points of Hawkes process occurs in this case.

\begin{proposition} \label{propZ}
Let $x{\in}{\cal S}_h$, $m_x$ defined by Relation~\eqref{mx} and ${\cal N}_{m_x}{=}(T_n)$ of Proposition~\ref{HSDEprop},
then
    \begin{multline*}
        (Z_n)\steq{def} \left(\int_{(-\infty,T_n]}h(T_n{-}s){\cal N}_{m_x}(\diff s)\right)\\
        = \int_{(-\infty,0]}\exp\left({-}(T_n{-}u)/\alpha\right)m_x(\diff u){+}
            \sum_{k{=}1}^{n} \exp\left({-}(T_n{-}T_k)/\alpha\right)
    \end{multline*}
is a Markov chain on $(1,+\infty)$ such that, for $n{\ge}0$,
    \begin{equation}\label{eqMark}
        Z_0{=}\int_{(-\infty,0]} e^{s/\alpha}m_x(\diff s), \text{ and }Z_{n+1}{=}1{+}e^{- X_{n+1}/\alpha}Z_{n},
    \end{equation}
where $X_{n+1}{=}T_{n+1}{-}T_n$ is the unique solution of the equation

\begin{equation}\label{eqX3}
\int_0^{X_{n+1}}\Phi\left(e^{- s/\alpha}Z_{n}\right)\,\diff s =E_{n+1},
\end{equation}
where $(E_n)$ are i.i.d. random variables with an exponential distribution with parameter~$1$.
\end{proposition}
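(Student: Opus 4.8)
The plan is to exploit the single algebraic feature that sets the exponential kernel apart, the semigroup identity $h(a{+}b){=}h(a)h(b)$, equivalently $h(t{-}x){=}e^{-(t{-}T_n)/\alpha}\,h(T_n{-}x)$ for $x{\le}T_n{\le}t$, and to read the recursion directly off the defining relation~\eqref{eqX} of Proposition~\ref{HSDEprop}. First I would make the definition of $Z_n$ explicit: since ${\cal N}_{m_x}$ coincides with $m_x$ on $\R_-$ and equals $\sum_{k=1}^n\delta_{T_k}$ on $(0,T_n]$, the integral $\int_{(-\infty,T_n]}h(T_n{-}s){\cal N}_{m_x}(\diff s)$ is exactly the sum displayed in the statement, and for $n{=}0$, $T_0{=}0$ gives $Z_0{=}\int_{(-\infty,0]}e^{s/\alpha}m_x(\diff s)$, which is finite precisely because $x{\in}{\cal S}_h$. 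I would also record that $Z_n{\ge}1$ always, the atom of ${\cal N}_{m_x}$ at $T_n$ contributing $h(0){=}1$, with $Z_0{=}1$ only in the degenerate case $m_x{=}\delta_0$.

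Next, fix $n{\ge}0$. Because $T_n$ and $T_{n+1}$ are consecutive points of ${\cal N}_{m_x}$, no atom of ${\cal N}_{m_x}$ lies in the open interval $(T_n,s)$ for $T_n{<}s{<}T_{n+1}$, so for such $s$ the semigroup identity gives
\[
\int_{(-\infty,s)}h(s{-}x){\cal N}_{m_x}(\diff x)=\int_{(-\infty,T_n]}h(s{-}x){\cal N}_{m_x}(\diff x)=e^{-(s{-}T_n)/\alpha}\,Z_n.
\]
Substituting this into relation~\eqref{eqX} and changing variables $u{=}s{-}T_n$ turns it into $\int_0^{X_{n+1}}\Phi(e^{-u/\alpha}Z_n)\,\diff u{=}E_{n+1}$, that is, Relation~\eqref{eqX3}; the solution $X_{n+1}$ is finite and unique because $t{\mapsto}\int_0^t\Phi(e^{-u/\alpha}Z_n)\,\diff u$ is continuous, non-decreasing and diverges at infinity (here $e^{-u/\alpha}Z_n{\to}0$ and $\Phi(0){>}0$, which is Lemma~\ref{Lem1} in this setting). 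The same factorisation, applied now to $Z_{n+1}{=}\int_{(-\infty,T_{n+1}]}h(T_{n+1}{-}s){\cal N}_{m_x}(\diff s)$ by splitting off the atom at $T_{n+1}$ (contributing $h(0){=}1$) from the mass carried by $(-\infty,T_n]$ (contributing $e^{-X_{n+1}/\alpha}Z_n$ by the identity above), yields $Z_{n+1}{=}1{+}e^{-X_{n+1}/\alpha}Z_n$, i.e.\ the second identity of~\eqref{eqMark}. Since $X_{n+1}{<}{+}\infty$ a.s.\ and $Z_n{>}0$, this forces $Z_{n+1}{>}1$, so for $n{\ge}1$ the chain lives in $(1,{+}\infty)$.

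It then remains to identify the Markovian structure. The two previous paragraphs show that $X_{n+1}$, hence $Z_{n+1}$, is a deterministic measurable functional of $Z_n$ and $E_{n+1}$ alone, while $\sigma(Z_0,\dots,Z_n)\subset{\cal F}_{T_n}$ because each $Z_k$ is a function of $T_0,\dots,T_k$ and of the deterministic measure $m_x$; by Proposition~\ref{HSDEprop}, $E_{n+1}$ is independent of ${\cal F}_{T_n}$ with the exponential distribution of parameter~$1$. Hence $(Z_n)$ is a Markov chain, with transition mechanism exactly~\eqref{eqMark}--\eqref{eqX3}. I do not anticipate a genuine obstacle: the argument is the semigroup identity plus bookkeeping, the only points needing care being the open/closed endpoint conventions at $T_n$ in~\eqref{eqX}, the absence of atoms strictly between consecutive points, and the observation that writing $(Z_n)$ as a deterministic function of $(Z_0,E_1,\dots,E_n)$ is what simultaneously delivers the Markov property and the explicit kernel.
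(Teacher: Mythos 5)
Your proof is correct and follows essentially the same route as the paper: the semigroup identity for the exponential kernel is used to split off the atom at $T_{n+1}$, giving $Z_{n+1}{=}1{+}e^{-X_{n+1}/\alpha}Z_n$, and to rewrite Relation~\eqref{eqX} as $\int_0^{X_{n+1}}\Phi(e^{-s/\alpha}Z_n)\,\diff s{=}E_{n+1}$. The extra bookkeeping you supply (uniqueness of $X_{n+1}$ via Lemma~\ref{Lem1}, measurability of $Z_n$ with respect to ${\cal F}_{T_n}$, and the state space $(1,{+}\infty)$) is left implicit in the paper but is accurate.
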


\begin{proof}
\begin{align*}
  Z_{n+1} &=    \int_{(-\infty,T_{n+1}]}\exp({-}(T_{n+1}{-}s)/\alpha){\cal N}_{m_x}(\diff s)\\
    &=    1{+}\exp({-}(T_{n+1}{-}T_n)/\alpha)\int_{(-\infty,T_{n}]}\exp({-}(T_{n}{-}s)/\alpha){\cal N}_{m_x}(\diff s)\\
     &=   1{+}\exp({-}X_{n+1}/\alpha)Z_n,
\end{align*}
and Relation~\eqref{eqX0} gives that
\begin{align*}
  E_{n+1}&=\int_{T_n}^{T_{n+1}} \Phi\left( \int_{\R_-}h(s{-}u)m_x(\diff u){+}
    \sum_{k{=}1}^{n} h\left(s{-}T_k \right)\right)\,\diff s\\
  &=\int_{0}^{T_{n+1}{-}T_n} \Phi\left(e^{-s/\alpha}\left(\int_{\R_-}h(T_n{-}u)m_x(\diff u){+}
    \sum_{k{=}1}^{n} h\left(T_n{-}T_k\right) \right)\right)\,\diff s\\
  &=\int_{0}^{X_{n+1}}\Phi\left(e^{-s/\alpha}Z_n\right)\diff s 
\end{align*}
is an exponentially distributed random variable with parameter $1$ and that the sequence $(E_n)$ is i.i.d. 
\end{proof}

\begin{proposition}[Harris ergodicity]\label{ErgExp}
If $\Phi$ satisfies Condition~A-2, and if $\alpha\beta_e{<}1$, where
\begin{equation}\label{betacond}
  \beta_e\steq{def}\limsup_{u\to+\infty} \int_{u-1}^u\frac{\Phi(s)}{s}\,\diff s
\end{equation}
then  the sequence $(Z_n)$ is a Harris Markov chain on $[1,+\infty)$.
\end{proposition}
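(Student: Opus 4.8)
The plan is to reduce $(Z_n)$ to a one–dimensional Markov chain with an explicit transition kernel and then to run the standard drift/petite–set machinery for Harris recurrence. First I would perform the change of variables $v{=}e^{-s/\alpha}Z_n$ in Equation~\eqref{eqX3}: writing $W_{n+1}{=}Z_{n+1}{-}1{=}e^{-X_{n+1}/\alpha}Z_n$, Proposition~\ref{propZ} becomes $Z_{n+1}{=}1{+}W_{n+1}$ where $W_{n+1}{\in}(0,Z_n)$ is the unique solution of
\[
E_{n+1}=\alpha\int_{W_{n+1}}^{Z_n}\frac{\Phi(v)}{v}\,\diff v .
\]
The map $r{\mapsto}\alpha\int_r^{z}\Phi(v)/v\,\diff v$ is continuous, strictly decreasing, vanishes at $r{=}z$, and, since $\Phi(0){>}0$ forces $\int_{0^+}\Phi(v)/v\,\diff v{=}{+}\infty$, tends to ${+}\infty$ as $r{\downarrow}0$; hence $W_{n+1}$, and thus $Z_{n+1}$, are well defined and the transition kernel $P$ on $[1,{+}\infty)$ is a genuine Markov kernel. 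Differentiating, the law of $Z_{n+1}$ given $Z_n{=}z$ has on $(1,1{+}z)$ the density
\[
p_z(y)=\alpha\,\frac{\Phi(y{-}1)}{y{-}1}\,\exp\Bigl({-}\alpha\int_{y-1}^{z}\frac{\Phi(v)}{v}\,\diff v\Bigr),
\]
and the (weak) Feller property of $P$ follows from the continuity of $\Phi$ by dominated convergence.

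Next I would establish $\psi$–irreducibility and identify petite sets. From any state $z$ the one–step support is the whole interval $(1,1{+}z)$ on which $p_z$ is continuous and strictly positive; since in one step the chain can drop arbitrarily close to $1$ and rise by almost $1$, every point of $[1,{+}\infty)$ is accessible in finitely many steps, so the chain is Lebesgue–irreducible. Moreover, using the continuity of $\Phi$ at $0$ (and Condition~A-2), one picks a fixed interval $[1{+}a,1{+}b]\subset(1,2)$ on which $\Phi$ is bounded below; for $z\in[1,M]$ and $y\in[1{+}a,1{+}b]$ the exponential factor in $p_z(y)$ is bounded below by $\exp(-\alpha\int_a^{M}\Phi(v)/v\,\diff v){>}0$, so $P(z,\cdot)\ge\varepsilon\,\nu(\cdot)$ for a fixed probability measure $\nu$ and $\varepsilon{>}0$ independent of $z\in[1,M]$. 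Hence every compact $[1,M]$ is a small set, and by irreducibility all compact sets are petite.

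The crux, and the only place where the weak hypothesis $\alpha\beta_e{<}1$ enters, is the drift inequality outside a compact set. By the layer–cake formula, $\E[W_{n+1}\mid Z_n{=}z]=\int_0^z\P(E_{n+1}<\alpha\int_r^z\Phi(v)/v\,\diff v)\,\diff r$, which gives the exact identity
\[
\E\bigl[Z_{n+1}-Z_n\mid Z_n=z\bigr]=1-\int_0^{z}\exp\Bigl({-}\alpha\int_r^{z}\frac{\Phi(v)}{v}\,\diff v\Bigr)\,\diff r ,
\]
so, with $V(z){=}z$ (or $V(z){=}z^\gamma$, $0{<}\gamma{<}1$, if only recurrence is wanted), it suffices to show that $\int_0^{z}\exp(-\alpha\int_r^z\Phi(v)/v\,\diff v)\,\diff r\ge 1{+}\eps$ for all large $z$. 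Fixing $\beta_1$ with $\beta_e{<}\beta_1{<}1/\alpha$, the definition~\eqref{betacond} of $\beta_e$ and Condition~A-2 yield $v_0$ with $\int_{u-1}^{u}\Phi(s)/s\,\diff s\le\beta_1$ for all $u\ge v_0$; the point is then to convert this into a bound of the form $\alpha\int_r^{z}\Phi(v)/v\,\diff v\le\alpha\beta_1(z{-}r){+}C$ with $C$ independent of $z{-}r$ (rather than the crude $\alpha\beta_1\lceil z{-}r\rceil$, which would only give the suboptimal threshold $\alpha\beta_e{<}\log 2$), and to feed it into the integral to obtain $\int_0^z\!\exp(-\alpha\int_r^z\Phi/v)\,\diff r\ge (1{+}o(1))/(\alpha\beta_1){>}1$. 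Sharpening this unit–interval–average estimate so that it is lossless enough to reach the optimal condition $\alpha\beta_e{<}1$ — controlling the possibly narrow peaks of $\Phi$, for which a multi-step version $\E[Z_{n+m}{-}Z_n\mid Z_n{=}z]\le{-}1$ of the criterion or the monotonicity built into Condition~A-2 may be needed — is the main obstacle I anticipate. Once the drift inequality holds outside some $[1,M]$, the standard drift criteria for $\psi$-irreducible chains (see~\citet{hairer_convergence_2010} and the references therein) give that $(Z_n)$ is Harris recurrent on $[1,{+}\infty)$ (and positive Harris, hence ergodic, when $V(z){=}z$ is used), which is the assertion.
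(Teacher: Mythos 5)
Your reduction to a one-dimensional kernel, the explicit density $p_z$, the irreducibility argument and the identification of compact intervals as small sets are all sound and essentially match the second step of the paper's proof. The gap is exactly where you flag it: the drift condition under the sharp hypothesis $\alpha\beta_e{<}1$ is not established, and with your choice of Lyapunov function it cannot be in general. With $V(z){=}z$ you need $\int_0^{z}\exp({-}\alpha\int_r^{z}\Phi(v)v^{-1}\diff v)\diff r{>}1$ for all large $z$, but $\beta_e$ only controls unit-interval averages of $\Phi(v)/v$ through a $\limsup$: a sparse sequence of narrow peaks of $\Phi(v)/v$, each of total mass $c$, leaves $\beta_e$ essentially unchanged while multiplying the integrand near $r{=}z$ by $e^{-\alpha c}$, so the lower bound by $1$ genuinely fails, not merely in your estimates. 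The multi-step or monotonicity fixes you sketch do not obviously recover the optimal constant.

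The missing idea is the choice of Lyapunov function
\[
F(y)=\int_1^{y-1}\frac{\Phi(u)}{u}\,\diff u ,
\]
which is tailored to the exact identity $E_1{=}\alpha\int_{Z_1-1}^{z_0}\Phi(u)u^{-1}\diff u$ (your own relation). It gives, with no inequality at all,
\[
F(Z_1)-F(z_0)=\int_{z_0-1}^{z_0}\frac{\Phi(u)}{u}\,\diff u-\frac{E_1}{\alpha},
\qquad
\E_{z_0}\bigl(F(Z_1)\bigr)-F(z_0)=\int_{z_0-1}^{z_0}\frac{\Phi(u)}{u}\,\diff u-\frac{1}{\alpha}.
\]
By the very definition~\eqref{betacond}, the hypothesis $\alpha\beta_e{<}1$ is then exactly the statement that this drift is at most $\beta_e{+}\eta{-}1/\alpha{<}0$ for $z_0$ large enough, with no loss from averaging over unit intervals; this is how the paper reaches the optimal threshold. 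Replacing your $V(z){=}z$ by this $F$, and then running your small-set argument on $[1,2]$ as you (and the paper) do, closes the proof.
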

For a general introduction on Harris Markov chains, see~\citet{nummelin_general_2004}.
\begin{proof}
The proof will be in two steps.  We will first prove,  via a Lyapunov function, that the set $[1, K]$ is recurrent, for some $K{>}0$. Then, we will show that the subset $[1, 2]$ is also recurrent and {\it small}, see~\cite{nummelin_general_2004} for example. Proposition~5.10 of~\cite{nummelin_general_2004} gives then that the Markov chain $(Z_n)$ is Harris ergodic.  

We first exhibit a Lyapunov function for this Markov chain.
Equation~\eqref{eqX3} can be rewritten as, if $Z_0{=}z_0$,
\begin{equation}\label{eqaux1}
E_{1}=\int_0^{X_{1}}\phi\left(e^{-s/\alpha}z_0\right)\diff s=
\alpha\int_{z_{0}e^{- X_{1}/\alpha}}^{z_0}\frac{\Phi\left(u\right)}{u}\,\diff u=
\alpha \int_{Z_{1}-1}^{z_0}\frac{\Phi\left(u\right)}{u}\,\diff u.
\end{equation}
  Let, for $y{>}1$,
  \[
  F(y)= \int_{1}^{y-1}\frac{\Phi\left(u\right)}{u}\,\diff u.
  \]
  Relation~\eqref{eqX3} give the identity
  \[
  \int_{z_0}^{z_0\exp(-X_1/\alpha)} \frac{\Phi(u)}{u}\diff u=\frac{E_0}{\alpha},
  \]
and consequently,
\[
\E_{z_0}(F(Z_{1}){-}F(z_{0}))=
  \int_{z_0-1}^{z_0}\frac{\Phi\left(u\right)}{u}\,\diff u- \frac{1}{\alpha},
  \]
  hence there exist $\eta{>}0$ and  $K{>}0$ such that if $z_0{\ge}K$ then
  \[
\E_{z_0}(F(Z_{1}))-\E(F(z_{0}))\leq \left(\beta_e{+}\eta{-}\frac{1}{\alpha}\right)<0.
\]
The function $F$ is a Lyapunov function for the Markov chain $(Z_n)$.
The interval $[1,K]$ is a {\em recurrent set} for the Markov chain.
See Theorem~8.6 of~\citet{robert_stochastic_2003}.

By using Relation~\eqref{eqaux1}, we obtain, for $z_0{\in}[1,K]$,

\begin{multline*}
\P_{z_0}(Z_1<2)=\P\left(\frac{E_1}{\alpha}>\int_{1}^{z_0}\frac{\Phi(u)}{u}\diff u\right)
= \exp\left({-}\alpha\int_{1}^{z_0}
\frac{\Phi(u)}{u}\diff u\right)\\
\ge  \exp\left({-}\alpha\int_{1}^{K}
\frac{\Phi(u)}{u}\diff u\right)>0,
\end{multline*}
the interval $[1,2]$ is a recurrent set for the Markov chain $(Z_n)$.

For $0{<}t{\le}z_0$, the relation
\[
\P(Z_1{-}1{\le}t)=\exp\left({-}\alpha\int_t^{z_0}\frac{\Phi\left(u\right)}{u}\diff u\right)
\]
gives that the density of $Z_1{-}1$ is given by, for $z_0{\le}K$,
\[
\alpha\frac{\Phi\left(t\right)}{t}\exp\left({-}\alpha\int_t^{z_0}
\frac{\Phi\left(u\right)}{u}\diff u\right){\ge}
\alpha\frac{\Phi\left(t\right)}{t}\exp\left({-}\alpha\int_t^{K}\frac{\Phi\left(u\right)}{u}\diff u\right).
\]
There is a positive lower bound independent of $z_0{\le}K$. 
We can now use the same argument as in example of Section~4.3.3 page~98 of~\citet{meyn_markov_1993} to prove that
$[1,2]$ is a small set. The proposition is proved.
\end{proof}

\begin{definition}
For $z{>}1$ and $y{>}0$,  we define $G_\Phi(z,y)$  by the relation
\begin{equation}\label{defL}
\int_0^{G_\Phi(z,y)} \Phi\left(e^{-s/\alpha}z\right)\diff s =y. 
\end{equation}
\end{definition}

\begin{theorem}\label{ThExpErg}[Invariant distribution of $({\cal X}_n)$]
If $\alpha\beta_e{<}1$, then for any $x{\in}{\cal S}_h$, the Markov chain $({\cal X}_n)$ of
Definition~\ref{MarkDef} converges in distribution to the law of
  \[
\left(G_\Phi\left(Z_{-n}^*,\alpha\int_{Z_{-n+1}^*-1}^{Z_{-n}^*}\frac{\Phi(u)}{u}\diff u\right),n{\ge}1\right)
  \]
where $G_\Phi$ is defined by Relation~\eqref{defL} and $(Z_n^*,n{\in}\Z)$ is the stationary version of the Harris Markov
chain $(Z_n)$ of Proposition~\ref{propZ}.
\end{theorem}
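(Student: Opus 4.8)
The plan is to transfer the Harris ergodicity of the one-dimensional chain $(Z_n)$, established in Proposition~\ref{ErgExp}, into a statement about the full sequence-valued chain $({\cal X}_n)$, using the explicit bijection between the two descriptions given by Proposition~\ref{propZ}. First I would fix $x{\in}{\cal S}_h$ and set $z_0{=}\int_{(-\infty,0]}e^{s/\alpha}m_x(\diff s){\ge}1$, so that $(Z_n)$ starts at $z_0$ and, by Proposition~\ref{ErgExp} together with $\alpha\beta_e{<}1$, converges in distribution to the stationary law of the Harris chain; I would also fix the stationary two-sided version $(Z_n^*,n{\in}\Z)$. The key identity to record is that the $n$-th inter-arrival $X_n$ can be recovered from the pair $(Z_{n-1},Z_n)$: from Relation~\eqref{eqX3}, $X_n{=}G_\Phi(Z_{n-1},E_n)$, and from Relation~\eqref{eqaux1}, $E_n{=}\alpha\int_{Z_n-1}^{Z_{n-1}}\Phi(u)/u\,\diff u$, so that
\[
X_n=G_\Phi\!\left(Z_{n-1},\ \alpha\int_{Z_n-1}^{Z_{n-1}}\frac{\Phi(u)}{u}\,\diff u\right).
\]
Thus ${\cal X}_n{=}(X_n,X_{n-1},\ldots,X_1,\ldots)$ is obtained by applying a fixed measurable map $\Psi$ coordinatewise to the ``reversed-window'' sequence $(Z_{n-1},Z_{n-2}),(Z_{n-2},Z_{n-3}),\ldots$ of the $Z$-chain, where $\Psi(a,b){=}G_\Phi(a,\alpha\int_{b-1}^{a}\Phi(u)/u\,\diff u)$.

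Next I would convert the convergence in distribution of $Z_n$ (a single coordinate) into convergence in distribution of the whole finite window $(Z_n,Z_{n-1},\ldots,Z_{n-k})$ for each fixed $k$. This is standard for Harris chains: since $Z_n\Rightarrow Z^*$ where $Z^*$ has the invariant law, and the chain is time-homogeneous, the block $(Z_n,Z_{n-1},\dots,Z_{n-k})$ converges in distribution to $(Z_0^*,Z_{-1}^*,\ldots,Z_{-k}^*)$ — one argues by running the chain from time $n-k$, where $Z_{n-k}\Rightarrow Z_{-k}^*$, and then applying the $k$-step transition kernel, which is weakly continuous enough here because $(Z_n)$ is driven by the i.i.d. sequence $(E_m)$ through the continuous maps of Relations~\eqref{eqMark}–\eqref{eqX3}. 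Because the topology on ${\cal S}$ induced by the metric $d$ of Relation~\eqref{DistS} is the product topology, to show ${\cal X}_n\Rightarrow$ the claimed limit it suffices to show convergence of every finite coordinate-block $(X_n,X_{n-1},\ldots,X_{n-k+1})$. Applying the coordinatewise map $\Psi$ above and the continuous mapping theorem to the window convergence of the $Z$-chain yields
\[
(X_n,\ldots,X_{n-k+1})\ \Longrightarrow\ \left(\Psi(Z_0^*,Z_{-1}^*),\ldots,\Psi(Z_{-k+1}^*,Z_{-k}^*)\right),
\]
and since $\Psi(Z_{-n}^*,Z_{-n-1}^*){=}G_\Phi(Z_{-n}^*,\alpha\int_{Z_{-n-1}^*-1}^{Z_{-n}^*}\Phi(u)/u\,\diff u)$, after re-indexing this is exactly the sequence displayed in the theorem.

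The main obstacle, and the step that needs genuine care rather than bookkeeping, is the weak-continuity/convergence argument for the $Z$-chain's finite-dimensional windows — in other words, checking that $Z_n\Rightarrow Z^*$ upgrades to $(Z_n,\ldots,Z_{n-k})\Rightarrow(Z_0^*,\ldots,Z_{-k}^*)$. Harris ergodicity gives convergence in total variation of the law of $Z_n$ to the invariant measure, uniformly in the starting point over compacts, and the recurrence of $[1,K]$ from Proposition~\ref{ErgExp} localizes the chain; combining total-variation convergence of $Z_{n-k}$ with the fixed $k$-step kernel (which is the same measurable map of $(Z_{n-k},E_{n-k+1},\ldots,E_n)$ regardless of $n$) gives total-variation, hence weak, convergence of the whole block. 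A minor additional point is measurability/continuity of $G_\Phi$ and of the map $\Psi$: $G_\Phi(z,y)$ is well-defined and strictly increasing in $y$ by Definition~\ref{defL} since $\Phi(0){=}\Phi(0){>}0$ makes $s\mapsto\int_0^s\Phi(e^{-r/\alpha}z)\diff r$ a continuous increasing bijection of $\R_+$ onto $\R_+$, so $\Psi$ is measurable (indeed continuous where $\Phi$ is), and the continuous mapping theorem applies. Once these two points are in place, the identification of the limit is immediate from the computation above, and the convergence of $({\cal X}_n)$ in the metric $d$ follows because $d$ metrizes the product topology on ${\cal S}$.
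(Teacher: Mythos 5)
Your proposal is correct and follows essentially the same route as the paper: write each coordinate as $X_{n-j}{=}G_\Phi\bigl(Z_{n-j-1},\alpha\int_{Z_{n-j}-1}^{Z_{n-j-1}}\Phi(u)/u\,\diff u\bigr)$, a continuous function of a window of the one-dimensional chain, use Harris ergodicity to upgrade $Z_n{\Rightarrow}Z^*$ to convergence of the finite blocks $(Z_n,\ldots,Z_{n-k})$ to $(Z_0^*,\ldots,Z_{-k}^*)$, and conclude with the continuous mapping theorem in the product topology metrized by $d$; you merely make explicit the block-convergence and continuity steps the paper leaves implicit. The only blemish is an index slip in your displayed limit, where the first entry appears as $\Psi(Z_0^*,Z_{-1}^*)$ rather than $\Psi(Z_{-1}^*,Z_0^*)$ — your own identity $X_n{=}\Psi(Z_{n-1},Z_n)$ yields exactly the sequence stated in the theorem.
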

\begin{proof}
For $n{\ge}1$, with the above notation, by definition $X_{n+1}{=}G_\Phi(Z_n,E_{n+1})$, i.e.
   \[
E_{n+1}= \alpha \int_{Z_{n+1}-1}^{Z_n}\frac{\Phi\left(u\right)}{u}\,\diff u.
\]
For $n{\ge}k{\ge}1$, $(X_n,X_{n-1},\ldots,X_{n-k+1})$ are the $k$-first coordinates of ${\cal X}_n$, they can be
expressed as
\begin{multline*}
  \left(G_\Phi(Z_{n-1},E_{n}),G_\Phi(Z_{n-2},E_{n-1}),\ldots,G_\Phi(Z_{n-k},E_{n-k+1}) \right)\\
=\left(G_\Phi\left(Z_{n-1},\alpha\int_{Z_{n}-1}^{Z_{n-1}}\frac{\Phi(u)}{u}\diff u\right),\ldots
  G_\Phi\left(Z_{n-k-1},\alpha\int_{Z_{n-k}-1}^{Z_{n-k-1}}\frac{\Phi(u)}{u}\diff u\right) \right).
\end{multline*}
The Harris ergodicity of $(Z_n)$ implies that the random variable $(Z_{n},Z_{n-1},\ldots,Z_{n-k})$ is converging in
distribution to  $(Z_{0}^*,Z_{-1}^*,\ldots,Z_{-k}^*)$.
The mapping $(z,y){\mapsto}G_\Phi(z,y)$ is continuous, the continuous
mapping theorem concludes the proof of our result.
\end{proof}

Theorem~\ref{ThExpErg} shows that in the case of exponential memory,  the invariant distribution of $({\cal X}_n)$
can be expressed in terms of a one-dimensional stationary  Markov chain. The following corollary rephrases this result in terms of Hawkes processes. This is a direct application of Proposition~\ref{PalmMarkov}. 
\begin{corollary}
  If $\Phi$ is a continuous function such that $\Phi(0){>}0$ and $\alpha\beta_e{<}1$, where $\beta_e$ is defined by Relation~\eqref{betacond}, then there exists a unique stationary Hawkes process. 
\end{corollary}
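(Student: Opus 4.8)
The plan is to read off the corollary from the Markovian dictionary of Proposition~\ref{PalmMarkov} together with the two facts proved for exponential memory: the explicit limit of Theorem~\ref{ThExpErg} and the Harris ergodicity of Proposition~\ref{ErgExp}. For \emph{existence}, Theorem~\ref{ThExpErg} already produces a candidate invariant law $\Lambda$ for the kernel ${\cal K}$, namely the law of $\bigl(G_\Phi(Z_{-n}^*,\alpha\int_{Z_{-n+1}^*-1}^{Z_{-n}^*}\Phi(u)/u\,\diff u),n{\ge}1\bigr)$ built from the stationary Harris chain $(Z_n^*,n{\in}\Z)$. I would first argue that $\Lambda$ is ${\cal K}$-invariant: the sequence $(X_n^*)_{n\in\Z}$ with $X_{n+1}^*{=}{-}\alpha\log\bigl((Z_{n+1}^*{-}1)/Z_n^*\bigr)$ is a stationary (shift-commuting) deterministic functional of $(Z_n^*)$, it satisfies the defining recursion of Definition~\ref{MarkDef} by Proposition~\ref{propZ} and Relation~\eqref{eqaux1}, and hence its one-sided law is ${\cal K}$-invariant (alternatively, the Ces\`aro/continuity argument of the proof of Theorem~\ref{ExistenceTheo} applies verbatim, the one-step map $z{\mapsto}1{+}e^{-G_\Phi(z,E)/\alpha}z$ being continuous in $z$). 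I would then check that $\Lambda$ is carried by ${\cal S}_h$: since $h(u){=}e^{-u/\alpha}$, a sequence $x{=}(x_k)$ lies in ${\cal S}_h$ iff $\sum_{k\ge1}e^{-(x_1+\cdots+x_k)/\alpha}{<}{+}\infty$, and under $\Lambda$ this sum equals $Z_0^*{-}1$, which is finite almost surely because $Z_0^*{\in}[1,{+}\infty)$ by Proposition~\ref{propZ}. Proposition~\ref{PalmMarkov} then delivers a stationary Hawkes process.

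For \emph{uniqueness}, let ${\cal N}$ and ${\cal N}'$ be two stationary Hawkes processes for $(\Phi,h)$. By Proposition~\ref{PalmMarkov} (through Proposition~\ref{InterPalmTheo}) their Palm distributions are two ${\cal K}$-invariant probability measures $\Lambda,\Lambda'$ on ${\cal S}_h$, and it suffices to prove $\Lambda{=}\Lambda'$, since a stationary point process is determined by its Palm measure. To each such invariant law associate the process $(Z_n)$ of Proposition~\ref{propZ} built from the corresponding stationary ${\cal X}$-chain: the exponential form of $h$ makes $Z_0{=}1{+}\sum_{k\ge1}e^{-(X_1+\cdots+X_k)/\alpha}$ almost surely finite (this is exactly $x{\in}{\cal S}_h$), and Relation~\eqref{eqMark} then shows $(Z_n,n{\in}\Z)$ is a stationary Markov chain for the one-dimensional kernel of Proposition~\ref{propZ}, so its marginal is an invariant probability for that kernel. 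Proposition~\ref{ErgExp} says this kernel is Harris on $[1,{+}\infty)$, hence has a \emph{unique} invariant distribution; therefore the law of $(Z_n^*,n{\in}\Z)$ is the same whether we start from $\Lambda$ or $\Lambda'$. Finally, $X_{n+1}{=}G_\Phi(Z_n,E_{n+1})$ with $E_{n+1}{=}\alpha\int_{Z_{n+1}-1}^{Z_n}\Phi(u)/u\,\diff u$ (Equation~\eqref{eqaux1}) exhibits the whole sequence $(X_n)$ as one fixed deterministic, shift-commuting functional of $(Z_n^*,n{\in}\Z)$, so the finite-dimensional marginals of the ${\cal X}$-chain are pinned down and $\Lambda{=}\Lambda'$.

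The substance of the argument is not a hard estimate but the careful handling of the three correspondences ``stationary Hawkes process $\leftrightarrow$ Palm measure $\leftrightarrow$ ${\cal K}$-invariant law on ${\cal S}_h$ $\leftrightarrow$ invariant law of the $Z$-chain'', and in particular the two places where one must know the invariant law of ${\cal K}$ actually sits on ${\cal S}_h$ so that $Z_0$ is genuinely finite and the $Z$-chain is genuinely a Markov chain on $[1,{+}\infty)$ --- this is precisely where the exponential memory is indispensable, the general construction of Section~\ref{MarkSec} not guaranteeing a one-dimensional reduction. I expect that to be the only delicate point; once it is settled, existence is Theorem~\ref{ThExpErg} plus Proposition~\ref{PalmMarkov}, and uniqueness is the uniqueness of the Harris invariant measure of Proposition~\ref{ErgExp} transported back through the deterministic maps above.
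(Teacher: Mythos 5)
Your argument is correct and follows exactly the route the paper intends: the corollary is stated there as a direct consequence of Theorem~\ref{ThExpErg} (the one-dimensional Harris chain $(Z_n)$ and its unique stationary version) combined with the dictionary of Proposition~\ref{PalmMarkov} between ${\cal K}$-invariant laws on ${\cal S}_h$ and stationary Hawkes processes. Your write-up merely makes explicit the steps the paper leaves implicit --- that the limit law is ${\cal K}$-invariant and supported on ${\cal S}_h$ (equivalently $Z_0^*{<}{+}\infty$), and that uniqueness transports back from the unique Harris invariant measure through the deterministic maps $G_\Phi$ --- which is a useful, but not different, elaboration.
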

Note that the condition $\alpha\beta_e{<}1$ is weaker than the classical conditions of the literature: $\Phi$ Lipschitz with Lipschitz constant $\beta$ such that $\alpha \beta{<}1$.
\subsection*{Transient Hawkes processes}

From now on, we assume a polynomial behavior for $\Phi$ so that, for $x{>}0$, \[\Phi(x){=}(\nu{+}\beta x)^\gamma,\]
where $\nu$, $\beta$ and $\gamma$ are positive real numbers.

Theorem~\ref{ThExpErg} shows that $({\cal X}_n)$ is converging in distribution for all $\alpha$, $\nu$ and $\beta$
when $\gamma{<}1$, and, when $\gamma{=}1$, the convergence occurs if $\alpha\beta{<}1$.

\begin{proposition}\label{TransMark}
  If $\Phi(u){=}(\nu{+}\beta u)^\gamma$ and if $\gamma{>}1$ with $\beta,\,\nu{>}0$,
  then the Markov process $(Z_n)$ is transient.
\end{proposition}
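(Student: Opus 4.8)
The plan is to show that the Markov chain $(Z_n)$ on $[1,+\infty)$ drifts to $+\infty$ almost surely, i.e.\ $Z_n{\to}{+}\infty$, which is the natural transience statement here (the state space is not compact, so "transient" means the chain leaves every bounded set eventually and never returns). The key algebraic identity is the one already derived in Proposition~\ref{ErgExp}, namely
\[
\alpha\int_{Z_{n+1}-1}^{Z_n}\frac{\Phi(u)}{u}\,\diff u = E_{n+1},
\]
with $(E_n)$ i.i.d.\ exponential with parameter $1$. With $\Phi(u){=}(\nu{+}\beta u)^\gamma$ and $\gamma{>}1$, the integrand $\Phi(u)/u$ behaves like $\beta^\gamma u^{\gamma-1}$ for large $u$, so $\int_{z-1}^{z}\Phi(u)/u\,\diff u\sim \beta^\gamma z^{\gamma-1}\to{+}\infty$ as $z{\to}{+}\infty$. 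This is the super-linear growth that will force transience.

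First I would introduce the increasing antiderivative $\Psi(y){=}\int_1^y \Phi(u)/u\,\diff u$, so the recursion reads $\Psi(Z_n){-}\Psi(Z_{n+1}{-}1){=}E_{n+1}/\alpha$, i.e.\ $\Psi(Z_{n+1}{-}1){=}\Psi(Z_n){-}E_{n+1}/\alpha$. The point is that for $z$ large, $\Psi(z){-}\Psi(z{-}1)$ is large (it is $\ge c z^{\gamma-1}$ for some $c{>}0$ once $z\ge z_0$), so subtracting the $O(1)$ quantity $E_{n+1}/\alpha$ from $\Psi(Z_n)$ still leaves $\Psi(Z_{n+1}{-}1)\ge \Psi(Z_n){-}E_{n+1}/\alpha$, and one checks that this entails $Z_{n+1}\ge Z_n$ whenever $Z_n$ is large enough (specifically, once $\Psi(Z_n){-}\Psi(Z_n{-}1)>E_{n+1}/\alpha$, which happens deterministically for $Z_n$ beyond a random-but-a.s.-finite threshold only if $E_{n+1}$ is bounded — so the cleaner route is to track $\Psi(Z_n)$ directly). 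Set $W_n{=}\Psi(Z_n)$; then $W_{n+1}{=}\Psi(\Psi^{-1}(W_n{-}E_{n+1}/\alpha){+}1)$. Using $\Psi(y{+}1){-}\Psi(y){=}\int_y^{y+1}\Phi/u\,\diff u \ge \delta$ for a fixed $\delta>0$ and all $y\ge 1$ (since $\Phi(u)/u\ge \Phi(1)>0$ is bounded below), we get the clean lower bound
\[
W_{n+1}\ge W_n - \frac{E_{n+1}}{\alpha} + \delta.
\]
Hmm — but $\E(E_{n+1}/\alpha){=}1/\alpha$ may exceed $\delta$, so this crude bound is not enough by itself; I need to exploit that the "$+1$ shift gain" $\Psi(y{+}1){-}\Psi(y)$ actually grows with $y$.

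So the core step is: show there is a level $L$ and a constant $\theta>1/\alpha$ with $\Psi(y{+}1){-}\Psi(y)\ge\theta$ for all $y\ge L$ (immediate from $\Phi(u)/u\to{+}\infty$). Consequently, as long as $Z_n\ge \Psi^{-1}(L){+}1$, we have $W_{n+1}\ge W_n - E_{n+1}/\alpha + \theta$, and since $\E(\theta - E_{n+1}/\alpha){=}\theta{-}1/\alpha{>}0$, the strong law of large numbers gives $W_n\to{+}\infty$ on the event that $Z_n$ stays above that level; a standard stopping-time / excursion argument upgrades this to $W_n\to{+}\infty$ a.s.\ regardless of the starting point, because the negative drift $-1/\alpha$ can only push $W_n$ down by $O(1)$ before the chain climbs back (once below $L$ the chain is pushed up by the i.i.d.\ increments with the $+\Psi(Z_n{-}1{+}1){-}\Psi(Z_n{-}1)$ terms, and there is a uniform positive probability of exceeding $L$ in one step). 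Since $\Psi$ is a homeomorphism of $[1,\infty)$ onto $[0,\infty)$, $W_n\to{+}\infty$ is equivalent to $Z_n\to{+}\infty$, which is transience. I would also remark that this matches Theorem~\ref{TransientH}: not only is the chain transient, but in fact $Z_n\to\infty$ fast enough that $\sum_n X_{n+1}{=}\sum_n G_\Phi(Z_n,E_{n+1})<\infty$, giving the finite-time blow-up — though for the present proposition only $Z_n\to\infty$ is needed.

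The main obstacle is the non-standard drift estimate: the one-line Foster--Lyapunov inequality $W_{n+1}\ge W_n-E_{n+1}/\alpha+\delta$ has the wrong sign of mean drift if $\delta<1/\alpha$, so one must genuinely use the superlinearity of $\Phi(u)/u$ to get an arbitrarily large shift-gain $\theta$ at high levels, and then handle the return-from-below excursions carefully to conclude global (not just conditional) transience. The rest — the change of variables to $u$, the asymptotics of $\Psi$, invoking the SLLN — is routine.
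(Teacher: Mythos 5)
Your overall strategy --- a drift to infinity --- is the same as the paper's in spirit, but the implementation is genuinely different: you pass to $W_n{=}\Psi(Z_n)$ with $\Psi(y){=}\int_1^y\Phi(u)/u\,\diff u$ and compare with a random walk, whereas the paper estimates the increment of $Z_n$ itself, proving $\lim_{z_0\to\infty}\E_{z_0}(Z_1{-}z_0){=}1$ and $\sup_{z_0}\E_{z_0}\bigl((Z_1{-}z_0)^2\bigr){<}{+}\infty$ via the lower bound $\Phi(u)\ge(\beta u)^\gamma$ and the inequality $1{-}(1{-}h)^{1/\gamma}\le 2h/\gamma$, and then invokes a ready-made Lamperti-type transience criterion (Theorem~8.10 of the cited monograph of Robert). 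Your route would have the advantage of using only $\liminf_{u}\int_{u-1}^{u}\Phi(s)/s\,\diff s>1/\alpha$, the exact complement of the condition in Proposition~\ref{ErgExp}; however, as written it has two concrete gaps.

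First, the central inequality $W_{n+1}\ge W_n{-}E_{n+1}/\alpha{+}\theta$ ``as long as $Z_n$ is large'' is false. From $\Psi(Z_{n+1}{-}1){=}W_n{-}E_{n+1}/\alpha$ one gets $W_{n+1}{=}W_n{-}E_{n+1}/\alpha{+}g(Z_{n+1}{-}1)$ with $g(y){=}\int_y^{y+1}\Phi(u)/u\,\diff u$: the gain is evaluated at the \emph{landing} point $Z_{n+1}{-}1{=}e^{-X_{n+1}/\alpha}Z_n$, not at $Z_n$, and however large $Z_n$ is, a large value of $E_{n+1}$ sends $Z_{n+1}{-}1$ into the compact region where $g{<}\theta$, with positive probability. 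This is repairable: on the positive-probability event $\{\sum_{k\le n}(\theta{-}E_k/\alpha)\ge{-}c\ \text{for all } n\}$ an induction shows that if $\Psi(z_0)\ge\Psi(L){+}\theta{+}c$ then $Z_n{-}1\ge L$ for every $n$, all gains are $\ge\theta$, and $W_n\to\infty$; this yields a positive probability of never returning below a fixed level, which is the substance of transience. Second, your excursion step rests on ``a uniform positive probability of exceeding $L$ in one step,'' which is impossible since $Z_{n+1}{-}Z_n\le 1$ always; reaching $L$ from near $1$ takes at least $L{-}1$ steps. Upgrading the positive escape probability to $Z_n\to\infty$ a.s.\ requires showing that high levels are reached almost surely from every state (e.g.\ a uniform-on-compacts lower bound on $\P_z(Z_1\ge z{+}1/2)$) together with a conditional Borel--Cantelli argument; this is standard but not automatic, and it is precisely the bookkeeping the paper avoids by quoting the moment-based criterion. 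As it stands, the proposal's key drift step fails and its globalization is only asserted, so the argument is not complete, though the idea is sound and salvageable.
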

\begin{proof}
From Relation~\eqref{eqaux1}, we have
  \[
  E_{1}= \alpha \int_{Z_{1}-1}^{z_0}\frac{\Phi\left(u\right)}{u}\,\diff u,
  \]
 where $E_1$ is an exponentially distributed random variable with parameter $1$.
 Let, for $u{\ge}0$, $\Phi_p(u){\steq{def}}(\beta u)^\gamma$ and, on the event
 $\{\gamma E_1{<}\alpha \beta^\gamma z_0^\gamma\}$, we define the variable $Z_{1,p}$ such that
\[
E_{1}= \alpha \int_{Z_{1,p}-1}^{z_0}\frac{\Phi_p\left(u\right)}{u}\,\diff u=
\frac{\alpha\beta^\gamma}{\gamma}\left(z_0^\gamma{-}(Z_{1,p}{-}1)^\gamma\right),
\]
then, since $\Phi_p{\le}\Phi$,  we have $ z_0{-}Z_{1,p}{+}1{\ge}z_0{-}Z_1{+}1{\ge}0$
and  $E_2{\steq{def}}{\gamma E_1}/{(\alpha\beta^\gamma)}$, then
 \[
    z_0{-}Z_{1,p}{+}1=z_0\left(1{-}\left(1{-}\frac{E_2}{z_0^\gamma}\right)^{1/\gamma}\right).
  \]
The elementary inequality
    \[
    1{-}(1-h)^{1/\gamma}\le \frac{2}{\gamma}h, \quad 0{\le}h{\le}1/2,
    \]
    gives the relation, for $a{\ge}1$
\begin{align*}
    \E_{z_0}\left((z_0{-}Z_1{+}1)^a\right)&\leq z_0^a\P(2\gamma E_1{\ge}\alpha \beta^\gamma z_0^\gamma)
    +\E\left((z_0{-}Z_{1,p}{+}1)^a\ind{2\gamma E_1{\le}\alpha \beta^\gamma z_0^\gamma}\right)\\
    &\leq z_0^a\exp\left({-}\frac{\alpha \beta^\gamma}{2\gamma} z_0^\gamma\right)
    +\frac{2^a}{\alpha^a\beta^{a\gamma}}\frac{\E\left(E_1^a\right)}{ z_0^{a(\gamma-1)}}.
\end{align*}
Since $\gamma{>}1$, we deduce that
\begin{equation}\label{LyapF}
\lim_{z_0\to+\infty} \E_{z_0}\left(Z_1{-}z_0\right)=1
\text{ and } 
\sup_{z_0{\ge}1} \E_{z_0}\left((Z_1{-}z_0)^2\right)<{+}\infty. 
\end{equation}
Theorem~8.10 of~\citet{robert_stochastic_2003} shows that the Markov chain is transient.
Strictly speaking Theorem~8.10 is for a Markov chain with a countable state space, nevertheless a glance at the proof
of this result shows that it is also valid in our setting.
The proposition is proved.
\end{proof}
Relation~\eqref{eqX3} of Proposition~\ref{propZ} gives that the  Hawkes Point Process $(T_n)$ of
Proposition~\ref{HSDEprop} is such that
\begin{equation}\label{eqd1}
E_{n+1}{\ge}\beta^\gamma Z_n^\gamma\int_0^{T_{n+1}{-}T_n} e^{-s\gamma/\alpha}\diff s=
\frac{\alpha}{\gamma}\beta^\gamma Z_n^\gamma\left(1{-}e^{-\gamma (T_{n+1}-T_n)/\alpha}\right),
\end{equation}
where $(E_n)$ is an i.i.d. sequence of exponential random variables with parameter~$1$.
Under the assumptions of Proposition~\ref{TransMark} the sequence $(Z_n)$ is converging in distribution to infinity and,
with the last relation, the relation
\[
\lim_{n\to{+}\infty} T_{n+1}{-}T_n=0
\]
holds for the convergence in law.
This result suggests that the  points $(T_n)$ are closer and closer asymptotically.
We investigate this aspect in the rest of the section.

We now study  the asymptotic behavior of $(Z_n)$ in the transient case.
We start with a technical lemma.
\begin{lemma}\label{lemexp}
  If $\gamma{\ge}2$ then, for any $\delta{>}0$,
\begin{equation}\label{ExpEst}
  \sup_{z_0{>}1} \E_{z_0}\left(e^{\delta|Z_1{-}z_0|}\right)<{+}\infty.
\end{equation}
\end{lemma}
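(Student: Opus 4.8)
The plan is to dominate $|Z_1{-}z_0|$ by its negative part, read off the exact left tail of $Z_1$ from Relation~\eqref{eqaux1}, and then estimate the resulting integral by a dichotomy on the size of the starting point $z_0$.

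Since $Z_1{=}1{+}e^{-X_1/\alpha}z_0$ with $X_1{\in}(0,{+}\infty)$ almost surely, one has $1{<}Z_1{<}z_0{+}1$, so that $|Z_1{-}z_0|{\le}1{+}(z_0{-}Z_1)^+$ and, since $\E_{z_0}\bigl(e^{\delta|Z_1-z_0|}\bigr){\le}e^{\delta}\E_{z_0}\bigl(e^{\delta(z_0-Z_1)^+}\bigr)$, it suffices to bound $\sup_{z_0>1}\E_{z_0}\bigl(e^{\delta(z_0-Z_1)^+}\bigr)$. By Relation~\eqref{eqaux1} the random variable $E_1{=}\alpha\int_{Z_1-1}^{z_0}\Phi(u)/u\,\diff u$ is a continuous strictly decreasing function of $Z_1{-}1{\in}(0,z_0)$ with range $(0,{+}\infty)$, so that for $0{\le}t{\le}z_0{-}1$,
\[
\P_{z_0}(z_0{-}Z_1{\ge}t)=\P\left(E_1\ge\alpha\int_{z_0-t-1}^{z_0}\frac{\Phi(u)}{u}\,\diff u\right)=\exp\left({-}\alpha\int_{z_0-t-1}^{z_0}\frac{\Phi(u)}{u}\,\diff u\right),
\]
while $\P_{z_0}(z_0{-}Z_1{\ge}t){=}0$ for $t{>}z_0{-}1$ because $Z_1{>}1$. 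Consequently
\[
\E_{z_0}\bigl(e^{\delta(z_0-Z_1)^+}\bigr)=1+\delta\int_0^{z_0-1}e^{\delta t}\exp\left({-}\alpha\int_{z_0-t-1}^{z_0}\frac{\Phi(u)}{u}\,\diff u\right)\diff t.
\]

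Next I would exploit the super-linear lower bound $\Phi(u)/u{=}(\nu{+}\beta u)^\gamma/u{\ge}\beta^\gamma u^{\gamma-1}$. Since $u{\mapsto}\gamma u^{\gamma-1}$ is non-decreasing,
\[
\alpha\int_{z_0-t-1}^{z_0}\frac{\Phi(u)}{u}\,\diff u\ge\frac{\alpha\beta^\gamma}{\gamma}\bigl(z_0^\gamma{-}(z_0{-}t{-}1)^\gamma\bigr)\ge\alpha\beta^\gamma(z_0{-}t{-}1)^{\gamma-1}(t{+}1).
\]
Fix $M{=}M(\delta){>}2$ large enough that $\alpha\beta^\gamma(M/2)^{\gamma-1}{\ge}2\delta$, which is possible since $\gamma{>}1$. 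If $1{<}z_0{\le}M$, then $(z_0{-}Z_1)^+{<}z_0{-}1{\le}M{-}1$ deterministically, whence $\E_{z_0}\bigl(e^{\delta(z_0-Z_1)^+}\bigr){\le}e^{\delta(M-1)}$. If $z_0{>}M$, I would split the $t$-integral at $t_0{=}z_0/2{-}1$: on $\{0{\le}t{<}t_0\}$ one has $z_0{-}t{-}1{>}z_0/2{>}M/2$, so the exponent above is at least $\alpha\beta^\gamma(M/2)^{\gamma-1}t{\ge}2\delta t$ and the integrand is at most $e^{-\delta t}$, contributing at most $1$ to the integral; on $\{t_0{\le}t{\le}z_0{-}1\}$ one has $z_0{-}t{-}1{\le}z_0/2$, hence $z_0^\gamma{-}(z_0{-}t{-}1)^\gamma{\ge}z_0^\gamma(1{-}2^{-\gamma}){\ge}z_0^\gamma/2$, and this part of the integral is at most $(\delta z_0/2)\exp\bigl(\delta(z_0{-}1){-}\alpha\beta^\gamma z_0^\gamma/(2\gamma)\bigr)$, which is bounded uniformly in $z_0{\ge}M$ because, $\gamma$ being $>1$, the term $z_0^\gamma$ dominates $\delta z_0$ in the exponent.

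Assembling the two $z_0$-ranges bounds $\sup_{z_0>1}\E_{z_0}\bigl(e^{\delta(z_0-Z_1)^+}\bigr)$ and hence $\sup_{z_0>1}\E_{z_0}\bigl(e^{\delta|Z_1-z_0|}\bigr)$, which is the claim. The delicate point, and the only real obstacle, is that the bound has to be uniform in $z_0$ \emph{and} valid for every $\delta{>}0$: a crude estimate such as $z_0{-}Z_1{\le}2E_1/(\alpha\beta^\gamma)$, obtained by discarding the decaying factor $z_0^{1-\gamma}$, only has a finite exponential moment of order $\delta{<}\alpha\beta^\gamma/2$, so one must genuinely keep the super-linear growth of $u{\mapsto}u^\gamma$ and let the $\delta$-dependent threshold $M(\delta)$ absorb $\delta$. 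Alternatively, one may route the displacement bound through the auxiliary variable $Z_{1,p}$ of the proof of Proposition~\ref{TransMark}, where the inequality $z_0{-}Z_1{\le}2E_1/(\alpha\beta^\gamma z_0^{\gamma-1})$ on $\{2\gamma E_1{\le}\alpha\beta^\gamma z_0^\gamma\}$, together with $z_0{-}Z_1{<}z_0{<}(2\gamma E_1/(\alpha\beta^\gamma))^{1/\gamma}$ on the complement, gives the same split; in either form only $\gamma{>}1$ is used.
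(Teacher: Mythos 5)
Your proof is correct and follows the same basic route as the paper: read the left tail of $Z_1$ off Relation~\eqref{eqaux1}, lower-bound $\Phi(u)/u$ by $\beta^\gamma u^{\gamma-1}$ so that the exponent in the tail becomes $b\bigl(z_0^\gamma{-}(z_0{-}t{-}1)^\gamma\bigr)$ with $b{=}\alpha\beta^\gamma/\gamma$ (this is exactly the paper's bound~\eqref{eqaa}, up to the harmless shift by $1$ that you absorb into a factor $e^\delta$), and then split the integral of $\delta e^{\delta t}$ against this tail. The only genuine difference is where the integral is cut. The paper substitutes $t{=}uz_0$ and splits at $u^*{=}1{-}(1{-}2\delta/(bz_0^{\gamma-1}))^{1/\gamma}\sim 2\delta/(\gamma b z_0^{\gamma-1})$, bounding the near piece $[0,u^*]$ crudely by length times supremum; since $z_0u^*\sim z_0^{2-\gamma}$, this is precisely where the hypothesis $\gamma{\ge}2$ is consumed. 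You instead cut at $t{=}z_0/2$ after a preliminary dichotomy $z_0{\le}M(\delta)$ versus $z_0{>}M(\delta)$, and on the near piece you retain the pointwise estimate (exponent ${\ge}2\delta t$), which makes the integrand $e^{-\delta t}$ directly integrable with no length-times-sup step. As you observe, this organization only uses $\gamma{>}1$, so it is marginally more economical in hypotheses (it would also cover $1{<}\gamma{<}2$, where the paper's small-$u$ estimate breaks down), at the cost of carrying the explicit $\delta$-dependent threshold $M(\delta)$. Both arguments are sound; the reduction $|Z_1{-}z_0|{\le}1{+}(z_0{-}Z_1)^+$ and the exact tail identity you use are correct consequences of $Z_1{=}1{+}e^{-X_1/\alpha}z_0$ and of~\eqref{eqaux1}.
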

\begin{proof}
From Relation~\eqref{eqaux1}, we get, for  $z_0{\ge}1$ and $1{\le}t{\le}z_0$,
\begin{multline}\label{eqaa}
\P_{z_0}\left(z_0{-}Z_1{+}1{\ge}t\right)=\P\left(E_1{\ge}\alpha\int_{z_0-t}^{z_0}\frac{\Phi(u)}{u}\right)
\\ \le\exp\left({-}bz_0^\gamma\left(1{-}\left(1{-}\frac{t}{z_0}\right)^\gamma \right)\right),
\end{multline}
with $b{\steq{def}}{\alpha\beta^\gamma}/{\gamma}$.
Note that, by Definition~\eqref{eqMark} of $Z_1$, $z_0{-}Z_1{+}1{\ge}0$, hence, for $\delta{>}0$,
\begin{align*}
  \E_{z_0}\left(e^{\delta|Z_1{-}z_0{-}1|}\right){-}1&=
  \delta\int_0^{z_0} e^{\delta t} \P_{z_0}\left(z_0{-}Z_1{+}1{\ge}t\right)\,\diff t\\
  &\le \delta\int_0^{z_0}
    \exp\left(\delta t{-}bz_0^\gamma\left(1{-}\left(1{-}\frac{t}{z_0}\right)^\gamma \right)\right)\,\diff t\\
  &=\delta z_0\int_0^1 \exp\left(\delta u z_0{-}bz_0^\gamma\left(1{-}\left(1{-}u\right)^\gamma \right)\right)\,\diff u.
\end{align*}
When $z_0$ is sufficiently large, we split the integral into two terms, 
\[
z_0\int_{1-(1-2\delta/(bz_0^{\gamma-1}))^{1/\gamma}}^1
\exp\left(\delta uz_0{-}bz_0^\gamma\left(1{-}\left(1{-}u\right)^\gamma \right)\right)\,\diff u
\leq z_0\exp\left(-\delta z_0\right),
\]
and
\begin{multline*}
z_0\int_0^{1-(1-2\delta/(bz_0^{\gamma-1}))^{1/\gamma}}
\exp\left(\delta uz_0{-}bz_0^\gamma\left(1{-}\left(1{-}u\right)^\gamma \right)\right)\,\diff u
\\\leq z_0\left(1-\left(1-2\frac{\delta}{bz_0^{\gamma-1}}\right)^{1/\gamma}\right)
\exp\left(\delta z_0\left(1-(1-2\delta/(bz_0^{\gamma-1}))^{1/\gamma}\right)\right).
\end{multline*}
The lemma is proved. 
\end{proof}
\begin{proposition}\label{RatInf}
If $\gamma{\geq}2$, then,  almost surely,
  \[
  \lim_{n\to+\infty}\frac{Z_n}{n}=1.
  \]
\end{proposition}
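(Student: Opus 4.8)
The statement is a strong law of large numbers for the one--dimensional chain $(Z_n)$ of Proposition~\ref{propZ} in the transient regime, and the plan is to combine a martingale decomposition of its increments with the transience of Proposition~\ref{TransMark} and the moment controls of~\eqref{LyapF} and Lemma~\ref{lemexp}.

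\smallskip
\noindent\emph{Step 1: $Z_n{\to}{+}\infty$ almost surely.} By Relation~\eqref{eqMark}, for the chain started at $z$ one has $Z_1{-}z{=}1{-}W$ with $W{\steq{def}}(1{-}e^{-X_1/\alpha})z{\ge}0$; in particular every increment is bounded above by $1$. From~\eqref{LyapF} we get $\E_z(W){\to}0$ as $z{\to}{+}\infty$, while Lemma~\ref{lemexp} gives $\sup_{z>1}\E_z(e^{\delta W}){<}{+}\infty$ for every $\delta{>}0$; a routine uniform--integrability argument then yields $\E_z(e^{\lambda W}){\to}1$ as $z{\to}{+}\infty$, hence $\E_z(e^{-\lambda Z_1})=e^{-\lambda z}e^{-\lambda}\E_z(e^{\lambda W}){\le}e^{-\lambda z}$ for $z$ above a large threshold $K$ and $\lambda{>}0$ fixed. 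Thus $(e^{-\lambda Z_{n\wedge\tau_K}})$ is a bounded supermartingale, with $\tau_K{\steq{def}}\inf\{n{\ge}0:Z_n{\le}K\}$, and optional stopping gives $\P_z(\tau_K{<}{+}\infty){\le}e^{\lambda(K-z)}$ for $z{>}K$. On the other hand $\limsup_n Z_n{=}{+}\infty$ almost surely --- by the transience of Proposition~\ref{TransMark}, or directly since from any bounded region the chain exceeds any prescribed level, within a bounded number of steps, with probability bounded away from $0$, by the explicit transitions~\eqref{eqMark}. Setting $\sigma_j{\steq{def}}\inf\{n{\ge}0:Z_n{\ge}j\}{<}{+}\infty$ and using the strong Markov property, $\P(\exists\,n{\ge}\sigma_j:Z_n{\le}K){\le}\E(e^{\lambda(K-Z_{\sigma_j})}){\le}e^{\lambda(K-j)}$, which is summable in $j$; by the Borel--Cantelli lemma the chain stays above $K$ from some index on, almost surely. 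Since $K$ is arbitrary, $Z_n{\to}{+}\infty$ almost surely.

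\smallskip
\noindent\emph{Step 2: martingale decomposition and conclusion.} I would set $\mu(z){\steq{def}}\E_z(Z_1){-}z$ and, with $({\cal G}_n)$ the filtration generated by $(Z_0,E_1,\dots,E_n)$,
\[
M_n{\steq{def}}\sum_{k=1}^n\bigl((Z_k{-}Z_{k-1}){-}\mu(Z_{k-1})\bigr),\qquad n{\ge}0,
\]
so that $(M_n)$ is a $({\cal G}_n)$-martingale and $Z_n{=}Z_0{+}\sum_{k=1}^n\mu(Z_{k-1}){+}M_n$. By~\eqref{LyapF} the conditional variances $\E((M_k{-}M_{k-1})^2{\mid}{\cal G}_{k-1}){=}\mathrm{Var}_{Z_{k-1}}(Z_1)$ are uniformly bounded, so $\sum_k\E((M_k{-}M_{k-1})^2)/k^2{<}{+}\infty$; hence the $L^2$--bounded martingale $\sum_k(M_k{-}M_{k-1})/k$ converges almost surely and Kronecker's lemma gives $M_n/n{\to}0$ almost surely. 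Finally, \eqref{LyapF} also shows that $\mu$ is bounded on $[1,+\infty)$ with $\mu(z){\to}1$ as $z{\to}{+}\infty$; since $Z_{k-1}{\to}{+}\infty$ almost surely by Step~1, $\mu(Z_{k-1}){\to}1$, so $\tfrac1n\sum_{k=1}^n\mu(Z_{k-1}){\to}1$ by Ces\`aro averaging. Dividing the decomposition by $n$ and letting $n{\to}{+}\infty$ gives $Z_n/n{\to}1$.

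\smallskip
The main obstacle is Step~1: the transience of Proposition~\ref{TransMark} only provides $\limsup_n Z_n{=}{+}\infty$, whereas one needs the full divergence $\liminf_n Z_n{=}{+}\infty$, i.e.\ that the chain does not return to low values infinitely often. This is exactly where the exponential estimate of Lemma~\ref{lemexp} --- hence the hypothesis $\gamma{\ge}2$ --- enters, through the supermartingale $e^{-\lambda Z_n}$ above a high threshold; once $Z_n{\to}{+}\infty$ is secured, the remaining martingale law of large numbers and the Ces\`aro step are routine.
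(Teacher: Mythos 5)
Your proof is correct, and it rests on exactly the two estimates the paper itself relies on --- the drift and second-moment bounds of Relation~\eqref{LyapF} and the uniform exponential moment of Lemma~\ref{lemexp} --- but it takes a more self-contained route. The paper disposes of the lower bound $\liminf_n Z_n/n\ge 1$ by invoking Theorem~8.11 of \citet{robert_stochastic_2003}, a general law-of-large-numbers criterion for Markov chains whose drift stabilizes at a positive constant and whose increments have uniformly bounded exponential moments (a statement given there for countable state spaces, which the paper notes must be adapted), and then obtains the matching upper bound from the deterministic inequality $Z_n\le n{+}z_0$, itself a consequence of $Z_{n+1}{=}1{+}e^{-X_{n+1}/\alpha}Z_n\le 1{+}Z_n$. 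You instead unpack the content of that citation: the supermartingale $e^{-\lambda Z_{n\wedge\tau_K}}$ combined with the strong Markov property gives $Z_n\to{+}\infty$ almost surely --- the only place where $\gamma\ge 2$, i.e.\ Lemma~\ref{lemexp}, is genuinely needed, exactly as you observe --- and the martingale strong law plus Ces\`aro averaging of $\mu(Z_{k-1})\to 1$ then yields the limit $1$ directly, upper and lower bound at once, using only the second-moment bound in \eqref{LyapF}. What each approach buys: the paper's is shorter but leans on an external theorem stated in a different setting; yours is longer but fully explicit and isolates precisely which hypothesis does what. The one point to tighten is the claim $\limsup_n Z_n{=}{+}\infty$: transience as established in Proposition~\ref{TransMark} does not by itself preclude infinitely many returns to a compact set in a general state space, so you should spell out the conditional Borel--Cantelli step showing that a chain visiting $[1,M]$ infinitely often would exceed every level infinitely often; with the explicit transitions~\eqref{eqMark} (from any $z{\in}[1,M]$ the event $\{Z_1\ge z{+}1{-}\varepsilon\}$ has probability bounded below, since $\Phi\ge\nu^\gamma{>}0$ forces $X_1$ to be small with positive probability) this is routine and your argument goes through.
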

\begin{proof}
This is a consequence of Relations~\eqref{LyapF} and~\eqref{ExpEst}. Theorem~8.11
of~\citet{robert_stochastic_2003} shows that, almost surely,  the relation
\[
\liminf_{n\to+\infty} \frac{Z_n}{n}\ge 1
\]
holds.
Condition~b) of Theorem~8.11 of~\citet{robert_stochastic_2003} follows from Lemma~\ref{lemexp} and Condition~c)  of this proposition is
replaced in this context by the fact that if ${z_0}{<}K_0$, then there exists $n_0{\ge}1$ such that
$\P_{z_0}(Z_{n_0}{\ge}K_0){>}0$.

Notice that $Z_n{\le}n{+}z_0$ holds for all $n{\ge}1$, we get therefore that almost surely
\[
\lim_{n\to+\infty} \frac{Z_n}{n}=1.
\]
The proposition is proved. 
\end{proof}
For $x{\in}{\cal S}_h$ and $m_x$ defined by Relation~\eqref{mx}, if ${\cal N}_{m_x}{=}(T_n)$ the point process of
Proposition~\ref{HSDEprop}, we know that the sequence $(T_{n+1}{-}T_n)$ is converging in distribution to $0$.
The following proposition gives a much more detailed description of the accumulation of points:

For $n{\ge}1$, the point process seen from the $n$th point, i.e. the point process $(T_n{-}T_k, 1{\le}k{\le}n)$
scaled by the factor $n^\gamma$ converges in distribution to a Poisson point process.

\begin{theorem}[Asymptotic behavior of points of a transient Hawkes process] \label{TransientH}
 Assume that $\Phi(u){=}(\nu{+}\beta u)^\gamma$ with $\gamma{\ge}2$ and $\beta$, $\nu{>}0$,
 if  $x{\in}{\cal S}_h$, $m_x$ defined by Relation~\eqref{mx} and ${\cal N}_{m_x}{=}(T_n)$ the point process of
 Proposition~\ref{HSDEprop}, then the point process
 \[
 \left(n^\gamma(T_n{-}T_{k}),1{\le}k{<}n\right)
 \]
 converges in distribution  to a Poisson process with rate $\beta^\gamma$.
\end{theorem}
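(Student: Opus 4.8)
The plan is to work with the spacings $X_j{=}T_j{-}T_{j-1}$ and the auxiliary one-dimensional chain $(Z_n)$ of Proposition~\ref{propZ}, and to relabel the atoms of the target process by $i{=}n{-}k$: listed in increasing order, the atoms of $\left(n^\gamma(T_n{-}T_k),1{\le}k{<}n\right)$ are the partial sums
\[
P_i^{(n)}\steq{def}n^\gamma\sum_{r=0}^{i-1}X_{n-r}=n^\gamma(T_n{-}T_{n-i}),\qquad 1\le i\le n{-}1 .
\]
The first step is to pin down the size of one recent spacing. By Relation~\eqref{eqX3}, $E_j=\int_0^{X_j}\bigl(\nu{+}\beta e^{-s/\alpha}Z_{j-1}\bigr)^\gamma\,\diff s$; the integrand is nonincreasing on $[0,X_j]$, equal to $(\nu{+}\beta Z_{j-1})^\gamma$ at $s{=}0$ and to $(\nu{+}\beta(Z_j{-}1))^\gamma$ at $s{=}X_j$ by~\eqref{eqMark}, which gives
\[
\frac{E_j}{(\nu{+}\beta Z_{j-1})^\gamma}\le X_j\le\frac{E_j}{(\nu{+}\beta(Z_j{-}1))^\gamma}.
\]
When $Z_{j-1}\to+\infty$ this forces $X_j\to0$; then $Z_j{-}1=e^{-X_j/\alpha}Z_{j-1}=Z_{j-1}(1{+}o(1))$, so $(\nu{+}\beta(Z_j{-}1))^\gamma=(\nu{+}\beta Z_{j-1})^\gamma(1{+}o(1))$ and therefore
\[
X_j=\frac{E_j}{(\nu{+}\beta Z_{j-1})^\gamma}\,(1{+}o(1))\qquad\text{as }Z_{j-1}\to+\infty,
\]
with the $o(1)$ uniform over $\{E_j{\le}M\}$ for any fixed $M$.

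Next I would pass to the limit along the recent indices. Fix $I\ge1$. Since $(E_n)$ is i.i.d.\ and $Z_{n-I}$ is $\sigma(E_1,\dots,E_{n-I})$-measurable, the vector $(Z_{n-I},E_{n-I+1},\dots,E_n)$ has the same law as $(Z_{n-I},\mathcal E_1,\dots,\mathcal E_I)$ with $(\mathcal E_\ell)$ i.i.d.\ exponential of parameter $1$, independent of $Z_{n-I}$, and by the Markov dynamics~\eqref{eqMark}--\eqref{eqX3}, $(X_{n-I+1},\dots,X_n)$ is a deterministic function of this vector. Proposition~\ref{RatInf} gives $Z_{n-I}/n\to1$ almost surely, and applying the single-spacing estimate $I$ times (each update changes $Z$ by $1{+}o(1)$, so $Z_{n-I+\ell}/n\to1$ in probability for $\ell\le I$) yields, for convergence in distribution on $\R^I$,
\[
\left(n^\gamma X_n,\,n^\gamma X_{n-1},\dots,n^\gamma X_{n-I+1}\right)\longrightarrow\frac1{\beta^\gamma}\left(G_1,\dots,G_I\right),
\]
with $(G_\ell)$ i.i.d.\ exponential of parameter $1$. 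Taking partial sums, $\bigl(P_i^{(n)}\bigr)_{1\le i\le I}$ converges in distribution to $\bigl(\beta^{-\gamma}\sum_{\ell=1}^iG_\ell\bigr)_{1\le i\le I}$, i.e.\ to the first $I$ atoms of a Poisson process of rate $\beta^\gamma$ on $\R_+$ (partial sums of i.i.d.\ parameter-$1$ exponentials being the atoms of a rate-$1$ Poisson process, rescaled by $\beta^{-\gamma}$).

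It then remains to rule out escape of mass to infinity, so that this finite-dimensional convergence upgrades to convergence of point processes. From~\eqref{eqMark} one has $Z_{j-1}\le n{+}Z_0$ for $j\le n$, where $Z_0{=}\int e^{s/\alpha}m_x(\diff s){<}{+}\infty$ since $x{\in}{\cal S}_h$; hence the left bound above gives $n^\gamma X_j\ge E_j\,n^\gamma/(\nu{+}\beta(n{+}Z_0))^\gamma$, and since this tends to $E_j/\beta^\gamma$ uniformly in $j\le n$, for any $\eps{>}0$ there is $n_0$ with $n^\gamma X_j\ge(1{-}\eps)E_j/\beta^\gamma$ for all $1\le j\le n$, $n\ge n_0$. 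Consequently $P_i^{(n)}\ge(1{-}\eps)\beta^{-\gamma}\sum_{r=0}^{i-1}E_{n-r}$, whose law is $(1{-}\eps)\beta^{-\gamma}$ times a $\mathrm{Gamma}(i,1)$ variable, uniformly in $n\ge n_0$; thus for every $x{>}0$ the number of atoms of the relabelled process in $(0,x]$ is tight and, uniformly in $n$, only finitely many atoms lie below $x$. Combined with the finite-dimensional convergence of the ordered atoms and the identification of the limit as a rate-$\beta^\gamma$ Poisson process, this gives the stated convergence in distribution.

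The main obstacle is the first step, namely establishing $X_j\sim E_j/(\nu{+}\beta Z_{j-1})^\gamma$ with error estimates uniform enough — in particular over moderate values of $E_j$ — to survive the $I$-fold iteration conditionally on $Z_{n-I}$; once this asymptotic is in hand, everything else will follow softly from Proposition~\ref{RatInf} and the i.i.d.\ structure of $(E_n)$.
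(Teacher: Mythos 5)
Your proposal is correct and follows essentially the same route as the paper: both rest on the one-dimensional chain $(Z_n)$ of Proposition~\ref{propZ}, the almost sure convergence $Z_n/n\to1$ of Proposition~\ref{RatInf}, the asymptotic $n^\gamma X_{n-k}\approx E_{n-k}/\beta^\gamma$ yielding finite-dimensional convergence of the scaled spacings to i.i.d.\ exponentials, and a uniform lower bound $n^\gamma X_j\gtrsim E_j$ giving stochastic domination of the number of atoms in bounded sets so that the finite-dimensional limit upgrades to convergence of the point processes. The only cosmetic difference is that you obtain the one-step asymptotic by sandwiching $X_j$ between $E_j/(\nu{+}\beta Z_{j-1})^\gamma$ and $E_j/(\nu{+}\beta(Z_j{-}1))^\gamma$ via monotonicity of the integrand, where the paper instead bounds $E_{n+1}{-}\beta^\gamma Z_n^\gamma\int_0^{X_{n+1}}e^{-\gamma s/\alpha}\diff s$ on the event ${\cal E}_{n,\delta}$; both arguments are valid.
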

\begin{proof}
  Define ${\cal P}_n{=}(n^\gamma(T_n{-}T_{k}),1{\le}k{<}n)$.  Relation~\eqref{eqX3} and the representation of $(T_n)$
  in terms of the Markov chain $(Z_n)$ of Proposition~\ref{propZ}  give the identity
\begin{equation}\label{eqbb}
  E_{n+1}= \int_0^{X_{n+1}}\left(\nu{+}\beta Z_ne^{-s/\alpha}\right)^\gamma\diff s,
\end{equation}
where $(E_n)$ is an i.i.d. sequence of exponential random variables with parameter $1$ and, as before,
$X_{n+1}{=}T_{n{+}1}{-}T_n$.

For any $\delta{>}0$ and $n{\ge}1$, Relation~\eqref{eqbb} gives the inequality
\begin{equation}\label{eqc}
\P\left(X_{n+1}{\ge}\delta\right)\le\P\left(\frac{\alpha}{\gamma}\beta^\gamma
\left(1{-}e^{-\delta\gamma/\alpha}\right)Z_{n}^\gamma\le E_{n+1}\right).
\end{equation}

On the event ${\cal E}_{n, \delta}$,
\[
  {\cal E_{n, \delta}}\steq{def} \left\{ X_{n+1}\le \delta, \frac{\nu}{\beta}
  e^{\delta/\alpha}\le Z_n \right\},
\]
Relation~\eqref{eqbb} shows that
\[
\beta^\gamma e^{-\delta\gamma/\alpha} Z_n^\gamma X_{n+1}\le E_{n+1},
\]
and therefore that the sequence of random variables $(Z_n^\gamma X_{n+1})$ is therefore tight.

The elementary relation
\[
(1{+}h)^{\gamma}-1\le C_1h, \quad  0{\le}h{\le}1,
\]
with $C_1{=}\gamma2^{\gamma{-}1}$ and Relation~\eqref{eqbb} give the following inequality 
\begin{multline*}
0\le E_{n+1} 
-  \beta^\gamma Z_n^\gamma\int_0^{X_{n+1}}e^{-s\gamma/\alpha}\diff s\\
\leq C_1\nu\left(\beta Z_n\right)^{\gamma-1} \int_0^{X_{n+1}}e^{-(\gamma-1)s/\alpha}\diff s
\le C_1\nu\beta^{\gamma-1}X_{n+1} Z_n^{\gamma-1},
\end{multline*}
on the event ${\cal E}_{n,\delta}$.
Using the fact that $(Z_n^\gamma X_{n+1})$ is tight and the almost sure convergence of $(Z_n/n)$ to
$1$ of Proposition~\ref{RatInf} shows that the random variables $(X_{n+1} Z_n^{\gamma-1})$ is converging in distribution to
$0$. For a sufficiently large $n$, Relation~\eqref{eqc} shows that the probability of the event ${\cal E}_{n, \delta}$ is arbitrarily close to $1$,  we obtain that the sequence of random variables 
\[
(Y_n)\steq{def} \left(\beta^\gamma Z_n^\gamma\int_0^{X_{n+1}}e^{-s\gamma/\alpha}\diff s\right)
\]
is converging in law to an exponential distribution with parameter $1$.

On the event  ${\cal E}_{n, \delta}$, we have
\[
Y_n\le \beta^\gamma Z_n^\gamma X_{n+1}\le e^{\delta\gamma/\alpha}Y_n,
\]
hence, for $x{\ge}0$,
\[
\P(\beta^\gamma Z_n^\gamma X_{n+1}\ge x)\leq
\P\left(e^{\delta\gamma/\alpha}Y_n\ge x\right){+}\P\left({\cal E}_{n, \delta}^c\right),
\]
hence
\[
\limsup_{n\to+\infty}\P\left(\beta^\gamma Z_n^\gamma X_{n+1}\ge x\right)\leq \exp\left(-x e^{-\delta\gamma/\alpha} \right),
\]
and, by letting $\delta$ go to $0$, we obtain
\[
\limsup_{n\to+\infty}\P\left(\beta^\gamma Z_n^\gamma X_{n+1}\ge x\right)\leq e^{-x}.
\]
An analogue lower bound holds for the $\liminf$, hence 
$(\beta^\gamma Z_n^\gamma X_{n+1})$ is converging in distribution to an exponential distribution with parameter $1$. By using  the almost sure convergence of $(Z_n/n)$ to $1$, the same property holds for $(\beta^\gamma n^\gamma X_{n+1})$.

For $p{\ge}1$, the same argument may be used to prove that the sequence of random variables
$(\beta^\gamma n^\gamma X_{n-k},0{\le}k{\le}p{-}1)$ is converging in distribution to the product of $p$  exponential distributions
with parameter $1$.
The key argument is again the almost sure convergence of $(Z_n/n)$ to $1$.

Final step.
Let $f$ be a continuous function on $\R_+$ with compact support included in $[0,K_0]$ for some $K_0{>}0$.
The relations $Z_n{\le}n{+}z_0$ and~\eqref{eqbb} give the inequality, for all $n{\ge}1$,
\[
E_{n+1}\le  \left(\nu{+}\beta(n{+}z_0)\right)^\gamma X_{n+1}\le C_0(n{+}1)^\gamma X_{n+1},
\]
for some constant $C_0$  independent of $n$.

For  $p{\le}n$, we thus have
\[
\P( n^\gamma(T_n{-}T_{n-p})\le K_0)\leq \P\left(\sum_{k=n-p+1}^n k^\gamma X_k\le K_0\right)
\le \P\left(\frac{1}{C_0}\sum_{k=1}^p E_k\le K_0 \right).
\]
We denote ${\cal N}_{\beta^{\gamma}}$ a Poisson process with rate $\beta^{\gamma}$, with associated exponential
random variables $(E^{\beta^{\gamma}}_n)$.

The last inequality gives that ${\cal P}_n$ is stochastically dominated by
${\cal N}_{C_0{+}\beta^{\gamma}}$, a Poisson process with rate $C_0{+}\beta^{\gamma}$, in the sense that
\begin{equation}\label{eqc4}
\P({\cal P}_n((0,K_0])\ge p)\le \P({\cal N}_{C_0{+}\beta^{\gamma}}((0,K_0])\ge p), \text{ for $p{\le}n$.}
\end{equation}
Clearly ${\cal N}_{\beta^{\gamma}}$ is also  stochastically dominated by ${\cal N}_{C_0{+}\beta^{\gamma}}$.
 \begin{multline*}
  \left|\E\left(\exp\left({-}\int f(u){\cal P}_n(u)\right)\right){-}
  \E\left(\exp\left({-}\int f(u){\cal N}_{\beta^\gamma}(u)\right)\right)\right|\\
\le \left|\E\left(\exp\left({-}\hspace{-3mm}\sum_{k=n-p+1}^n f(n^\gamma(T_n{-}T_{n-k}))\right)\right){-}
 \E\left(\exp\left({-}\hspace{-3mm}\sum_{k=n-p+1}^n f\left(E^{\beta^{\gamma}}_k\right)\right)\right)\right|\\
 +2  \P({\cal N}_{C_0{+}\beta^{\gamma}}((0,K_0]){\ge}p){+}2\P({\cal N}_{\beta^{\gamma}}((0,K_0]){\ge} p),
 \end{multline*}
 with Relation~\eqref{eqc4}.
By using the convergence in distribution of the random variables $(\beta^\gamma n^\gamma X_{n-k},0{\le}k{\le}p{-}1)$, the first term of the right-hand side of this inequality can  be made arbitrarily small for $n$ sufficiently large. We can take an integer $p$ independent of $n$  sufficiently large such that the last two terms can be made arbitrarily small.

We have thus  proved that
   \[
   \lim_{n\to+\infty}\E\left(\exp\left({-}\int_{\R_+} f(u){\cal P}_n(u)\right)\right)=
   \E\left(\exp\left({-}\int_{\R_+} f(u){\cal N}_{\beta^\gamma}(u)\right)\right)
   \]
  holds for all continuous functions with compact support.
We can use Theorem~3.2.6 of~\citet{dawson_measure-valued_1993} to conclude the proof of the proposition.
\end{proof}

\printbibliography 

\appendix

\section{General results on point processes}
\label{AppPP}

\subsection*{Point Processes}

We   recall the notations on point measures and the associated random variables, the point processes used throughout
this paper. See~\citet{neveu_processus_1977}, Chapter~1 and~11 of~\citet{robert_stochastic_2003},
and~\citet{dawson_measure-valued_1993} for a general introduction on random measures.

\subsection*{Stationary point processes}

If $m$  is a simple point measure on $\R$, the points of $m$ are enumerated by an increasing sequence $(t_k(m),k{\in}\Z)$,
numbered so that the relations
\begin{equation}\label{eqt1}
t_{-1}(m)< t_0(m)\le 0 <t_1(m)<{\cdots}
\end{equation}
hold, with the convention that $t_k(m){=}{+}\infty$ if there are less than $k{\ge}1$ points of $m$ in $\R_+$, and similarly
on $\R_-$.
The flow of translation operators $(\theta_t)$ on ${\cal M}_p(\R)$ is defined by,  for $t{\in}\R$ and $m{\in}{\cal M}_p(\R)$,
\begin{equation}\label{thetat}
\int_{\R} f(s)\theta_t(m)(\diff s) \steq{def} \int_{\R} f(s{-}t)m(\diff s),
\end{equation}
for any non-negative Borelian function $f$ on $\R$.  

A distribution on ${\cal M}_p(H)$, an element of the set  ${\cal P}({\cal M}_p(H))$, is defined as
{\em a point process} on $H$.

\begin{definition}[Stationarity]
\label{defiStat}
A point process ${\cal N}$ on $\R$ is {\em stationary} with intensity $\lambda{>}0$,  if the random variable ${\cal N}([0,1])$
is integrable and $\E({\cal N}([0,1])){=}\lambda$, and if  its distribution is invariant by translation,
i.e.\ for $t{\in}\R$, $\theta_t({\cal N}){\steq{dist}}{\cal N}$, where  $\theta_t$ is the translation operator defined by
Relation~\eqref{thetat}.
\end{definition}

\subsection*{Palm space of point processes}
The set ${\cal M}_p^0(\R)$ is a subset of elements $m$ of ${\cal M}_p(R)$ such that $m(0){\ne}0$. 
If $m{\in}{\cal M}_p^0(\R)$ then $m$ can be represented either by the non-decreasing sequence $(t_k, k{\in}\Z)$ of its
points, or by the sequence $$x{=}(x_k){=}(t_{-k}{-}t_{-k-1}, k{\ge}0)$$ of  increments between them, with the convention that $t_0{=}0$.

An operator $\widehat{\theta}$ on ${\cal M}_p^0(\R)$ is defined  by, 
\begin{equation}\label{thetaH}
\widehat{\theta}(m){\steq{def}}\theta_{t_1(m)}(m)\ind{t_1(m){<}{+}\infty},
\end{equation}
for $m{\in}{\cal M}_p^0(\R)$, where $t_1(m)$ and $(\theta_t)$ by defined respectively by Relations~\eqref{eqt1}
and~\eqref{thetat}.
A simple point process of ${\cal M}_p^0(\R)$ can be identified to its sequence of inter-arrival times and it is easily
seen that the relation
\[
\left(\rule{0mm}{4mm}(t_{k+1}{-}t_k)(\widehat{\theta}(m)),k{\in}\Z\right)=
\left(\rule{0mm}{4mm}(t_{k+2}{-}t_{k+1})(m),k{\in}\Z\right),
\]
holds.

The mapping $\widehat{\theta}$  is the classical shift operator on sequences. 
If $m{=}(t_n,n{\in}\Z)$ is in ${\cal M}_p^0(\R)$ and $x{=}(x_n){=}(t_n{-}t_{n-1},n{\in}\Z)$, then $m{=}m_{x}$ and $\widehat{\theta}(m){=}m_{\bar{x}}$, with $\bar{x}{=}(x_{n+1})$  if  $m(\R_-){=}m(\R_+){=}{+}\infty$.

\subsection*{Equivalence between stationary point processes and Palm measure}
We now recall some classical results on  stationary point processes on $\R$. 
A stationary simple point process with intensity $\lambda{>}0$ can be equivalently defined by either by
\begin{enumerate}
\item a distribution $Q$ on ${\cal M}_p(\R)$ which is invariant for the continuous flow of translations $(\theta_t)$;
\item a distribution $\widehat{Q}$ on ${\cal M}_p^0(\R)$ called the {\em Palm measure} of $Q$  which is invariant for
the operator $\widehat{\theta}$.
\end{enumerate}
When a) is given, a distribution  $\widehat{Q}$ on ${\cal M}_p^0(\R)$ is constructed via Mecke's Formula so that
property b) holds. See  Chapter~II of \citet{neveu_processus_1977} or Proposition~11.6
of~\citet{robert_stochastic_2003}.

If b) holds, i.e. if $\widehat{Q}$ is given,  then the construction of $Q$ is done with a fundamental construction of
ergodic theory, it is the {\em special flow} associated to the operator $\widehat{\theta}$ and the function $m{\mapsto} t_1(m)$ on
${\cal M}_p^0$.
See Chapter~11 of \citet{cornfeld_ergodic_1982}, and Chapter~10 of \citet{robert_stochastic_2003}.
The distribution $Q$ is expressed as
\begin{equation}\label{Palm}
\int_{{\cal M}_p(\R)} F(m)\,Q(\diff m) =
\lambda \int_{{\cal M}_p^0(\R)} \int_0^{t_1(m)} F(\theta_s(m))\,\diff s \,\widehat{Q}(\diff m),
\end{equation}
for any non-negative Borelian function on ${\cal M}_p(\R)$. 
The probability distribution $\widehat{Q}$ is simply determined by a distribution of the sequence of inter-arrival times
which is invariant by the shift operator.
The space $({\cal M}_p^0(\R),\widehat{\theta},\widehat{Q})$  can be seen as a probability space whose elements are positive
sequences.
It is sometimes called the Palm space of $Q$.

\section{Hawkes processes: a quick review}
\label{App-Rev}
Hawkes processes have been introduced by Hawkes in 1974 in~\citet{hawkes_spectra_1971} as a class of point processes ${\cal N}$, whose stochastic intensity $(\lambda(t))$  depends on previous jumps, i.e through,
\[
\left(\lambda(t)\right) = \left(\Phi\left(\int_{(-\infty, t)}h(t{-}s){\cal N}(\diff s)\right)\right). 
\]
The first Hawkes processes investigated  were restricted to affine activation function~$\Phi$ of the form,
\[
    \Phi(x) = \nu{+}\beta x,
\]
which have a nice representation in terms of age-dependent branching processes, see ~\citet{lewis_branching_1964, vere-jones_stochastic_1970, daley2003pointprocessesI}.  The condition of  existence and uniqueness of stationary Hawkes process in this case is 
\[
  \beta\int_0^{+\infty}h(s)\diff s < 1,
\]
see~\citet{hawkes_cluster_1974}.

The special case where $\nu{=}0$ was investigated in~\citet{bremaud_hawkes_2001}, where a particular interest was dedicated to the critical Hawkes process,
\[
  \beta\int_0^{+\infty}h(s)\diff s = 1.
\]
The same critical Hawkes process, with general immigration rate~$\nu$ is investigated in~\citet{kirchner_note_2017}.
A more precise study of the  statistics of stationary Hawkes processes is developed in~\citet{jovanovic_cumulants_2015}
using a Poisson cluster process representation. The addition of  an external jump process to the linearized Hawkes process,  in view of applications in neuroscience example, has
been considered in~\citet{boumezoued_population_2016}.

\begin{table}[ht!]
\renewcommand{\arraystretch}{1.5}
\scriptsize
  \begin{center}
  \begin{tabular}{>{\centering}m{2in}>{\centering}m{2.5in}}
        \toprule
         \multicolumn{2}{c}{Branching Processes}
     \tabularnewline
        \toprule
        {\bf References}
        &
        \citet{hawkes_cluster_1974,lewis_branching_1964,vere-jones_stochastic_1970}.
    \tabularnewline
        \midrule
        {\bf Assumptions~1}
        \begin{enumerate}[leftmargin=0.2in]
        \item $\Phi(x) = \nu {+} \beta x$ with $\nu>0$ and $\beta\geq 0$,
        \item $h(x) = \exp({-}x/\alpha)$,
        \end{enumerate}
        &
        \citet{oakes_markovian_1975, errais_affine_2010}.
    \tabularnewline
        {\bf Assumptions~2}
        \begin{enumerate}[leftmargin=0.2in]
        \item $\Phi(x) = \nu {+} \beta x$ with $\nu{>}0$ and $\beta{\geq} 0$,
        \item $h {:} \mathbb{R}_+{\rightarrow}\mathbb{R}_+ $,
        \end{enumerate}
        &
        \citet{hawkes_spectra_1971, hawkes_cluster_1974, bremaud_hawkes_2001, jovanovic_cumulants_2015, boumezoued_population_2016, kirchner_note_2017}.
        \tabularnewline
        \toprule
         \multicolumn{2}{c}{Analytical Methods}
    \tabularnewline
        \toprule
        {\bf References}
        &
        \citet{kerstan_teilprozesse_1964}
    \tabularnewline
        \midrule
        {\bf Assumptions~3}
        \begin{enumerate}[leftmargin=0.2in]
        \item $\Phi : \mathbb{R}{\rightarrow}\mathbb{R}_+$ Lipschitz,
        \item $h : \mathbb{R}_+{\rightarrow}\mathbb{R}$ general,
        \end{enumerate}
        &
        \citet{bremaud_stability_1996, chen_multivariate_2017,karabash_stability_2012}.

    \tabularnewline
        \toprule
         \multicolumn{2}{c}{Renewal Properties/Markov Processes}
    \tabularnewline
        \toprule
        {\bf General References}
        &
        \citet{lindvall_lectures_2002, nummelin_general_2004, hairer_convergence_2010}.
    \tabularnewline
        \midrule
        {\bf Assumptions~3}
        \begin{enumerate}[leftmargin=0.2in]
        \item $\Phi {:} \mathbb{R}{\rightarrow}\mathbb{R}_+$ general,
        \item $h {:} \mathbb{R}_+{\rightarrow}\mathbb{R}$ general,
        \end{enumerate}
        &
        \citet{bremaud_stability_1996,hodara_systemes_2016, raad_age_2018, graham_regenerative_2019, raad_renewal_2019, costa_renewal_2020}.
    \tabularnewline
        {\bf Assumptions~4}
            \begin{enumerate}[leftmargin=0.2in]
        \item $\Phi {:} \mathbb{R}{\rightarrow}\mathbb{R}_+$ general,
        \item $h(x) = \exp({-}x/\alpha)$,
        \end{enumerate}
        &
        \citet{duarte_stability_2016}.
    \tabularnewline
    \bottomrule
    \end{tabular}
  \end{center}
    \caption{Existence of stationary Hawkes processes}
  \label{tab:table2}
\end{table}
Hawkes processes with exponential functions $h(x){=}\exp({-}x/\alpha)$ have attracted a particular interest
because the associated counting process has the Markovian property, see~\citet{oakes_markovian_1975}.

Having an affine activation function~$\Phi$ is very helpful because of the corresponding  branching process representation,
However, when studying auto-inhibiting point processes, non-linear activation functions $\Phi$ are natural candidates to use. In this setting 
 the investigation of sufficient conditions for the existence of stationary versions is more delicate.
Most proofs in this domain are based on the functional relation defining
stationary Hawkes processes that can be expressed as a fixed point equation which can be solved through a Picard scheme,
see~\citet{kerstan_teilprozesse_1964} for one of the pioneering papers on this subject. 
\citet{bremaud_stability_1996} has developed this approach when $\Phi$ is unbounded and supposed to
be $\beta$-Lipschitz, with the following condition,
\[
    \beta \int_0^{+\infty}|h(s)|\diff s < 1.
\]
Thinning techniques have been applied to the case of a bounded Lipschitz function $\Phi$ in the same reference,
where the condition
\[
    \int_0^{+\infty}s|h(s)|\diff s < {+}\infty
\]
is sufficient to prove the existence and uniqueness of the stationary version of the Hawkes process.


In~\citet{bremaud_stability_1996}, renewal theory is used to investigate Hawkes processes, with finite memory. 
This approach has been  extended  in~\citet{graham_regenerative_2019, raad_renewal_2019, costa_renewal_2020}.
It is also possible to limit the influence of intensity rate to the last jump of the Hawkes process as
done in~\citet{hodara_hawkes_2014}.A recent study~\citet{raad_age_2018} has added a refractory effect to prevent explosion in the study of non-linear Hawkes
processes.

To go further, it is interesting to consider  Hawkes processes as Markov process in a general state space, either
using counting processes~\citet{duarte_stability_2016} or Markov theory in the state of \cadlag functions~\citet{karabash_stability_2012}.
Coupling methods~\citet{lindvall_lectures_2002} and general Markov theory~\citet{nummelin_general_2004, hairer_convergence_2010} are natural tools in this setting. 

\end{document}